\documentclass[12pt]{amsart}
\usepackage{amsmath, palatino}
\usepackage{amsthm}
\usepackage{enumerate}
\usepackage{amssymb}
\usepackage[utf8]{inputenc}
\usepackage[all,cmtip]{xy}

\title[Completions and the Cuntz semigroup]{\textbf{Completions of monoids with applications to the Cuntz semigroup}}
\author{Ramon Antoine}
\author{Joan Bosa}
\author{Francesc Perera}
\address{Departament de Matem\`atiques\\ Universitat Aut\`onoma de Barcelona\\ 08193 Bellaterra\\ Barcelona, Spain.}
\email{ramon@mat.uab.cat, jbosa@mat.uab.cat, perera@mat.uab.cat} 

\date{\today}

\hfuzz=3pt \vfuzz=3pt

\addtolength{\textwidth}{4cm} \addtolength{\oddsidemargin}{-2cm}
\addtolength{\evensidemargin}{-2cm} \textheight=22.15truecm

\theoremstyle{plain}
\newtheorem{lemma}{Lemma}[section]
\newtheorem{theorem}[lemma]{Theorem}
\newtheorem{corollary}[lemma]{Corollary}
\newtheorem{proposition}[lemma]{Proposition}
\newtheorem{definition}[lemma]{Definition}
\newtheorem*{proposition*}{Proposition}
\newtheorem*{definition*}{Definition}

\newtheorem{examples}[lemma]{Examples}

\newtheorem{remark}[lemma]{Remark}

\newcommand{\W}{\mathrm{W}}
\newcommand{\V}{\mathrm{V}}
\newcommand{\T}{\mathrm{T}}
\newcommand{\N}{\mathbb{N}}
\newcommand{\Q}{\mathbb{Q}}
\newcommand{\R}{\mathbb{R} }
\newcommand{\C}{\mathbb{C}}

\newcommand{\Cu}{\textrm{Cu}}

\begin{document}

\begin{abstract}
We provide an abstract categorical framework that relates the Cuntz semigroups of the C$^*$-algebras $A$ and $A\otimes \mathcal{K}$. This is done through a certain completion of ordered monoids by adding suprema of countable ascending sequences. Our construction is rather explicit and we show it is functorial and unique up to isomorphism. This approach is used in some applications to compute the stabilized Cuntz semigroup of certain C$^*$-algebras.
\end{abstract}

\maketitle
\section*{Introduction}

Given a C$^*$-algebra $A$, the structure of the Cuntz semigroup $\W(A)$, introduced by J. Cuntz in 1978 (\cite{cu}), has been intensively studied in recent years notably in relation to the classification program of nuclear C$^*$-algebras. On the one hand because it provides a serious obstruction to the original Elliott Conjecture (see Toms \cite{tomsann}) but also, on the other hand, because its structure contains large amounts of information coming from the Elliott Invariant, and can, in many cases, be recovered from it (see for example \cite{bt}, \cite{bpt} and \cite{dad.toms}). For more details concerning the Cuntz semigroup we refer the reader to \cite{blac,kiroramer,rorfunct}.

Coward, Elliott and Ivanescu propose in \cite{CEI} a modified version of the Cuntz semigroup, $\textrm{Cu}(A)$. They use suitable equivalence classes of countably generated Hilbert modules (which, in the case of stable rank one, amount to isomorphism) to obtain a semigroup strongly related to the classical Cuntz semigroup; it is in fact isomorphic to $\W(A\otimes\mathcal{K})$. The advantage of their approach is that they further provide a category Cu for this new semigroup, consisting of positively ordered abelian semigroups with some additional properties of a topological nature. Mainly, $\textrm{Cu}(A)$ is closed under suprema of increasing sequences.

In this paper we enlarge this abstract setting to embrace both $\W(A)$ and $\textrm{Cu}(A)$. In Section~\ref{sec:precu} we build a category of ordered abelian monoids, PreCu, 
to which the original Cuntz semigroup belongs for a large class of C$^*$-algebras. This category contains Cu as a full subcategory, and differs from it in that monoids need not be closed under suprema of ascending sequences. We subsequently define, in Section~\ref{sec:complecio}, the completion of a monoid in PreCu, in terms of universal properties. Such an object is proved to exist in Section \ref{sec:existence} by providing an explicit construction. The completion obtained is thus unique and gives us a functor from the category PreCu to Cu which is a left adjoint of the identity. This, in the particular case of the Cuntz semigroup, allows us to describe $\textrm{Cu}(A)$  as a completion of $\W(A)$.
This approach is proved to be useful in computing certain Cuntz semigroups as we see in Section~\ref{sec:applications} where we recover results by Brown and Toms \cite{bt}
in which the stabilized Cuntz semigroup of certain classes of simple, unital, exact and separable C$^*$-algebras is described in terms of K-theory and traces. 
Another categorical property proved in \cite{CEI} for Cu is that it admits  countable inductive limits in such a way that the functor Cu$(-)$
from the category of C$^*$-algebras (where arbitrary inductive limits always exist) to the category Cu is (countably) continuous. 
This is not true for  the Cuntz semigroup in its classical form. The most basic counterexample is the inductive sequence of C$^*$-algebras defining the compact operators ${\mathcal K}=\lim_{\rightarrow} M_{n}(\C)$. At the level of semigroups, this induces the sequence $\N\stackrel{\mathrm{id}}{\to}\N\stackrel{\mathrm{id}}{\to}\N\to\cdots$, whose limit is not $\W(\mathcal{K})=\N\cup\{\infty\}$. 
It is natural, however, to expect continuity of the functor $\W(-)$ \emph{after} completion, in the case PreCu admits inductive limits.
As we will see in Section~\ref{sec:limits}, those do not always exist in PreCu, but this situation can be remedied by defining a smaller category $\mathcal{C}$ sitting between Cu and PreCu, to which $\W(A)$ still belongs for a large class of C$^*$-algebras. For this category, we prove that inductive limits always exist and also that we do have continuity of $\W(-)$ after completion. This, together with the preceding results, can be applied to compute the stabilized Cuntz semigroup of some direct limits of C$^*$-algebras.

\section{Notation and preliminaries}

Throughout, $M$ will denote a commutative monoid, written additively, with neutral element $0$. We shall also assume that $M$ is equipped with a partial order $\leq$ (compatible with addition) such that $x\geq 0$ for any $x$ in $M$. In particular $\leq$ will extend the algebraic order, that is, if $x+z=y$, then $x\leq y$.

All maps between monoids will be additive ordered maps that preserve $0$. Recall that a monoid map $\varphi\colon M\to N$ is an \emph{order-embedding} if $a\leq b$ whenever $\varphi(a)\leq\varphi(b)$.

Given an increasing sequence $(y_n)$ in $M$, an element $y$ is a \emph{supremum} of $(y_n)$ provided it is a least upper bound. If they exist, suprema of increasing sequences are unique. We shall denote, as is customary,  $\sup y_n$ the supremum of the increasing sequence $(y_n)$. Since our considerations might involve different monoids, we will when necessary use $\sup_M$ to mean that the supremum is computed in the monoid $M$.

\begin{definition} 
Let $x$, $y$ be elements in $M$. We write $x\ll y$ if, whenever $(y_n)$ is an increasing sequence in $M$ whose supremum exists in $M$ and $y\leq\sup(y_{n})$, then $x\leq y_{k}$ for some $k$. If $x\ll y$, we shall say that $x$ is \emph{way below} $y$. A sequence $(x_n)$ in $M$ such that $x_n\ll x_{n+1}$ for all $n$ will be called \emph{rapidly increasing}, and an element $x\in M$ such that $x\ll x$ will be called \emph{compact}. 
\end{definition}

Observe that if $x\leq y$ and $y\ll z$, then $x\ll z$. Likewise, if $x\ll y$ and $y\leq z$, then $x\ll z$.

The previous definition was given by D. Scott in \cite{scott} for general posets and was first used in connection with the Cuntz subequivalence of positive elements in C$^*$-algebras
in \cite{CEI}. We briefly recall the definitions.

\begin{definition}
{\rm cf. \cite{cu}}
Let $A$ be a C$^*$-algebra, and let $a$, $b\in A_+$. We say that $a$ is Cuntz subequivalent to $b$, in symbols $a\precsim b$, if there is a sequence $(v_n)$ in $A$ such that $a=\lim_nv_nbv_n^*$. We say that $a$ is Cuntz equivalent to $b$ if both $a\precsim b$ and $b\precsim a$ occur. Upon extending this relation to $M_{\infty}(A)_+$, one obtains an abelian semigroup $\W(A)=M_{\infty}(A)_+/\!\sim$. Denoting classes by $\langle a\rangle$, the operation and order are given by
\[
\langle a\rangle+\langle b\rangle=\langle \left(\begin{smallmatrix} a & 0 \\ 0 & b\end{smallmatrix}\right)\rangle=\langle a\oplus b\rangle\,,\,\,
\langle a\rangle\leq \langle b\rangle \text{ if } a\precsim b\,. 
\]
The semigroup $\W(A)$ is referred to as the \emph{Cuntz semigroup}.
\end{definition}

For a compact convex set $K$, we shall use $\mathrm{LAff}(K)^{++}$ to refer to those affine, lower semicontinuous functions defined on $K$, with values on $\mathbb{R}^{++}\cup\{\infty\}$. (Here, $\mathbb{R}^{++}$ stands for the strictly positive real numbers.) Note that this is a partially ordered semigroup with the usual pointwise addition and order. We shall denote by $\mathrm{LAff}_b(K)^{++}$ the subsemigroup of $\mathrm{LAff}(K)^{++}$ consisting of those functions that are \emph{bounded}. We shall also use $\mathrm{Aff}(K)^{++}$ to refer to the subsemigroup of $\mathrm{LAff}_b(K)^{++}$, whose elements are those affine, continuous and strictly positive functions defined on $K$.

\section{The category PreCu}\label{sec:precu}

We start by definining a category of monoids that will center our attention. It is modelled after the category Cu, introduced by Coward, Elliott and Ivanescu in \cite{CEI}, as an abstract setting where the Cuntz semigroup of a stable C$^*$-algebra naturally belongs to. The difference between our definition and theirs is that we do not require all increasing sequences in our monoids to have suprema.

\begin{definition}
\label{def:precu}
Let PreCu be the category defined as follows. Objects of PreCu will be partially ordered abelian monoids $M$ satisfying the properties below:
\begin{enumerate}[{\rm (i)}]
\item Every element in $M$ is a supremum of a rapidly increasing sequence.
\item The relation $\ll$ and suprema are compatible with addition.
\end{enumerate}
Maps of PreCu are monoid maps preserving
\begin{enumerate}[{\rm (i)}]
\item suprema of increasing sequences (when they exist), and
\item the relation $\ll$.
\end{enumerate}
\end{definition}

%\begin{examples}{\rm
%To check whether or not a given ordered abelian monoid is in PreCu, it is useful to first identify the way below relation. Therefore, it is clear that any discrete orderd abelian monoid of positive elements is in PreCu, since ascending chains are always stationary  (Is that what discrete means?),
%and hence $x\ll y$ amounts to $x\leq y$.
%
%Hence, any finite ordered abelian  monoid is an object of PreCu. 
%
%$\N$ and with the natural order and addition is an object of PreCu. Other clear examples are $\Q^+$ or $\R^+$ where now $x\ll y$ means $x<y$. 
%
%Given a compact convex set $K$, the previously defined set $\mathrm{LAff}(K)^{++}$  is an object of PreCu. (is this clear?)}
%\end{examples}

In view of Definition \ref{def:precu}, we may define the category Cu as follows:
\begin{definition}
\label{def:cu} {\rm (see \cite{CEI})}
Let Cu be the full subcategory of PreCu whose objects are those partially ordered abelian monoids (in PreCu) for which every increasing sequence has a supremum.
\end{definition}

As proved in \cite{CEI}, for any C$^*$-algebra $A$, the Cuntz semigroup $\W(A\otimes\mathcal{K})$ is an object of Cu. It is a natural question to ask whether the Cuntz semigroup $\W(A)$ of a C$^*$-algebra $A$ always belongs to the category PreCu. 

For a C$^*$-algebra $A$, we shall denote by $\iota\colon A\to A\otimes \mathcal{K}$ the natural inclusion defined by $\iota(a)=a\otimes e_{11}$. This map induces a map at the level of Cuntz semigroups, that we shall also denote by $\iota$, which is an order-embedding. To see this, we identify $A$ with its image inside $A\otimes \mathcal{K}$ and suppose that $\iota(\langle a\rangle)\leq\iota(\langle b\rangle)$. Then $a\precsim b$ in $(A\otimes \mathcal{K})_{+}$, and so $a\precsim b^3$,   
i.e. $a= \lim v_{n}b^3v^{*}_{n}$
for a sequence $(v_{n})$ in $A\otimes\mathcal{K}$. Right and left multiplication by $a$ implies that $a^3= \lim (av_{n}b)b(bv^{*}_{n}a)$ where now $av_{n}b$ belongs to $A$ since the latter is a hereditary C$^*$-subalgebra of $A\otimes\mathcal{K}$. Therefore $\langle a^{3}\rangle
\leq\langle b\rangle $ in $\W(A)$, and so $\langle a\rangle \leq
\langle b\rangle$ in $\W(A)$.

We have that any $a\in M_{\infty}(A)_+$ is the limit, in norm, of the increasing sequence $(a-1/n)_+$, so that indeed $\langle a\rangle =\sup\langle (a-1/n)_+\rangle$. But, while $\langle (a-\epsilon)_+\rangle\ll \langle a\rangle$ in $\W(A\otimes\mathcal K)$, it is not obvious this is the case anymore in $\W(A)$. We have the following
\begin{lemma}
\label{lem:waybelowvssuprema}
Let $A$ be a C$^*$-algebra. The following conditions are equivalent:
\begin{enumerate}[{\rm (i)}]
\item $\langle(a-\epsilon)_+ \rangle\ll\langle a\rangle$ for any $\epsilon>0$ and any $a\in M_{\infty}(A)_+$.
\item $\sup_{\W(A)}x_n=\sup_{\W(A\otimes\mathcal K)}x_n$ whenever $(x_n)$ is an increasing sequence in $\W(A)$ with supremum in $\W(A)$.
\end{enumerate}
\end{lemma}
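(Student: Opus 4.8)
The plan is to prove the two implications by carefully exploiting the three ingredients already in place: that $\iota\colon\W(A)\to\W(A\otimes\mathcal K)$ is an order-embedding, that $\W(A\otimes\mathcal K)$ is an object of Cu (so every increasing sequence has a supremum there, and $\langle(a-\epsilon)_+\rangle\ll\langle a\rangle$ holds in $\W(A\otimes\mathcal K)$ for every $\epsilon>0$), and that the norm-limit argument gives $\langle a\rangle=\sup_{\W(A\otimes\mathcal K)}\langle(a-1/n)_+\rangle$. Throughout I identify $\W(A)$ with its image in $\W(A\otimes\mathcal K)$ and keep explicit track of the ambient monoid in which each relation is asserted, since the whole content of the statement is the comparison of suprema and of $\ll$ between these two monoids.

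For (i) $\Rightarrow$ (ii), let $(x_n)$ be increasing in $\W(A)$ with $x=\sup_{\W(A)}x_n$, and write $x=\langle a\rangle$. Because $\iota$ is order-preserving, $x$ is already an upper bound of $(x_n)$ in $\W(A\otimes\mathcal K)$, so it suffices to show it is the least one. Let $y\in\W(A\otimes\mathcal K)$ be any upper bound. For each $\epsilon>0$, hypothesis (i) gives $\langle(a-\epsilon)_+\rangle\ll\langle a\rangle=\sup_{\W(A)}x_n$, so the definition of $\ll$ \emph{in} $\W(A)$ yields some $k$ with $\langle(a-\epsilon)_+\rangle\le x_k$ in $\W(A)$, hence in $\W(A\otimes\mathcal K)$, hence $\langle(a-\epsilon)_+\rangle\le y$ there. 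Taking $\epsilon=1/n$ and using $\langle a\rangle=\sup_{\W(A\otimes\mathcal K)}\langle(a-1/n)_+\rangle$ forces $x=\langle a\rangle\le y$, so $x=\sup_{\W(A\otimes\mathcal K)}x_n$.

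For (ii) $\Rightarrow$ (i), I verify the defining property of $\ll$ in $\W(A)$ directly. Let $(y_n)$ be increasing in $\W(A)$ with supremum in $\W(A)$ and $\langle a\rangle\le\sup_{\W(A)}y_n$. By hypothesis (ii) this $\W(A)$-supremum equals $s:=\sup_{\W(A\otimes\mathcal K)}y_n$, so $\langle a\rangle\le s$ in $\W(A\otimes\mathcal K)$. Since $\langle(a-\epsilon)_+\rangle\ll\langle a\rangle$ holds in $\W(A\otimes\mathcal K)$, the definition of $\ll$ \emph{there} produces some $k$ with $\langle(a-\epsilon)_+\rangle\le y_k$ in $\W(A\otimes\mathcal K)$; as both classes lie in $\W(A)$ and $\iota$ is an order-embedding, this inequality descends to $\W(A)$, establishing $\langle(a-\epsilon)_+\rangle\ll\langle a\rangle$ in $\W(A)$.

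The only genuine subtlety, and the place where the embedding property is indispensable, is the direction in which inequalities travel between the two semigroups. That $\iota$ is order-preserving is enough to make a $\W(A)$-supremum an upper bound in $\W(A\otimes\mathcal K)$ and to pass inequalities among elements of $\W(A)$ upward; but the final step of (ii) $\Rightarrow$ (i), where an inequality proved in $\W(A\otimes\mathcal K)$ must be pulled back to $\W(A)$, uses precisely that $\iota$ \emph{reflects} the order. I expect the main care to be purely in the bookkeeping: ensuring that each appeal to the definition of $\ll$ is made against a sequence whose supremum genuinely exists in the intended monoid, so that the logical quantifier in that definition is being discharged in the right place.
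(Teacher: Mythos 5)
Your proof is correct and follows essentially the same route as the paper's: both directions hinge on the same three facts (the order-embedding $\iota$, the relation $\langle(a-\epsilon)_+\rangle\ll\langle a\rangle$ in $\W(A\otimes\mathcal K)$ from \cite{CEI}, and $\langle a\rangle=\sup_{\W(A\otimes\mathcal K)}\langle(a-1/n)_+\rangle$), applied in the same order. The only cosmetic difference is that in (i) $\Rightarrow$ (ii) you compare against an arbitrary upper bound rather than directly against the supremum in $\W(A\otimes\mathcal K)$, which changes nothing.
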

\begin{proof}
(i) $\Rightarrow$ (ii):  Let $(x_n)$ be an increasing sequence in $\W(A)$ with $x=\sup_{\W(A)}x_n$ and $y=\sup_{\W(A\otimes \mathcal K)}x_n$. Clearly $y\leq x$. If we write now $x=\langle a\rangle$ for some $a\in M_{\infty}(A)_+$, our assumption implies that, for any $n$, there exists $m$ with $\langle (a-1/n)_+\rangle\leq x_m$. Thus $\langle (a-1/n)_+\rangle\leq y$ for every $n$, whence $x=\langle a\rangle\leq y$.

(ii) $\Rightarrow$ (i): Let $a\in M_{\infty}(A)_+$, and suppose that $x_n$ is an increasing sequence in $\W(A)$ with supremum in $\W(A)$. The assumption implies that this agrees with the supremum in $\W(A\otimes\mathcal K)$. If $\langle a\rangle\leq \sup x_n$, then as $\langle(a-\epsilon)_+ \rangle\ll\langle a\rangle$ in $\W(A\otimes\mathcal K)$ (by the results in \cite{CEI}), we have $\langle(a-\epsilon)_+ \rangle\leq x_n$ for some $n$ in $\W(A\otimes\mathcal K)$, hence also in $\W(A)$.
\end{proof}

%There are a number of instances where it does, although the answer in the general situation remains open.

\begin{definition}
\label{dfn:hereditary}
Let $M$ and $N$ be partially ordered monoids. An order-embedding $f\colon M\to N$ will be called \emph{hereditary} if, whenever  $x\in N$ and $y\in f(M)$ satisfy $x\leq y$, it follows that $x\in f(M)$.
\end{definition}

\begin{lemma}\label{lem:hereditary}
Let $N$ be an object of Cu and $M$ be a partially ordered monoid. Let $f\colon M\to N$ be a hereditary map. Then, for any increasing sequence $(x_{n})$ in $M$ with supremum $x$ in $M$, we have $f(x)=\sup(f(x_{n}))$
\end{lemma}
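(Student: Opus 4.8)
The plan is to compare the element $s := \sup_N(f(x_n))$, which exists because $N$ is an object of Cu, with $f(x)$, and to show they coincide. First I would note that $(f(x_n))$ is genuinely an increasing sequence in $N$: since $f$ is a monoid map and in particular order-preserving, $x_n\leq x_{n+1}$ gives $f(x_n)\leq f(x_{n+1})$. Hence $s$ is well-defined.

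One inequality is immediate. Because $x=\sup_M x_n$ forces $x_n\leq x$ for every $n$, and $f$ preserves the order, we get $f(x_n)\leq f(x)$ for all $n$; thus $f(x)$ is an upper bound of the sequence $(f(x_n))$, and therefore $s\leq f(x)$.

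For the reverse inequality the key idea is that heredity lets us pull $s$ back into the image of $f$. Indeed, we have just seen that $s\leq f(x)$ with $f(x)\in f(M)$, so by Definition~\ref{dfn:hereditary} there is some $y\in M$ with $f(y)=s$. Now $f(x_n)\leq s=f(y)$ for all $n$, and since $f$ is an order-embedding this yields $x_n\leq y$ in $M$. Thus $y$ is an upper bound of $(x_n)$ in $M$, and as $x$ is its least upper bound we conclude $x\leq y$. Applying $f$ gives $f(x)\leq f(y)=s$.

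Combining the two inequalities yields $f(x)=s=\sup(f(x_n))$, as desired. I expect the only real subtlety—and the crux of the argument—to be the invocation of heredity to ensure that the supremum $s$, a priori merely an element of $N$, actually lies in $f(M)$; once that is secured, the order-embedding property transfers the inequality back to $M$ and no topological or $\ll$-approximation argument is needed. It is worth emphasizing that hereditarily need not make $f$ itself supremum-preserving in general, but the conclusion does follow whenever the relevant supremum already exists in $M$, which is exactly the hypothesis here.
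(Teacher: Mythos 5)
Your proof is correct and follows essentially the same route as the paper's: both arguments establish $\sup(f(x_n))\leq f(x)$ from monotonicity, use heredity to write $\sup(f(x_n))=f(y)$ for some $y\in M$, and then use the order-embedding property to compare $y$ with $x$ and conclude equality. The only cosmetic difference is that the paper deduces $y=x$ directly from $x_n\leq y\leq x$, whereas you derive $x\leq y$ and close the loop by applying $f$; these are interchangeable.
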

\begin{proof}
Since $x_{n}\leq x$ for all $n$, it follows that $f(x_{n})\leq f(x)$ for all $n$. Therefore 
$\sup(f(x_{n}))\leq f(x)$. Our assumption on $f$ now implies that $\sup(f(x_{n}))=f(y)$  for some $y\in M$. Using that $f$ is an order-embedding we obtain that $x_n\leq y\leq x$ for all $n$, so $y=x$.
\end{proof}

\begin{definition}
Let $A$ be a C$^*$-algebra. We will say that $\W(A)$ is \emph{hereditary} if the map $\iota\colon \W(A)\to \W(A\otimes\mathcal{K})$ is hereditary.
\end{definition}

\begin{lemma}
\label{lem:hervssup}
Let $A$ be a C$^*$-algebra such that $\W(A)$ is hereditary. Then, given $a\in M_{\infty}(A)_+$ and $\epsilon>0$, we have $\langle (a-\epsilon)_+\rangle\ll\langle a\rangle$ in $\W(A)$. 
\end{lemma}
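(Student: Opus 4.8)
The plan is to deduce this directly from the two preceding lemmas together with the fact, recalled above from \cite{CEI}, that $\langle(a-\epsilon)_+\rangle\ll\langle a\rangle$ already holds in $\W(A\otimes\mathcal{K})$. First I would note that the hypothesis ``$\W(A)$ is hereditary'' says precisely that the order-embedding $\iota\colon\W(A)\to\W(A\otimes\mathcal{K})$ is a hereditary map, and that $\W(A\otimes\mathcal{K})$ is an object of Cu by \cite{CEI}. These are exactly the hypotheses required to invoke Lemma \ref{lem:hereditary}.

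Next I would apply Lemma \ref{lem:hereditary} with $f=\iota$, $M=\W(A)$ and $N=\W(A\otimes\mathcal{K})$. It yields that for every increasing sequence $(x_n)$ in $\W(A)$ admitting a supremum $x$ in $\W(A)$, one has $\iota(x)=\sup\iota(x_n)$, the supremum on the right being computed in $\W(A\otimes\mathcal{K})$. Under the identification of $\W(A)$ with its image $\iota(\W(A))$ inside $\W(A\otimes\mathcal{K})$, this is exactly condition (ii) of Lemma \ref{lem:waybelowvssuprema}: the supremum of $(x_n)$ computed in $\W(A)$ agrees with the one computed in $\W(A\otimes\mathcal{K})$.

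Finally, invoking the implication (ii) $\Rightarrow$ (i) of Lemma \ref{lem:waybelowvssuprema} delivers condition (i), namely $\langle(a-\epsilon)_+\rangle\ll\langle a\rangle$ in $\W(A)$ for every $a\in M_\infty(A)_+$ and every $\epsilon>0$, which is the assertion to be proved.

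The only point demanding a moment's care — and the nearest thing to an obstacle — is matching the conclusion of Lemma \ref{lem:hereditary} to condition (ii) of Lemma \ref{lem:waybelowvssuprema}. Once the identification of $\W(A)$ with $\iota(\W(A))$ is made explicit, the equality of suprema is immediate and no genuine computation is required; the whole content lies in recognizing that heredity of $\iota$ forces suprema to be computed identically in the two semigroups, which is precisely the bridge supplied by Lemma \ref{lem:hereditary}.
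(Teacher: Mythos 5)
Your argument is correct and coincides with the paper's own proof: both apply Lemma \ref{lem:hereditary} to the hereditary order-embedding $\iota\colon\W(A)\to\W(A\otimes\mathcal{K})$ to identify suprema computed in the two semigroups, and then conclude via the implication (ii) $\Rightarrow$ (i) of Lemma \ref{lem:waybelowvssuprema}. Your write-up simply makes the intermediate identification more explicit than the paper does.
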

\begin{proof}
Since $\W(A\otimes\mathcal K)$ is an object of Cu, and $\W(A)$ is hereditary, we may apply Lemma \ref{lem:hereditary} to conclude that suprema of increasing sequences in $\W(A)$ agree, when they exist, with suprema in $\W(A\otimes\mathcal K)$. The conclusion now follows from Lemma \ref{lem:waybelowvssuprema}.
\end{proof}

\begin{proposition}
\label{prop:wainprecu}
Let $A$ be a C$^*$-algebra such that $\W(A)$ is hereditary. Then $\W(A)$ is an object of PreCu and the map $\iota\colon\W(A)\to\W(A\otimes\mathcal{K})$ is an order-embedding in PreCu.
\end{proposition}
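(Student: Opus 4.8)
The plan is to verify the two axioms of Definition~\ref{def:precu} for $\W(A)$ and then to check that $\iota$ is a morphism of PreCu which is moreover an order-embedding. Throughout I identify $\W(A)$ with its image under $\iota$ inside $\W(A\otimes\mathcal{K})$. Two facts, already available, will be the engine of the argument: by Lemma~\ref{lem:hereditary} applied to the hereditary map $\iota$ (with $\W(A\otimes\mathcal{K})$ an object of Cu), suprema of increasing sequences computed in $\W(A)$ agree with those computed in $\W(A\otimes\mathcal{K})$; and by Lemma~\ref{lem:hervssup} we have $\langle(a-\epsilon)_+\rangle\ll\langle a\rangle$ in $\W(A)$ for every $a\in M_\infty(A)_+$ and $\epsilon>0$.

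For axiom (i), I would show that each $\langle a\rangle$ is the supremum of the rapidly increasing sequence $\bigl(\langle(a-1/n)_+\rangle\bigr)_n$. That $\langle a\rangle=\sup_n\langle(a-1/n)_+\rangle$ in $\W(A)$ is recorded in the discussion preceding Lemma~\ref{lem:waybelowvssuprema}. To see the sequence is rapidly increasing, I would use the functional-calculus identity $\bigl((a-1/(n+1))_+-\delta\bigr)_+=(a-1/n)_+$ with $\delta=1/n-1/(n+1)>0$, and apply Lemma~\ref{lem:hervssup} to $b=(a-1/(n+1))_+$ to obtain $\langle(a-1/n)_+\rangle\ll\langle(a-1/(n+1))_+\rangle$ in $\W(A)$.

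The core of axiom (ii) is the claim that, for $x,y\in\W(A)$, one has $x\ll y$ in $\W(A)$ if and only if $x\ll y$ in $\W(A\otimes\mathcal{K})$. The implication from $\W(A\otimes\mathcal{K})$ to $\W(A)$ is immediate, since every increasing sequence in $\W(A)$ with supremum in $\W(A)$ is also such a sequence in $\W(A\otimes\mathcal{K})$ with the same supremum, so testing $\ll$ in $\W(A)$ involves fewer sequences. For the reverse implication I would write $y=\langle a\rangle=\sup_n\langle(a-1/n)_+\rangle$ in $\W(A)$; then $x\ll y$ in $\W(A)$ yields $x\leq\langle(a-1/n)_+\rangle$ for some $n$, and since $\langle(a-1/n)_+\rangle\ll\langle a\rangle=y$ holds in $\W(A\otimes\mathcal{K})$ by \cite{CEI}, the transitivity remark ``$x\leq z$ and $z\ll w$ imply $x\ll w$'' gives $x\ll y$ in $\W(A\otimes\mathcal{K})$. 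Granting this coincidence, compatibility of $\ll$ with addition transfers directly from $\W(A\otimes\mathcal{K})$: if $x_1\ll y_1$ and $x_2\ll y_2$ in $\W(A)$, the same relations hold in $\W(A\otimes\mathcal{K})$, whence $x_1+x_2\ll y_1+y_2$ there, and since $x_1+x_2,y_1+y_2\in\W(A)$ the coincidence returns the relation to $\W(A)$. Compatibility of suprema with addition transfers the same way: for increasing $(x_n),(y_n)$ with suprema $x,y$ in $\W(A)$, the supremum of $(x_n+y_n)$ equals $x+y$ in $\W(A\otimes\mathcal{K})$; as $x+y\in\W(A)$ and any upper bound of $(x_n+y_n)$ in $\W(A)$ is also one in $\W(A\otimes\mathcal{K})$, this $x+y$ is the supremum in $\W(A)$ as well.

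Finally, $\iota$ is already known to be an order-embedding in the classical sense, it preserves suprema of increasing sequences by Lemma~\ref{lem:hereditary}, and it preserves $\ll$ because $x\ll y$ in $\W(A)$ entails $x\ll y$ in $\W(A\otimes\mathcal{K})$; hence it is an order-embedding in PreCu. I expect the main obstacle to be precisely the coincidence of the two way-below relations, and specifically its nontrivial direction, the passage from $\ll$ in $\W(A)$ to $\ll$ in $\W(A\otimes\mathcal{K})$: the hypotheses only permit testing the former against sequences living in $\W(A)$, so the argument must route through the canonical rapidly increasing approximation $\langle(a-1/n)_+\rangle$ together with the way-below relations already established in the stabilization.
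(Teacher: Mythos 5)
Your proposal is correct and follows essentially the same route as the paper: both arguments run on the canonical rapidly increasing approximations $\langle(a-1/n)_+\rangle$ together with Lemmas~\ref{lem:hereditary}, \ref{lem:waybelowvssuprema} and \ref{lem:hervssup}, and your final step showing $\iota$ preserves $\ll$ is exactly the paper's. The only organizational difference is minor: for compatibility of $\ll$ with addition you first prove the coincidence of the two way-below relations and transfer the property from $\W(A\otimes\mathcal{K})$, whereas the paper argues directly via $(a-1/n)_+\oplus(b-1/n)_+\sim(a\oplus b-1/n)_+$; both are valid.
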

\begin{proof}
By Lemma \ref{lem:hereditary}, coupled with Lemma \ref{lem:waybelowvssuprema}, we have that every element $\langle a\rangle$ in $\W(A)$ is the supremum of the rapidly increasing sequence $\langle (a-1/n)_+\rangle$.

Assume now that $x\ll y$ and $z\ll t$ in $\W(A)$, and write $y=\langle a\rangle$ and $t=\langle b\rangle$. It follows that there is $n$ with $x\leq \langle (a-1/n)_+\rangle$ and  $z\leq \langle (b-1/n)_+\rangle$. Therefore, since 
\[
(a-1/n)_+\oplus (b-1/n)_+\sim (a\oplus b-1/n)_+\,,
\]
we get 
\[
x+z\leq \langle (a\oplus b-1/n)_+\rangle\ll \langle a\rangle+\langle b\rangle\,.
\]

Finally, suppose that $(x_n)$ and $(y_n)$ are increasing sequences in $\W(A)$ with suprema $x$ and $y$ respectively. We are to show that $x+y$ is the supremum of $(x_n+y_n)$. Using that $\W(A)$ is hereditary, one sees that $(x_n+y_n)$ also has a supremum in $\W(A)$. And using that suprema and addition are compatible in $\W(A\otimes\mathcal{K})$, we see, using Lemma \ref{lem:waybelowvssuprema}, that
\[
\sup_{\W(A)}x_n+\sup_{\W(A)}y_n=\sup_{\W(A\otimes\mathcal{K})}x_n+\sup_{\W(A\otimes\mathcal{K})}y_n=\sup_{\W(A\otimes\mathcal{K})}(x_n+y_n)=\sup_{\W(A)}(x_n+y_n)\,.
\]
We have already observed that $\iota$ is an order-embedding. Combining again Lemma \ref{lem:hereditary} and Lemma \ref{lem:waybelowvssuprema}, we see that it preserves suprema of increasing sequences.

Suppose now that $\langle a\rangle\ll\langle b\rangle$. Since $\langle b\rangle =\sup(\langle
  (b-\varepsilon)_{+}\rangle)$ in $\W(A)$, there exists $\varepsilon >0$ such that $\langle a\rangle \leq \langle (b-\varepsilon)_{+}\rangle$. Applying $\iota$,
\[
\iota(\langle a\rangle)\leq\iota(\langle (b-\varepsilon)_{+}\rangle)=\langle( b-\varepsilon)_{+})\rangle\ll\langle b\rangle=\iota(\langle b\rangle)\text{ whence }\iota(\langle a\rangle)\ll\iota(\langle b\rangle)\,,
\]
so that $\iota$ is a map in PreCu.
\end{proof}

\begin{remark}\label{rem:Anohereditaria} There are examples of C$^*$-algebras $A$ for which $\W(A)$ is not hereditary (\cite{betal}).
The Cuntz semigroup of these algebras contains bounded ascending sequences with no least upper bound. This is needed for $\W(A)$ to be hereditary (see Proposition~\ref{prop:heriffC}),
but not for $\W(A)$ to be in PreCu.
\end{remark}

\begin{lemma}
\label{lem:sr1her}
Let $A$ be a C$^*$-algebra with $\mathrm{sr}(A)=1$. Then $\W(A)$ is hereditary. 
\end{lemma}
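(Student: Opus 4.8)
The plan is to verify Definition \ref{dfn:hereditary} directly for the order-embedding $\iota\colon\W(A)\to\W(A\otimes\mathcal{K})$. So I would start by assuming $\langle c\rangle\in\W(A\otimes\mathcal{K})$ satisfies $\langle c\rangle\leq\iota(\langle a\rangle)$ for some $a\in M_\infty(A)_+$, and aim to produce $b\in M_\infty(A)_+$ with $\langle c\rangle=\iota(\langle b\rangle)$. Using the identifications $M_\infty(A\otimes\mathcal{K})\cong A\otimes\mathcal{K}$ and $M_\infty(A)\subseteq A\otimes\mathcal{K}$, I may take $c\in(A\otimes\mathcal{K})_+$ and $a\in M_k(A)_+$ for some $k$, so that $c\precsim a$. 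The first observation is that the hereditary subalgebra generated by $a$ is ``small'': since $M_k(A)$ is hereditary in $A\otimes\mathcal{K}$, one has $D:=\overline{a(A\otimes\mathcal{K})a}=\overline{aM_k(A)a}\subseteq M_k(A)$. As $\mathrm{sr}(A)=1$ forces $\mathrm{sr}(M_k(A))=1$, and stable rank one is inherited by hereditary subalgebras, the algebra $D$ again has stable rank one. Thus all the comparison data live inside an algebra of stable rank one, even though $A\otimes\mathcal{K}$ itself need not.

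The membership of the \emph{cut-downs} $\langle(c-\epsilon)_+\rangle$ in the image of $\iota$ requires no hypothesis at all. From $c\precsim a$ and the standard lemma of R\o rdam, for each $\epsilon>0$ there is $d\in A\otimes\mathcal{K}$ with $(c-\epsilon)_+=dad^*$; setting $x=da^{1/2}$ gives $xx^*=(c-\epsilon)_+$ and $x^*x=a^{1/2}d^*da^{1/2}\in D\subseteq M_\infty(A)_+$. Since $xx^*$ and $x^*x$ are always Cuntz equivalent, we get $\langle(c-\epsilon)_+\rangle=\langle x^*x\rangle\in\iota(\W(A))$. Because $\iota$ is an order-embedding, writing $\langle(c-1/n)_+\rangle=\iota(\beta_n)$ with $\beta_n\in\W(A)$ yields an increasing sequence $(\beta_n)$ in $\W(A)$, and in $\W(A\otimes\mathcal{K})$ we have $\sup_n\iota(\beta_n)=\sup_n\langle(c-1/n)_+\rangle=\langle c\rangle$.

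I expect the final step to be the main obstacle, and it is precisely here that stable rank one is indispensable. The point is that $\iota(\W(A))$ need not be closed under suprema, so knowing each cut-down lies in the image does \emph{not} on its own place $\langle c\rangle$ there; what is genuinely needed is a single $b\in M_\infty(A)_+$ with $b\sim c$, i.e.\ an exact representative rather than the approximate ones produced above. To obtain it I would pass to the Hilbert module (equivalently, hereditary subalgebra) description of \cite{CEI}, which under $\mathrm{sr}(A)=1$ identifies $\W(A\otimes\mathcal{K})$ with countably generated Hilbert modules and turns $c\precsim a$ into an inclusion of $H_c$ as a submodule of $H_a$. Since $a\in M_k(A)$, the cancellation available in stable rank one lets one realize this submodule by an element of $M_\infty(A)_+$, producing $b$ with $\overline{c(A\otimes\mathcal{K})c}$ spatially isomorphic to a hereditary subalgebra of $D$ and $\langle c\rangle=\langle b\rangle=\iota(\langle b\rangle)$, as required.

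In summary, the elementary manipulations of the second paragraph already trivialize the approximate statement and show that stable rank one enters only through the passage from the cut-downs $\langle(c-\epsilon)_+\rangle$ to an honest representative of $\langle c\rangle$ in $M_\infty(A)_+$; isolating and justifying that passage via the stable rank one comparison theory of \cite{CEI} is the delicate part of the argument, whereas the reduction to a hereditary subalgebra $D$ with $\mathrm{sr}(D)=1$ is routine.
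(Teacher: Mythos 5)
Your proposal is correct in substance but follows a genuinely different path from the paper's proof. The paper stays entirely inside the positive-element picture: it approximates the given $a\in(A\otimes\mathcal K)_+$ by $a_n\in M_\infty(A)_+$, applies the Kirchberg--R\o rdam lemma to write $(a-1/n)_+=d_na_nd_n^*\sim a_n^{1/2}d_n^*d_na_n^{1/2}=:b_n\in M_\infty(A)_+$ (the analogue of your second paragraph), and then lets stable rank one enter through \cite[Lemma 4.3]{bpt}: bounded increasing sequences in $\W(A)$ have suprema, the bound here being supplied by the dominating element of $M_\infty(A)_+$. The resulting supremum $\langle c\rangle$ is then shown to satisfy $c\sim a$ by a further cut-down argument from \cite{bpt}. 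You instead route the final step through \cite{CEI}, and that does work: their stable-rank-one comparison theorem says $[X]\le[Y]$ exactly when $X$ is isomorphic to a sub-Hilbert-module of $Y$, so $E_c\cong F\subseteq E_a\subseteq A^k$, and $F$, being countably generated, equals $\overline{hA^k}$ for a strictly positive element $h$ of $\mathcal K(F)\subseteq\mathcal K(A^k)=M_k(A)$, which gives the desired representative in $M_\infty(A)_+$. Two remarks: that last realization step uses only countable generation, not ``cancellation'' --- the entire stable rank one hypothesis is consumed by the CEI embedding theorem --- and once that theorem is invoked your second paragraph becomes logically superfluous. What the paper's route buys is that it isolates the property actually responsible for heredity, namely the existence of suprema of bounded increasing sequences in $\W(A)$, which is exactly what is later abstracted into the category $\mathcal C$ and Proposition~\ref{prop:heriffC}; your route is shorter but leans on a stronger black box.
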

\begin{proof}
Let $a\in A\otimes\mathcal{K}_+$ and $b\in M_{\infty}(A)_+$, and assume that $a\lesssim b$.  We are to show that there exists $c\in M_{\infty}(A)_+$ such that $c\sim a$. 

Using that $A\otimes\mathcal{K}$ is the completion of $M_{\infty}(A)$, we obtain that if $a\in (A\otimes\mathcal{K})_{+}$, there exists a sequence $(a_{n})$ belonging 
to $M_{\infty}(A)_{+}$ such that $a=\lim (a_{n})$, and we may assume that $\|a-a_{n}\|\leq 1/n$.
By Lemma 2.2 in \cite{kiror}, there are contractions $d_n$ in $A\otimes\mathcal{K}$ such that $(a-1/n)_{+}=d_{n}a_{n}d_{n}^{*}$. Thus  
\[
(a-1/n)_{+}=d_{n}a_{n}d_{n}^{*}\sim a^{1/2}_{n}d^{*}_{n}d_{n}a^{1/2}_{n}\,,
\]
and $b_n:=a_{n}^{1/2}d^{*}_{n}d_{n}a_{n}^{1/2}\in M_{\infty}(A)_{+}$. This implies that $\langle a\rangle=\sup \langle b_n\rangle$ in $\W(A\otimes\mathcal{K})$ and that the sequence $(\langle b_n\rangle)$ is rapidly increasing in $W(A\otimes\mathcal{K})$.

Notice that the sequence $(\langle b_n\rangle)$ is bounded above in $\W(A)$ by $\langle b\rangle$. Therefore, it also has a supremum $\langle c\rangle$ in $\W(A)$, by \cite[Lemma 4.3]{bpt}.  
The arguments in \cite{bpt} show that for each $n$ there exists $m$ and $\delta_n>0$ with $\delta_n\to 0$ such that $(c-1/n)_{+}\precsim (b_{m}-\delta_{n})_{+}$. This implies then that
\[
(c-1/n)_{+}\precsim (b_{m}-\delta_{n})_{+}\precsim b_{m}\precsim a
\]
in $A\otimes\mathcal{K}$, whence $c\precsim a$.

On the other hand, since clearly $b_{n}\precsim c$ for all $n$, and $\langle a\rangle$ is the supremum in $\W(A\otimes\mathcal{K})$ of $\langle b_n\rangle$, we see that $a\precsim c$. Thus $c\sim a$.
\end{proof}

For a unital C$^*$-algebra $A$, we denote as usual by $\T(A)$ the simplex of normalized traces. Given a trace $\tau\in\T(A)$ and $a\in M_{\infty}(A)_+$, we may construct $d_{\tau}(a)=\lim_n\tau(a^{1/n})$. It turns out that 
\[
d_{\tau}\colon M_{\infty}(A)_+\to\mathbb{R}^+\,,\, a\mapsto d_{\tau}(a)
\]
is lower semicontinuous and does not depend on the Cuntz class of $a$. Thus, it defines a state on $\W(A)$, termed a \emph{lower semicontinuous dimension function}.
Let us denote by $LDF(A)$ the set of all lower semicontinuous dimension functions on $A$. Observe also that the function $\hat{a}\colon \T(A)\to\mathbb{R^+}$ defined by $\hat{a}=(\tau)=d_{\tau}(a)$ is an element of $\mathrm{LAff}(\T(A))^+$.

In the case that $A$ is moreover simple, we say that $A$ has \emph{strict comparison} if the order in $\W(A)$ is determined by lower semicontinuous dimension functions. Namely, if $d_{\tau}(a)<d_{\tau}(b)$ for all $\tau\in \T(A)$, it follows that $a\precsim b$. 

\begin{definition}
\label{def:rc} {\rm (\cite{tomsjfa,tomsplms})} A unital C$^*$-algebra $A$ has \emph{$r$-comparison} if whenever $a$, $b\in M_{\infty}(A)_+$ satisfy
\[
s(\langle a\rangle)+r<s(\langle b\rangle)\,,\text{ for every $s\in\mathrm{LDF}(A)$}\,,
\]
then $\langle a\rangle\leq\langle b\rangle$. The \emph{radius of comparison} of $A$ is
\[
\mathrm{rc}(A)=\inf\{r\geq 0\mid A \text{ has $r$-comparison}\}\,,
\]
which is understood to be $\infty$ if the infimum does not exist. 
\end{definition}

\begin{theorem}
\label{thm:rcher} {\rm (\cite{betal})} Let $A$ be a unital C$^*$-algebra with finite radius of comparison. Then $\W(A)$ is hereditary.
\end{theorem}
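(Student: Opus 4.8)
The plan is to verify the definition of \emph{hereditary} directly, following closely the scheme of Lemma~\ref{lem:sr1her} and isolating the one place where stable rank one was used, so that finite radius of comparison can be substituted there. Since $\iota\colon\W(A)\to\W(A\otimes\mathcal K)$ is already known to be an order-embedding, it remains to check the lifting property: given $a\in(A\otimes\mathcal K)_+$ and $b\in M_\infty(A)_+$ with $a\precsim b$, I must produce $c\in M_\infty(A)_+$ with $c\sim a$. First I would reduce to the case $b\in A_+$: replacing $A$ by $M_k(A)$, which is again unital with finite radius of comparison and satisfies $\W(M_k(A))\cong\W(A)$ and $M_k(A)\otimes\mathcal K\cong A\otimes\mathcal K$ (so that heredity is unaffected), I may assume $b\in A_+$. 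As $A$ is hereditary in $A\otimes\mathcal K$, the hereditary subalgebra $\overline{b(A\otimes\mathcal K)b}=\overline{bAb}$ is then contained in $A$.

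Next, as in Lemma~\ref{lem:sr1her}, I would use $a\precsim b$ together with \cite[Lemma 2.2]{kiror} to write $(a-1/n)_+=d_nbd_n^*$ for contractions $d_n\in A\otimes\mathcal K$, so that $c_n:=b^{1/2}d_n^*d_nb^{1/2}\sim(a-1/n)_+$ with $c_n\in(\overline{bAb})_+\subseteq A_+$. The sequence $(\langle c_n\rangle)$ is rapidly increasing in $\W(A\otimes\mathcal K)$ with $\sup_{\W(A\otimes\mathcal K)}\langle c_n\rangle=\langle a\rangle$, and it is bounded above in $\W(A)$ by $\langle b\rangle$.

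The crux, and the main obstacle, is to show that this bounded, rapidly increasing sequence $(\langle c_n\rangle)$ admits a supremum $\langle c\rangle$ \emph{in} $\W(A)$; this is precisely where the stable-rank-one argument (via \cite[Lemma 4.3]{bpt}) must be replaced by one using $\mathrm{rc}(A)<\infty$. The idea is that the dimension functions satisfy $d_\tau(c_n)\nearrow d_\tau(a)$, so the candidate least upper bound is detected on $\mathrm{LDF}(A)$; finite radius of comparison says the order on $\W(A)$ is governed by $\mathrm{LDF}(A)$ up to the additive error $r=\mathrm{rc}(A)$, and since $\mathrm{rc}(M_k(A))=\mathrm{rc}(A)/k\to 0$, this error becomes negligible under amplification. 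Carrying this out—constructing an element of $M_\infty(A)_+$ realizing the limiting dimension function and verifying that it is the least upper bound—is the technical heart of the matter and is the content supplied by \cite{betal}; I expect this comparison-theoretic construction of the supremum to be the hardest and most delicate step.

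Finally, granted $\langle c\rangle=\sup_{\W(A)}\langle c_n\rangle$, I would conclude $c\sim a$ exactly as in Lemma~\ref{lem:sr1her}: from $\langle c_n\rangle\leq\langle c\rangle$ and $\langle a\rangle=\sup_{\W(A\otimes\mathcal K)}\langle c_n\rangle$ one obtains $a\precsim c$, while the description of the supremum yields, for each $n$, some $m$ and $\delta_n\to0$ with $(c-1/n)_+\precsim(c_m-\delta_n)_+\precsim c_m\precsim a$, giving $c\precsim a$. Hence $c\sim a$ with $c\in M_\infty(A)_+$, which shows that $\iota$ is hereditary and completes the proof.
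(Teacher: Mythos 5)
The paper offers no proof of this statement: it is quoted directly from \cite{betal} (at the time ``in preparation''), so there is no internal argument to compare yours against. Your proposal is a sensible \emph{reduction}, not a proof. You correctly observe that the argument of Lemma~\ref{lem:sr1her} carries over verbatim except at the single point where \cite[Lemma 4.3]{bpt} (stable rank one) is invoked to produce a supremum in $\W(A)$ of the bounded, rapidly increasing sequence $(\langle c_n\rangle)$, and that the entire content of the theorem is to replace that step using $\mathrm{rc}(A)<\infty$. But you then defer exactly that step back to \cite{betal}, so the one thing that needed proving is left unproved. The heuristic you offer in its place --- detect the candidate supremum on $\mathrm{LDF}(A)$, use $r$-comparison up to an additive error, and kill the error by amplifying since $\mathrm{rc}(M_k(A))=\mathrm{rc}(A)/k$ --- is not an argument: one must actually construct an element of $M_\infty(A)_+$ realizing the limiting dimension-function data and then verify it is a \emph{least} upper bound, and for a non-simple unital $A$ the functionals in $\mathrm{LDF}(A)$ need not determine the order even approximately, so comparison in the sense of Definition~\ref{def:rc} does not by itself yield the required element.

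There is a second, smaller gap at the end. Even granting a supremum $\langle c\rangle=\sup_{\W(A)}\langle c_n\rangle$, your concluding step $c\precsim a$ relies on the chain $(c-1/n)_+\precsim(c_m-\delta_n)_+$, which in Lemma~\ref{lem:sr1her} is a by-product of the \emph{specific construction} of the supremum in \cite{bpt}, not a formal consequence of $\langle c\rangle$ being a least upper bound; you cannot instead invoke $\langle(c-1/n)_+\rangle\ll\langle c\rangle$ in $\W(A)$, since by Lemma~\ref{lem:hervssup} that already presupposes heredity. So whatever construction replaces the stable-rank-one step must also deliver this approximation property. In short, your outline locates the difficulty correctly and organizes the surrounding bookkeeping properly, but the technical heart of the theorem is exactly the part you leave to \cite{betal}.
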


\section{Abstract completions}\label{sec:complecio}

The purpose of this section is to define the notion of completion for an object of PreCu and establish the universal property it satisfies, from which uniqueness of this object will follow. Existence will be proved by constructing a concrete completion and will be carried out in the next section. We also show that the completion process induces a left adjoint functor of the identity.

\begin{definition}
\label{dfn:completion}
Let $M$ be an object of PreCu. We say that a pair $(N,\iota)$ is a \emph{completion} of $M$ if
\begin{enumerate}[{\rm (i)}] 
\item $N$ is an object of Cu, 
\item $\iota\colon M\to N$ is an order-embedding in PreCu, and
\item for any $x\in N$, there is a rapidly increasing sequence $(x_{n})$ in $M$ such that $x=\sup\iota(x_{n})$.
\end{enumerate}
\end{definition}

\begin{lemma}\label{orderembedding}
Let $M$, $N$ be objects of PreCu and let $\alpha\colon M\to N$ be a map in PreCu. The following conditions are equivalent:
\begin{enumerate}[{\rm (i)}]
\item  $\alpha$ is an order-embedding.
\item $\alpha (x)\ll\alpha(y)$ 
if and only if $x\ll y$. 
\end{enumerate}
\end{lemma}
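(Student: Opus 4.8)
The plan is to prove the two implications separately, in each case leaning on the defining feature of PreCu that every element is the supremum of a rapidly increasing sequence, together with the fact that $\alpha$, being a map in PreCu, preserves both such suprema and the relation $\ll$. Note first that one half of (ii) is automatic: since $\alpha$ preserves $\ll$, we always have $x\ll y\Rightarrow\alpha(x)\ll\alpha(y)$, so in both directions the real content concerns the converse implication $\alpha(x)\ll\alpha(y)\Rightarrow x\ll y$ and its interaction with the order-embedding property. I will also use repeatedly the monotonicity of $\ll$ recorded after the first definition (if $x\leq y$ and $y\ll z$ then $x\ll z$, and if $x\ll y$ and $y\leq z$ then $x\ll z$), as well as the elementary remark that $x\ll y$ forces $x\leq y$, which follows by applying the definition of $\ll$ to the constant sequence with value $y$, whose supremum is $y$.

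For (i) $\Rightarrow$ (ii), suppose $\alpha$ is an order-embedding and that $\alpha(x)\ll\alpha(y)$; I must produce $x\ll y$. Using that $M$ lies in PreCu, write $y=\sup y_n$ for a rapidly increasing sequence $(y_n)$. Since $\alpha$ preserves suprema of increasing sequences, $\alpha(y)=\sup\alpha(y_n)$, and the hypothesis $\alpha(x)\ll\alpha(y)$ then yields an index $k$ with $\alpha(x)\leq\alpha(y_k)$. Because $\alpha$ is an order-embedding this gives $x\leq y_k$. Finally $y_k\ll y_{k+1}\leq y$ shows $y_k\ll y$, and combining $x\leq y_k$ with $y_k\ll y$ gives $x\ll y$, as required.

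For (ii) $\Rightarrow$ (i), assume the equivalence in (ii) and let $\alpha(a)\leq\alpha(b)$; I must deduce $a\leq b$. Again write $a=\sup a_n$ with $(a_n)$ rapidly increasing, so that $a_n\ll a_{n+1}\leq a$ and hence $a_n\ll a$ for every $n$. Applying $\alpha$ (or directly the easy half of (ii)) gives $\alpha(a_n)\ll\alpha(a)$, and since $\alpha(a)\leq\alpha(b)$ we obtain $\alpha(a_n)\ll\alpha(b)$. Now the nontrivial half of (ii) converts this into $a_n\ll b$, whence $a_n\leq b$ for all $n$. As $a=\sup a_n$ is the least upper bound and $b$ is an upper bound of $(a_n)$, we conclude $a\leq b$.

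I do not anticipate a serious obstacle: the argument is a matter of chaining the way-below and order relations correctly. The one point that must be handled with a little care is the passage from $a_n\ll b$ to $a_n\leq b$ in the second implication (equivalently, the symmetric step $y_k\ll y$ in the first), which is where the remark that $\ll$ refines $\leq$ — proved via the constant sequence — does the work; everything else is a direct unwinding of the PreCu axioms and the order-embedding hypothesis.
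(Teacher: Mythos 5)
Your proof is correct and follows essentially the same route as the paper's: the (ii) $\Rightarrow$ (i) direction is identical, and in (i) $\Rightarrow$ (ii) you merely replace the paper's direct verification of the definition of $x\ll y$ (testing against an arbitrary sequence $(z_n)$ dominating $y$) with the equivalent step of writing $y=\sup y_n$ for a rapidly increasing sequence and chaining $x\leq y_k\ll y$. Your explicit justification that $\ll$ refines $\leq$ via the constant sequence is a point the paper uses tacitly, and it is correct.
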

\begin{proof}
(i) $\Rightarrow$ (ii): If $x\ll y$, then as $\alpha$ is a map in PreCu, it follows that $\alpha (x)\ll\alpha(y)$. Conversely, suppose that $\alpha(x)\ll\alpha(y)$. If $y\leq \sup(z_{n})$, then $\alpha(y)\leq \alpha(\sup(z_n))=\sup\alpha(z_n)$, so that there is $n$  with $\alpha(x)\leq \alpha(z_{n})$. Since $\alpha$ is order-embedding, this implies $x\leq z_{n}$ .

(ii) $\Rightarrow$ (i): Suppose now that $\alpha(x)\leq\alpha(y)$. Write $x=\sup(x_{n})$, where $x_{n}\ll x_{n+1}$. Since $\alpha$ is a map in PreCu, $\alpha(x)=\sup(\alpha(x_{n}))$ and $\alpha(x_{n})\ll\alpha(x_{n+1})$. Therefore, $\alpha(x_{n})\ll\alpha(x_{n+1})\leq\alpha(y)$. Using the hyphotesis we obtain that $x_{n}\ll y$ for all $n$, and then $x\leq y$.
\end{proof}

\begin{theorem}\label{universality}
Let $M$ be an object of PreCu, $P$ an object of $Cu$ and $\alpha\colon M\to P$ a map in PreCu. Then, if $(N,\iota)$ is a completion of $M$, there exists a unique map $\beta\colon N\to P$ in Cu such that $\beta\circ\iota=\alpha$. Moreover, if $\alpha$ is an order-embedding then so is $\beta$.
\end{theorem}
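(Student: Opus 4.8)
The plan is to build $\beta$ by pushing an approximating sequence forward through $\alpha$. Given $x\in N$, condition (iii) of Definition \ref{dfn:completion} provides a rapidly increasing sequence $(x_n)$ in $M$ with $x=\sup\iota(x_n)$. Since $x_n\ll x_{n+1}$ forces $\alpha(x_n)\leq\alpha(x_{n+1})$ and $P$ lies in Cu, the supremum $\sup\alpha(x_n)$ exists in $P$, so I would set $\beta(x)=\sup_P\alpha(x_n)$. Everything then reduces to checking that this assignment is independent of the chosen sequence and that it inherits the structural properties of $\alpha$ and $\iota$.

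The hard part, and the step I expect to need the most care, is well-definedness. The plan is an interleaving argument: if $(x_n)$ and $(y_n)$ are two rapidly increasing sequences in $M$ with $\sup\iota(x_n)=\sup\iota(y_n)=x$, then for each $n$ one has $\iota(x_n)\ll\iota(x_{n+1})\leq x=\sup_m\iota(y_m)$, where the way-below relation is preserved because $\iota$ is a map in PreCu. The very definition of $\ll$ then yields an index $m$ with $\iota(x_n)\leq\iota(y_m)$, and as $\iota$ is an order-embedding this gives $x_n\leq y_m$, whence $\alpha(x_n)\leq\alpha(y_m)\leq\sup_j\alpha(y_j)$. Letting $n$ vary and arguing symmetrically shows $\sup\alpha(x_n)=\sup\alpha(y_j)$, so $\beta$ is well defined.

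Next I would verify that $\beta$ is a map in Cu with $\beta\circ\iota=\alpha$, and here the same way-below mechanism does all the work. For $\beta\circ\iota=\alpha$, I write $a=\sup a_n$ using property (i) of PreCu, note $\iota(a)=\sup\iota(a_n)$ since $\iota$ preserves suprema, and conclude $\beta(\iota(a))=\sup\alpha(a_n)=\alpha(a)$ because $\alpha$ preserves suprema. Additivity and preservation of $0$ follow because $\ll$ and suprema are compatible with addition in $M$, $N$ and $P$: if $x=\sup\iota(x_n)$ and $x'=\sup\iota(x'_n)$, then $(x_n+x'_n)$ is again rapidly increasing with $x+x'=\sup\iota(x_n+x'_n)$, so $\beta(x+x')=\sup\alpha(x_n+x'_n)=\beta(x)+\beta(x')$. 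Monotonicity, preservation of $\ll$, and preservation of arbitrary increasing suprema all run through the identical template: I approximate an element by a rapidly increasing sequence from $M$ and use the definition of $\ll$ to push each approximant below the relevant target. For instance, to prove $\beta(\sup w_k)=\sup\beta(w_k)$ for an increasing $(w_k)$ with $w=\sup w_k$, I take $(u_n)$ rapidly increasing with $\sup\iota(u_n)=w$; then $\iota(u_n)\ll\iota(u_{n+1})\leq\sup w_k$ produces $k$ with $\iota(u_n)\leq w_k$, so $\alpha(u_n)=\beta(\iota(u_n))\leq\beta(w_k)$, giving $\beta(w)\leq\sup\beta(w_k)$, the reverse inequality being monotonicity.

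Finally, uniqueness is immediate: any map $\beta'$ in Cu with $\beta'\circ\iota=\alpha$ satisfies $\beta'(x)=\sup\beta'(\iota(x_n))=\sup\alpha(x_n)=\beta(x)$, since $\beta'$ preserves suprema. For the moreover clause, since $\beta$ is a map in PreCu between the objects $N$ and $P$ of Cu, by Lemma \ref{orderembedding} it suffices to show $\beta(x)\ll\beta(y)\Rightarrow x\ll y$. Assuming $\alpha$ is an order-embedding and writing $x=\sup\iota(x_m)$ and $y=\sup\iota(y_n)$, the relation $\beta(x)\ll\beta(y)=\sup_n\alpha(y_n)$ gives $\beta(x)\leq\alpha(y_n)$ for some $n$; hence $\alpha(x_m)\leq\alpha(y_n)$ for all $m$, so $x_m\leq y_n$ by the order-embedding hypothesis on $\alpha$, which yields $x\leq\iota(y_n)\ll\iota(y_{n+1})\leq y$ and therefore $x\ll y$, completing the argument.
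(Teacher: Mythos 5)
Your proposal is correct and follows essentially the same route as the paper: define $\beta(x)=\sup_P\alpha(x_n)$ for a rapidly increasing approximating sequence, establish well-definedness by interleaving two such sequences via the way-below relation, and verify the Cu-morphism properties and uniqueness by the same mechanism. The only cosmetic difference is in the final clause, where you reflect $\ll$ and invoke Lemma \ref{orderembedding} while the paper argues directly that $\beta(x)\leq\beta(y)$ implies $x\leq y$; both are valid.
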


\begin{proof}
Since we can write any $x\in N$ as $x=\sup\iota (x_{n})$, where $(x_{n})$ is a rapidly increasing sequence in $M$, define $\beta\colon N\to P$ by $\beta(x)=\sup(\alpha(x_{n}))$. We need to check that $\beta$ is well defined. 

Suppose that $x=\sup\iota(x_{n})=\sup \iota(y_{m})$ where $(x_{n})$ and $(y_{m})$ are both rapidly
increasing sequences in $M$. Then, for every $n$, there exist $m$ and $k$ such that  
\[
x_{n}\ll y_{m}\ll x_{k}\,
\]
Since $\alpha$ is a map in PreCu, we obtain that 
\[
\alpha(x_{n})\ll \alpha(y_{m})\ll \alpha(x_{k})\,.
\]
Therefore $\sup \alpha(x_{n})=\sup \alpha(y_{n})$, whence $\beta$ is well-defined.

That $\beta$ is additive and preserves the identity element follows easily from the fact that $\alpha$ belongs to PreCu.

Next, let $x\leq y$ in $N$. Write $x=\sup(\iota(x_{n}))$ and $y=\sup(\iota(y_{n}))$, where $(x_n)$ and $(y_n)$ are rapidly increasing. As before we obtain that for every $n$, there is $m$ with $x_{n}\ll y_{m}$, so then
\[
\alpha(x_{n})\ll \alpha(y_{m})\leq \sup\alpha(y_k)=\beta(y)\,.
\]
Therefore $\beta(x)=\sup(\alpha(x_{n}))\leq\beta(y)$, hence $\beta$ is order-preserving.

If now $x=\sup (x_n)\in M$, with $(x_{n})$ rapidly increasing, apply $\iota$ followed by $\beta$ so that
\[
\beta(\iota(x))=\beta(\iota(\sup(x_{n})))=\beta(\sup(\iota(x_{n})))=\sup(\alpha(x_{n}))=\alpha(x)\,,
\]
which shows $\beta\circ\iota=\alpha$.

Suppose that $x\ll y$, for elements $x$, $y$ in $N$. Write $y=\sup\iota(y_n)$, where $(y_n)$ is a rapidly increasing sequence in $M$. There exists then $n$ such that $x\leq \iota (y_n)$, and since $y_n\ll y_{n+1}$, we may apply $\alpha$ to obtain $\alpha(y_n)\ll \alpha(y_{n+1})$. Now
\[
\beta(x)\leq\beta(\iota(y_n))=\alpha(y_n)\ll\alpha(y_{n+1})=\beta(\iota(y_{n+1}))\leq\beta(y)\,,
\]
hence $\beta (x)\ll\beta (y)$.

It remains to be shown that $\beta$ preserves suprema. Let $\{x_{n}\}$ be an increasing sequence in $N$ and let $x=\sup(x_{n})$. As $\beta$ is order-preserving we readily get $\beta(x_{n})\leq \beta(x)$ for all $n$. Therefore $\sup(\beta(x_{n}))\leq \beta(x)$.
Since we can also write $x=\sup\iota(y_{n})$ (for a rapidly increasing sequence $(y_n)$) it follows that, given $n$, there is $m$ with $\iota(y_{n})\leq x_{m}$. Apply $\beta$ to get 
$\alpha(y_n)=(\beta\circ\iota)(y_{n})\leq \beta(x_{m})\leq \sup(\beta(x_{k}))$, whence 
\[
\beta(x)=\sup(\alpha(y_{n}))\leq \beta(x_{m})\leq \sup(\beta(x_{k}))\,. 
\]

We now prove that $\beta$ is unique. To this end, assume $\beta'\colon N\to P$ is another map in $Cu$ such that $\beta'\circ\iota=\alpha=\beta\circ\iota$. Let $x=\sup(\iota(x_{n}))$
be an element in $N$. Then
\[
\beta'(x)=\beta'(\sup\iota(x_{n}))=\sup((\beta'\circ\iota)(x_{n}))=\sup((\beta\circ\iota)(x_{n}))=
\beta(\sup\iota(x_{n}))=\beta(x)\,.
\]
Assume finally that $\alpha$ is an order-embedding and suppose that $\beta(x)\leq\beta(y)$. With $x=\sup\iota(x_{n})$, and $y=\sup\iota(y_{n})$ for rapidly increasing sequences $(x_n)$ and $(y_n)$, this implies that, for any $n$,
\[
\alpha(x_n)\ll\alpha(x_{n+1})\leq\sup\alpha(x_n)\leq\sup\alpha(y_n)\,. 
\]
There is then $m$ depending on $n$ with $\alpha(x_n)\leq\alpha(y_m)$, that is, $x_n\leq y_m$ (as $\alpha$ is an order-embedding). Thus $\iota(x_{n})\leq \iota(y_{m})\leq y$, and so $x\leq y$. This shows that $\beta$ is an order-embedding.
\end{proof}
A standard argument yields the following.
\begin{corollary}
Let $M$ be an object in PreCu. If there is a completion of $M$, then it is unique (up to order-isomorphism).
\end{corollary}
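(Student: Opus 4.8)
The plan is to run the standard universal-property argument, invoking Theorem~\ref{universality} twice, together with its uniqueness clause. Suppose $(N,\iota)$ and $(N',\iota')$ are two completions of $M$. By Definition~\ref{dfn:completion} both $N$ and $N'$ are objects of Cu, and both $\iota\colon M\to N$ and $\iota'\colon M\to N'$ are order-embeddings in PreCu. The goal is to produce mutually inverse order-isomorphisms between $N$ and $N'$ that are compatible with the structure maps.

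First I would apply Theorem~\ref{universality} to the completion $(N,\iota)$ with $P=N'$ and $\alpha=\iota'$; since $N'$ is an object of Cu and $\iota'$ is a map in PreCu, this yields a unique map $\beta\colon N\to N'$ in Cu with $\beta\circ\iota=\iota'$, and, as $\iota'$ is an order-embedding, the ``moreover'' clause guarantees that $\beta$ is an order-embedding as well. Symmetrically, applying the theorem to $(N',\iota')$ with $P=N$ and $\alpha=\iota$ produces an order-embedding $\beta'\colon N'\to N$ in Cu with $\beta'\circ\iota'=\iota$.

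The key step is to check that $\beta$ and $\beta'$ are mutually inverse, and here the uniqueness part of Theorem~\ref{universality} does the work. Indeed, $\beta'\circ\beta\colon N\to N$ is a map in Cu satisfying $(\beta'\circ\beta)\circ\iota=\beta'\circ\iota'=\iota$, while $\mathrm{id}_N$ is also a map in Cu with $\mathrm{id}_N\circ\iota=\iota$. Invoking the uniqueness clause applied to the completion $(N,\iota)$ with $P=N$ and $\alpha=\iota$ forces $\beta'\circ\beta=\mathrm{id}_N$, and the symmetric argument gives $\beta\circ\beta'=\mathrm{id}_{N'}$. Thus $\beta$ is a bijection whose inverse $\beta'$ is again a map in Cu; being a bijective order-embedding it is an order-isomorphism, and it satisfies $\beta\circ\iota=\iota'$, which is exactly what uniqueness of the completion up to order-isomorphism requires.

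I do not anticipate a genuine obstacle, since all the substance is already encapsulated in Theorem~\ref{universality}; the only points demanding care are the bookkeeping of which completion serves as the universal object in each of the two applications, and confirming that the hypotheses of the theorem (source in PreCu, target in Cu, the relevant map an order-embedding in PreCu) are satisfied at every invocation. This is presumably why the authors prefix the statement with ``A standard argument yields the following.''
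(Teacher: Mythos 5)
Your proof is correct and is precisely the ``standard argument'' the paper alludes to without writing out: two applications of Theorem~\ref{universality} to obtain $\beta$ and $\beta'$, followed by the uniqueness clause (applied to $(N,\iota)$ with $P=N$, $\alpha=\iota$, and symmetrically) to conclude they are mutually inverse. Nothing further is needed.
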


In view of the corollary above, given an object $M$ of PreCu, we shall write $(\overline{M},\iota)$ to refer to its (unique) completion, in case it exists.

\begin{proposition}
If every object of PreCu has a completion $(\overline{M},\iota)$, then the map $\iota\colon M\to\overline{M}$ induces a covariant functor
$C\colon PreCu\to Cu$, which is a left adjoint of the identity, i.e., if $i\colon Cu\to PreCu$ is the inclusion functor, then $C\circ i\sim id_{Cu}$ (where $\sim$ denotes natural equivalence).
\end{proposition}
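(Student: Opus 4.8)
The plan is to let every verification rest on the uniqueness clause of Theorem~\ref{universality}, so that no genuine computation is needed. First I would define $C$ on objects by $C(M)=\overline{M}$, writing $\iota_M\colon M\to\overline{M}$ for the structure map of the completion. To define $C$ on a morphism $\alpha\colon M\to N$ in PreCu, I would form the composite $\iota_N\circ\alpha\colon M\to\overline{N}$, which is a PreCu-map into the Cu-object $\overline{N}$; Theorem~\ref{universality} then produces a unique Cu-map $\overline{\alpha}\colon\overline{M}\to\overline{N}$ with $\overline{\alpha}\circ\iota_M=\iota_N\circ\alpha$, and I set $C(\alpha)=\overline{\alpha}$.

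Functoriality is then immediate from uniqueness. For the identity, $\mathrm{id}_{\overline{M}}$ satisfies $\mathrm{id}_{\overline{M}}\circ\iota_M=\iota_M\circ\mathrm{id}_M$, so $C(\mathrm{id}_M)=\mathrm{id}_{\overline{M}}$. For composition, given $\alpha\colon M\to N$ and $\alpha'\colon N\to P$ in PreCu, the map $C(\alpha')\circ C(\alpha)$ satisfies $(C(\alpha')\circ C(\alpha))\circ\iota_M=C(\alpha')\circ\iota_N\circ\alpha=\iota_P\circ(\alpha'\circ\alpha)$, which is precisely the defining equation of $C(\alpha'\circ\alpha)$; uniqueness forces $C(\alpha'\circ\alpha)=C(\alpha')\circ C(\alpha)$.

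For the adjunction I would observe that Theorem~\ref{universality} says exactly that $\iota_M\colon M\to i(C(M))$ is a universal arrow from $M$ to the inclusion functor $i$. Concretely, for $M$ in PreCu and $P$ in Cu the assignment $\beta\mapsto\beta\circ\iota_M$ gives a map $\mathrm{Hom}_{Cu}(\overline{M},P)\to\mathrm{Hom}_{PreCu}(M,i(P))$ whose surjectivity and injectivity are the existence and uniqueness halves of the theorem, hence a bijection. Naturality in $P$, for $g\colon P\to P'$ in Cu, follows from $g\circ(\beta\circ\iota_M)=(g\circ\beta)\circ\iota_M$; naturality in $M$, for $f\colon M'\to M$ in PreCu, follows from the identity $C(f)\circ\iota_{M'}=\iota_M\circ f$ built into the definition of $C(f)$, since then $(\beta\circ\iota_M)\circ f=\beta\circ(C(f)\circ\iota_{M'})=(\beta\circ C(f))\circ\iota_{M'}$. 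This exhibits $C$ as a left adjoint of $i$.

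Finally, to obtain $C\circ i\sim\mathrm{id}_{Cu}$, I would check that for an object $P$ of Cu the pair $(P,\mathrm{id}_P)$ already satisfies the three conditions of Definition~\ref{dfn:completion}: $P$ lies in Cu, $\mathrm{id}_P$ is an order-embedding in PreCu, and every element of $P$ is a supremum of a rapidly increasing sequence by property~(i) of Definition~\ref{def:precu}. By the uniqueness of completions (the corollary following Theorem~\ref{universality}) there is a canonical order-isomorphism $\overline{P}\cong P$, realized as the unique Cu-map $\beta_P$ with $\beta_P\circ\iota_P=\mathrm{id}_P$, and naturality of the family $(\beta_P)_P$ is one more application of the uniqueness clause. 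I expect the only delicate point to be the bookkeeping of the naturality squares and keeping the distinction between $P$ and $\overline{P}$ straight; the entire algebraic content is supplied by Theorem~\ref{universality}, so there is no substantive obstacle beyond this categorical routine.
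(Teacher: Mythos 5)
Your proposal is correct and follows essentially the same route as the paper: define $C$ on morphisms via the universal property of Theorem~\ref{universality} applied to $\iota_N\circ\alpha$, derive functoriality from the uniqueness clause, and identify $\overline{P}\cong P$ for $P$ in Cu by observing that $(P,\mathrm{id}_P)$ is already a completion. You simply spell out more of what the paper leaves as "a simple matter to check" (the functoriality verifications, the hom-set bijection for the adjunction, and the naturality squares), all of which are filled in correctly.
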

\begin{proof}
Put 
\[
\begin{tabular}{c c c c}
  $C : $ & PreCu &$\longrightarrow$ & Cu \\
      & $M$& $\mapsto$  & $\overline{M}$ \\

\end{tabular}\\\,.
\]
Let $\sigma\colon M\to N$ be a map in PreCu, and let $(\overline{M},\iota)$ and $(\overline{N},\iota_{1})$ be the completions of $M$ and $N$ respectively. 

Now use that the map $\iota_{1}\circ\sigma:M\to\overline{N}$ belongs to PreCu and Theorem \ref{universality}, so that there is a unique morphism $\overline{\sigma}\colon\overline{M}\to\overline{N}$ in Cu such that $\overline{\sigma}\circ\iota=\iota_{1}\circ\sigma$. In fact, if $x=\sup(\iota(x_{n}))$ is an element in $\overline{M}$ (with $(x_n)$ rapidly increasing in $M$), then
\[
\overline{\sigma}(x)=\sup((\iota_{1}\circ\sigma)(x_{n}))\,.
\]

It is a simple matter to check that $\overline\sigma$ satisfies $\overline{\mathrm{id}_M}=\mathrm{id}_{\overline{M}}$ and that $\overline{\sigma\circ\tau}=\overline{\sigma}\circ\overline{\tau}$.

Finally, notice that if $M$ is already an object of Cu, then $(M,\mathrm{id})$ is a completion, hence unique, whence $C\circ i(M)\cong M$.
\end{proof}
\begin{remark}
It follows from the result above that, if every object in PreCu admits a completion and $M$ is in PreCu, then $\overline{\overline M}\cong \overline M$.
\end{remark}

\section{Existence of completions in PreCu}\label{sec:existence}

In order to construct the completion of an object of PreCu, we basically need to add suprema of every ascending sequence of the given object. This is best captured by using intervals -- we recall the definitions below. 

Let $M$ be a partially ordered abelian monoid. An \emph{interval} in $M$ is a nonempty subset $I\subseteq M$ which is upward directed and order-hereditary (i.e. if $x\leq y$ and $y\in I$, then $x\in I$). Denote $\Lambda(M)$ the set of all intervals in $M$. Note that $\Lambda(M)$ is equipped with a natural abelian monoid structure, namely if $I,J\in\Lambda(M)$, then
\[
I+J=\{z\in M\mid z\leq x+y \text{ for some } x\in I, y\in J\}\,.
\]

We shall be exclusively concerned with \emph{countably generated} intervals. Those are elements $I$ in $\Lambda (M)$ that have a countable cofinal subset, that is, a countable subset $X$ -- that may always be assumed to be upwards directed -- such that  $I=\{x\in M\mid x\leq y \text{ for some }y\in X\}$. In that case, there is an increasing sequence $(x_n)$ in $M$ with $I=\{x\in M\mid x\leq x_n\text{ for some }n \}$. We will denote by $\Lambda_{\sigma}(M)$ the monoid of countably generated intervals over M. We shall sometimes refer to a rapidly increasing cofinal subset $X$ for an interval $I$, meaning that for any $x$, $y\in X$, there is $z\in X$ with $x$, $y\ll z$. Note that an interval $I$ with rapidly increasing countable cofinal subset may be then written as $I=\{x\in M\mid x\leq x_n\text{ for some }n\}$ where now $(x_n)$ is a rapidly increasing sequence.

%In order to ease the notation, we will denote a countable generated intervals using its cofinal subset, i.e., if 
%$I\in\Lambda_{\sigma}(M)$ and $I=\{y\in M\mid y\leq x_{n} \text{ for some } n\}$ we will denote $I=\{x_{n}\}$. And using the above remark 
%we will always consider that the sequence which defines each interval is increasing.

For intervals $I$ and $J$ in $\Lambda_{\sigma}(M)$, write $I\precsim J$ if given $x$ in $I$ and $z\ll x$, there exists $y\in J$ such that $z\ll y$. It is 
clear that this is equivalent to taking $x$ and $y$ above in countable cofinal subsets $X$ and $Y$ for $I$ and $J$ respectively. The relation $\precsim$ is easily seen to be reflexive and transitive, and induces a congruence on $\Lambda_{\sigma}(M)$ by defining $I\sim J$ if $I\precsim J$ and $J\precsim I$.

Put $\overline{M}=\Lambda_{\sigma}(M)/\sim$, and denote the elements of $\overline{M}$ by $[I]$.

\begin{proposition}\label{propo}
Let $M$ be an object in PreCu. Then $\overline{M}$ is a partially ordered monoid.
\end{proposition}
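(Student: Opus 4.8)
The plan is to build up $\overline M$ in three stages: first the commutative monoid structure on $\Lambda_\sigma(M)$, then the fact that $\precsim$ is compatible with addition (so that $\sim$ is a congruence and $\overline M$ is a monoid), and finally the verification that the induced relation is a partial order with the properties required in Section~1.

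First I would check that $\Lambda_\sigma(M)$ is a commutative monoid under the given addition. The only point needing care is that $I+J$ lies again in $\Lambda_\sigma(M)$: it is immediate that $I+J$ is nonempty and order-hereditary, it is upward directed because $I$ and $J$ are, and if $(x_n)$ and $(y_n)$ are increasing cofinal sequences for $I$ and $J$, then $(x_n+y_n)$ is cofinal in $I+J$, so $I+J$ is countably generated. Commutativity is clear, associativity is a routine unravelling of the definition, and the neutral element is $\mathbf 0=\{x\in M\mid x\le 0\}=\{0\}$ (using $x\ge 0$ for all $x$), since $I+\mathbf 0=I$ by order-heredity of $I$.

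The heart of the argument is that $\precsim$ is compatible with addition, i.e.\ that $I\precsim I'$ and $J\precsim J'$ imply $I+J\precsim I'+J'$; granting this, together with the reflexivity and transitivity already noted, $\sim$ is a congruence and $\overline M$ inherits a commutative monoid structure. To prove compatibility I would take $w\in I+J$ and $z\ll w$; then $w\le x+y$ for some $x\in I$, $y\in J$, so $z\ll x+y$. Writing $x=\sup_n x_n$ and $y=\sup_n y_n$ for rapidly increasing sequences (Definition~\ref{def:precu}(i)) and using that suprema are compatible with addition, one gets $x+y=\sup_n(x_n+y_n)$, whence $z\le x_k+y_k$ for some $k$. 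Since $x_{k+1}\le x\in I$ we have $x_{k+1}\in I$, and $x_k\ll x_{k+1}$, so $I\precsim I'$ supplies $x'\in I'$ with $x_k\ll x'$; symmetrically there is $y'\in J'$ with $y_k\ll y'$. Compatibility of $\ll$ with addition then gives $x_k+y_k\ll x'+y'$, hence $z\le x_k+y_k\ll x'+y'\in I'+J'$, so $z\ll x'+y'$, which is exactly $I+J\precsim I'+J'$. I expect this decomposition step to be the main obstacle, precisely because $\precsim$ only transfers information about elements way below $x$ or $y$ \emph{individually}, and it is the PreCu axioms that allow one to pass from the joint relation $z\ll x+y$ to the coordinatewise estimate $z\le x_k+y_k$.

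Finally I would define $[I]\le[J]$ by $I\precsim J$. This is well defined on $\sim$-classes and transitive by transitivity of $\precsim$, it is reflexive, and it is antisymmetric by the very definition of $\sim$, so it is a partial order on $\overline M$. Compatibility with addition, namely $I\precsim J\Rightarrow I+K\precsim J+K$, is the special case of the statement above obtained by applying it to $I\precsim J$ and the reflexive instance $K\precsim K$. One has $\mathbf 0\precsim I$ for every $I$, because $0\ll y$ holds for all $y$ (if $y\le\sup z_n$ then $0\le z_1$) and $I$ is nonempty, so $[\mathbf 0]$ is a least element; and the order extends the algebraic order, since $I\precsim I+K$ always holds (as $x=x+0\in I+K$ for $x\in I$), so $[I]+[K]=[J]$ forces $[I]\le[J]$. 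Together these show $\overline M$ is a partially ordered monoid in the sense of Section~1.
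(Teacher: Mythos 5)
Your proof is correct and follows essentially the same route as the paper: the key step in both is to reduce $z\ll x+y$ to a coordinatewise bound $z\le x_k+y_k$ by writing $x$ and $y$ as suprema of rapidly increasing sequences and invoking compatibility of suprema and of $\ll$ with addition, then applying $\precsim$ in each coordinate. The extra verifications you include (that $I+J$ is a countably generated interval, the neutral element, the least element, and that the order extends the algebraic one) are routine points the paper leaves implicit.
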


\begin{proof}
Define $[I]\leq [J]$ if $I\precsim J$ in $\Lambda_{\sigma}(M)$, which is clearly well-defined, so that $\overline{M}$ becomes a partially ordered set.

Now, let $I$, $J$, $K$, $L\in\Lambda_{\sigma}(M)$ and assume that $[I]\leq [J]$ and $[K]\leq[L]$ in $\overline{M}$. Choose increasing countable cofinal sets $\{x_n\}$, $\{y_n\}$, $\{z_n\}$ and $\{t_n\}$ for $I$, $J$, $K$ and $L$ respectively. Write each term in the above sequences as a supremum of rapidly increasing sequences, namely: $x_n=\sup_m x_n^m$, $y_n=\sup_m y_n^m$, $z_n=\sup_m z_n^m$ and $t_n=\sup_m t_n^m$.

If $x\ll x_{n}+z_{n}$, then there exists $m$ such that $x\leq x^{m}_{n}+z^{m}_{n}\ll x^{m+1}_{n}+z^{m+1}_{n}$. Since $x^{m+1}_{n}\ll x_{n}$ and $z^{m+1}_{n}\ll z_{n}$, we see that  $x^{m+1}_{n}\ll y_{p}$ and  $z^{m+1}_{n}\ll t_{q}$ for some $p$, $q$. As addition and $\ll$ are compatible, it follows that $x \leq x^{m}_{n}+z^{m}_{n}\ll y_{t}+t_{k}$ where $k=\max\{p,q\}$. Therefore $[I+K]\leq [J+L]$, and we may define $[I]+[J]=[I+J]$. This operation is clearly compatible with the order.
\end{proof}

Let $M$ be an object of PreCu. For any $x\in M$, put $I(x)=[0,x]=\{ y\in M\mid y\leq x\}$, which is clearly a countably generated interval. Next, define a map $\iota$ as follows:
\[
\begin{tabular}{ c c c }
$\iota\colon M$ & $\longrightarrow$ & $\overline{M}$\\
$\,\,\, x$ & $\mapsto$ & $[I(x)]$\,. \\
\end{tabular}
\] 
Observe that $\iota$ is an order-embedding. Indeed, $\iota$ is additive and preserves the order. If now $[I(x)]\leq [I(y)]$ in $\overline{M}$, then $I(x)\precsim I(y)$ in $\Lambda_{\sigma}(M)$. Write $x=\sup(x_{n})$, where $(x_{n})$ is rapidly increasing. Since $x_{n}\ll x$ for all $n$ we have $x_{n}\leq y$ for all $n$, whence $x\leq y$

%The proof of the Lemma below is straightforward and will be omitted.
%
%\begin{lemma}\label{erase}
%Let $M$ be an object in PreCu and let $I\in\Lambda_{\sigma}(M)$, with $X$ a countable cofinal subset. If $X'\subseteq X$ is such that $X\setminus X'$ is finite, then $I'=\{x\in M\mid x\leq y \text{ for some }y\in X'\}\in\Lambda_{\sigma}(M)$ and $[I]=[I']$ in $\overline{M}$.
%\end{lemma}

\begin{lemma}
\label{lem:rapincr}
Let $M$ be an object of PreCu. Then every element in $\Lambda_{\sigma}(M)$ is equivalent to an interval with a rapidly increasing countable cofinal subset.
\end{lemma}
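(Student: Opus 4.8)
The plan is to start from an arbitrary countably generated interval $I$, refine a cofinal sequence into a doubly-indexed family of rapidly increasing sequences via the defining axiom of PreCu, and then extract from that family a single rapidly increasing sequence $(w_k)$ by a diagonal argument, finally checking that the interval it generates is $\sim$-equivalent to $I$. Concretely, I would first fix an increasing cofinal sequence $(x_n)$ with $I=\{x\in M\mid x\le x_n\text{ for some }n\}$, and use axiom (i) of PreCu to write each $x_n=\sup_m x_n^m$ with $(x_n^m)_m$ rapidly increasing, so that $x_n^m\ll x_n^{m+1}\ll x_n$ for all $m$. The whole argument will then take place inside the countable family $\{x_n^m\}$.

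The first substantive step is an \emph{upward directedness} observation for the family $\{x_n^m\}$. Given two members $x_n^m$ and $x_{n'}^{m'}$, say with $n\le n'$, I would argue that $x_n^m\ll x_n\le x_{n'}=\sup_j x_{n'}^j$, so by the definition of $\ll$ there is $j$ (which may be taken $\ge m'$) with $x_n^m\le x_{n'}^j$; since $(x_{n'}^j)_j$ is increasing, $x_{n'}^j$ dominates both given elements. By induction this shows every finite subset of $\{x_n^m\}$ has an upper bound within the family, which is exactly what the diagonal selection below needs.

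Next comes the diagonal construction of $(w_k)$. Setting $w_1=x_1^1$, and having chosen $w_k=x_a^b$, I would exploit the strict one-step bound $w_k=x_a^b\ll x_a^{b+1}$ and then use directedness to pick a member $w_{k+1}$ of the family that simultaneously dominates $x_a^{b+1}$ and all $x_n^m$ with $n,m\le k+1$. Using the observation (stated just after the definition of $\ll$) that $x\ll y\le z$ implies $x\ll z$, this guarantees $w_k\ll w_{k+1}$, so $(w_k)$ is rapidly increasing; moreover each $w_k$ equals some $x_n^m$ and hence lies in $I$, while the construction ensures every $x_n^m$ satisfies $x_n^m\le w_k$ for all sufficiently large $k$. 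I would then let $J=\{y\in M\mid y\le w_k\text{ for some }k\}$, an interval with the rapidly increasing cofinal subset $(w_k)$.

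It remains to verify $I\sim J$. For $J\precsim I$, given $w\in J$ and $z\ll w$, one has $w\le w_k$ for some $k$, so $z\ll w_k$ with $w_k\in I$. For $I\precsim J$, given $x\in I$ and $z\ll x$, one has $x\le x_n$, hence $z\ll x_n=\sup_m x_n^m$, so $z\le x_n^m$ for some $m$; then $z\le x_n^m\ll x_n^{m+1}\le w_k$ for some $k$, giving $z\ll w_k$ with $w_k\in J$. I expect the \textbf{main obstacle} to be the diagonal step: one must arrange at once that consecutive terms are related by the strict relation $\ll$ (not merely $\le$) and that the sequence stays cofinal for the entire double family, and it is precisely the combination of the one-step strict bound $w_k\ll x_a^{b+1}$ with directedness that makes both demands compatible.
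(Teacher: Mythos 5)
Your proof is correct and follows essentially the same route as the paper: refine each term of a cofinal sequence $(x_n)$ into a rapidly increasing sequence $(x_n^m)_m$ via axiom (i) of PreCu, take the doubly-indexed family, and verify both $\precsim$ inequalities exactly as the paper does. Your directedness observation and diagonal extraction of a single $\ll$-increasing sequence $(w_k)$ merely make explicit what the paper absorbs into its (directedness-style) definition of a rapidly increasing countable cofinal subset and the remark following it, so this is a filled-in version of the same argument rather than a different one.
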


\begin{proof}
Let $(x_n)$ be an increasing countable cofinal subset for $I\in  \Lambda_{\sigma}(M)$. For each $n$ choose a rapidly increasing sequence $(x_{n}^m)$ with $x_{n}=\sup(x^{m}_{n})$, and then consider the rapidly increasing cofinal subset $(x_n^m)$ now varying $n$ and $m$. Let $J$ be the interval generated by $(x_n^m)$. It is clear that $J\subseteq I$ and therefore $J\precsim I$. Consider $x\in M$ such that $x\ll x_n$. Since $x_n=\sup x_n^m$, there exists $k$ such that $x\ll x_n^k$. Therefore $I\precsim J$.
%
%
% we have
%\[
%x^{1}_{1}\ll x^{2}_{1}\ll x^{3}_{1}\ll x^{4}_{1}\leq x_{1}\leq x_{2}=\sup(x^{m}_{2})\,.
%\]
%Then $x^{3}_{1}\leq x^{k}_{2}$ for some $k$. Removing the elements between $1$ and $k$ and re-indexing, we may assume that $x^{1}_{1}$, $x^{2}_{1}\ll x^{2}_{2}$. We may similarly assume that $x^{3}_{1}\ll x^{3}_{2}$. Arguing as above, but now with $\{x^{j}_{2}\}$ where $j\in \{1,2,3,4\}$, 
%we find that, after removing finitely many terms and re-indexing again, $x^{3}_{1},x^{3}_{2}\ll x^{3}_{3}$. Proceed inductively, and put $y_{n}=x^{n}_{n}$, so that $y_{n}\ll y_{n+1}$ for all $n$. Let $J\in\Lambda_{\sigma}(M)$ have the sequence $(y_n)$ as a countable cofinal subset.
%
%Notice that $y_{n}=x^{n}_{n}\ll x_{n}$ for all $n$, whence $J\precsim I$. Now, if $x\ll x_{n}$, as $x_{n}=\sup(x^{m}_{n})$ we have that there is $j$ with $x\leq x^{j}_{n}\ll x^{k+1}_{k+1}$, where $k=\max\{n,j\}$. Therefore $x\ll y_{k+1}$, whence $I\precsim J$.
\end{proof}

%From this point, when we consider an element $[I]=\{x_{n}\}$ of $\overline{M}$ always will be considering that $x_{n}\ll x_{n+1}$.
\begin{proposition}\label{suprem}
Let $M$ be an object in PreCu. Then, every increasing sequence in $\overline{M}$ has a supremum in $\overline{M}$. More concretely, if $([I_n])$ is an increasing sequence in $\overline{M}$,  and $X_n$ is a rapidly increasing countable cofinal subset for each $I_n$, then $X=\cup_n X_n$ is a countable cofinal subset for $\sup([I_n])$.
\end{proposition}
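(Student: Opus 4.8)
The plan is to exhibit a concrete interval whose class realizes the supremum and then to show that $X=\bigcup_n X_n$ is cofinal for it. First I would invoke Lemma~\ref{lem:rapincr} to assume, without loss of generality, that each $I_n$ carries a rapidly increasing countable cofinal subset $X_n$; replacing $I_n$ by an equivalent interval does not alter the class $[I_n]$, so this costs nothing. Set $X=\bigcup_n X_n$, which is countable, and let $J=\{w\in M\mid w\le x\text{ for some }x\in X\}$ be the order-hereditary set generated by $X$. The goal is then to prove that $J\in\Lambda_\sigma(M)$ and that $[J]=\sup([I_n])$, with $X$ cofinal for $J$ by construction.

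The main obstacle, and the only step that is not immediate, is verifying that $J$ is upward directed, so that it is genuinely an interval and hence, being countably generated by $X$, belongs to $\Lambda_\sigma(M)$. It suffices to bound two generators $w\in X_p$ and $w'\in X_q$ with $p\le q$ by a single element of $X$. Here I would chain three ingredients: rapid increase inside $X_p$ gives $w''\in X_p$ with $w\ll w''$; the inequality $[I_p]\le[I_q]$, i.e. $I_p\precsim I_q$ read off on the cofinal subsets, applied to $w\ll w''$ then yields $u\in X_q$ with $w\ll u$, and since $\ll$ implies $\le$ (apply the definition of $\ll$ to the constant sequence), we obtain $w\le u$; finally directedness of $X_q$ produces $v\in X_q\subseteq X$ with $u,w'\le v$. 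Thus $w,w'\le v\in X$, which gives directedness of $J$.

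It then remains to identify $[J]$ as the least upper bound, and both halves follow directly from the definition of $\precsim$ tested on cofinal subsets. For the upper bound, each $X_n$ is contained in $X$, which is cofinal in $J$, so given $x\in X_n$ and $z\ll x$ we may take $x$ itself as the witness in $J$; hence $I_n\precsim J$ and $[I_n]\le[J]$. For minimality, suppose $[I_n]\le[K]$ for all $n$, fix a cofinal subset $Y$ of $K$, and take a generator $w\in X$ together with $z\ll w$; since $w\in X_n$ for some $n$ and $I_n\precsim K$, there is $y\in Y$ with $z\ll y$, so $J\precsim K$ and $[J]\le[K]$. This establishes that $[J]=\sup([I_n])$ with $X$ as a countable cofinal subset, as claimed.
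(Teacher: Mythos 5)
Your proposal is correct and follows essentially the same route as the paper: reduce via Lemma~\ref{lem:rapincr} to rapidly increasing cofinal subsets, form the hereditary set generated by $X=\bigcup_n X_n$, prove upward directedness by chaining rapid increase in $X_p$ with the relation $I_p\precsim I_q$ and directedness of $X_q$, and then verify the least-upper-bound property directly from the definition of $\precsim$ on cofinal subsets. The only cosmetic difference is that you make explicit the observations that $\ll$ implies $\le$ and that $x$ itself witnesses $I_n\precsim J$, which the paper leaves implicit.
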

\begin{proof}
Let $([I_{n}])$ be an increasing sequence in $\overline{M}$, and let $X_n$ be rapidly increasing countable cofinal subsets for each $I_n$ (this is no loss of generality, in view of Lemma \ref{lem:rapincr}, to assume that $X_n$ is rapidly increasing for every $n$). Let $X=\cup_n X_n$ and put $I=\{x\in M\mid x\leq y\text{ for some }y\in X\}$. We are to show that $I\in\Lambda_{\sigma}(M)$ and that $[I]=\sup([I_n])$.

Let $x_1$, $x_2\in I$. Then there are $n$ and $m$ and $y_1\in X_n$ and $y_2\in X_m$ with $x_i\leq y_i$. We may assume that $n\leq m$. Find $y$ in $X_n$ such that $y_1\ll y$, and since $I_n\precsim I_m$, there is $z$ in $X_m$ with $y_1\ll z$. Since $y_2\in X_m$, there exists $w$ in $X_m$ with $z$, $y_2\ll w$. Then $x_i\leq y_i\ll w$ and $w\in X$. This shows that $I$ is upwards directed. As $I$ is clearly hereditary and $X$ is countable (and nonempty), we conclude that $I\in\Lambda_{\sigma}(M)$.

That $I_n\precsim I$ for all $n$ is clear. Suppose that $J\in \Lambda_{\sigma}(M)$ satisfies $I_n\precsim J$. Choose a countable cofinal subset $Y$ for $J$, and let $x\in X$ and $z\ll x$. There is $n$ with $x\in X_n$, so there is $y\in Y$ with $z\ll y$, and this implies $I\precsim J$. Therefore $[I]=\sup [I_n]$.
%Let $(x^{m}_{n})$ be an increasing countable cofinal subset for $I_{n}$ which, in view of Lemma \ref{lem:rapincr}, may be assumed to be rapidly increasing.
%% then for all $n$ and for all $\alpha\ll x^{m}_{n}$ will exist $k$ 
%%such that $\alpha\ll x^{k}_{n+1}$.
%%
%%Now, we will explain a inductive method to find a equivalent sequence of $([I_{n}])$ which will satisfy that $x^{m}_{n}\leq x^{m}_{n+1}$.

%Consider the inequality $[I_{1}]\leq [I_{2}]$. Since $x^{1}_{1}\ll x^{2}_{1}\ll x^{3}_{1}$ and $[I_{1}]\leq[I_{2}]$, there exists $m_{2}$ such that $x^{2}_{1}\ll x^{m_{2}}_{2}$. Using Lemma \ref{erase} if necessary we may assume that $x^{2}_{1}\ll x^{2}_{2}$.

%Continuing in this way (and using Lemma \ref{erase} whenever appropriate), we may assume that $x^{m}_{n}\ll x^{m}_{n+1}$ and $x^{m}_{n}\ll x^{m+1}_{n}$ for every $m$ and every $n$.

%Now, to obtain the supremum of our sequence, let $I=\{x\in M\mid x\leq x^{n}_{n}\text{ for some }n\}$. It is now a simple matter to verify that $[I]$ is an upper bound for $[I_n]$, and is the smallest such.
\end{proof}

\begin{lemma}
\label{lem:natural}
Let $M$ be an object of PreCu. If $x\ll y$ in $M$, then $[I(x)]\ll [I(y)]$ in $\overline M$.
\end{lemma}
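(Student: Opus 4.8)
The plan is to verify the defining property of $\ll$ in $\overline{M}$ directly, using the explicit description of suprema from Proposition~\ref{suprem} to reduce everything to a single application of the relation $\precsim$. So I take an arbitrary increasing sequence $([I_n])$ in $\overline{M}$ with $[I(y)]\leq\sup_n[I_n]$ (such a supremum always exists by Proposition~\ref{suprem}), and I must produce an index $k$ with $[I(x)]\leq[I_k]$. First I choose, for each $n$, a rapidly increasing countable cofinal subset $X_n$ for $I_n$ (legitimate by Lemma~\ref{lem:rapincr}); then Proposition~\ref{suprem} tells me that the interval $I$ generated by $X=\bigcup_n X_n$ represents $\sup_n[I_n]$, so the hypothesis becomes $I(y)\precsim I$.

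The crux of the argument is one well-chosen application of this relation. Since $x\ll y$ and $y\in I(y)$, the pair $(y,x)$ is admissible in the definition of $I(y)\precsim I$, so there is $w\in I$ with $x\ll w$. By construction of $I$, we have $w\leq v$ for some $v\in X$, and as $X=\bigcup_n X_n$ this $v$ lies in some $X_k\subseteq I_k$. Using that $\ll$ refines $\leq$ (apply the definition to a constant sequence), I obtain $x\leq w\leq v$, so that $x\leq v$ with $v\in I_k$.

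It then remains to check that $I(x)\precsim I_k$, i.e. $[I(x)]\leq[I_k]$. Given $a\in I(x)$ and $z\ll a$, I have $a\leq x\leq v$, hence $z\ll v$ by the mixed transitivity observation ($z\ll a\leq v\Rightarrow z\ll v$); since $v\in I_k$, this is exactly what the definition of $I(x)\precsim I_k$ demands. Thus $k$ is the desired index and $[I(x)]\ll[I(y)]$.

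The main difficulty here is bookkeeping rather than any conceptual depth: one must keep straight the two-variable quantifier structure of $\precsim$, invoke the correct one of the two mixed transitivity remarks ($u\leq v\ll w\Rightarrow u\ll w$ versus $u\ll v\leq w\Rightarrow u\ll w$) at each step, and remember the easy but essential fact that $\ll$ implies $\leq$. Once the concrete cofinal description of the supremum supplied by Proposition~\ref{suprem} is in hand, no further subtlety appears, and each verification is a direct unwinding of definitions.
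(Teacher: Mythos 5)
Your proof is correct and follows essentially the same route as the paper's: both reduce to the concrete cofinal description of $\sup[I_n]$ from Proposition~\ref{suprem}, apply the definition of $\precsim$ to the pair $y\in I(y)$, $x\ll y$ to land in some $X_k$, and then verify $I(x)\precsim I_k$ via the mixed transitivity of $\ll$ and $\leq$. The only (immaterial) difference is that the paper extracts the witness directly from the cofinal set $X$, while you first take $w\in I$ and then bound it by $v\in X_k$.
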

\begin{proof}
Let $[I_n]$ be increasing in $\overline{M}$. Choose a rapidly increasing countable cofinal set $X_n$ for $I_n$ and put $X=\cup_n X_n$. If we let $I=\{x\in M \mid x\leq y\text{ for some }y\in X\}$, we know that $[I]=\sup[I_n]$.

Suppose that $[I(y)]\leq [I]$. Since $x\ll y$, there is $z$ in $X$ with $x\ll z$, and $z\in X_n$ for some $n$. Therefore, there is $n$ such that $w\ll z$ whenever $w\ll x$, which shows that $[I(x)]\leq [I_n]$.
\end{proof}

\begin{proposition}\label{rapidament}
Let $M$ be an object in PreCu. Then, every element in $\overline{M}$ is the supremum of a rapidly increasing sequence coming from $M$.
\end{proposition}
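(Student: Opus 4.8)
The plan is to take an arbitrary class $[I]\in\overline M$, replace $I$ by a convenient representative, and then exhibit its defining rapidly increasing cofinal sequence as the sought sequence in $M$. First I would invoke Lemma~\ref{lem:rapincr} to assume, without loss of generality, that $I$ has a rapidly increasing countable cofinal subset; as recorded in the preliminaries of Section~\ref{sec:existence}, such an interval may be written as $I=\{x\in M\mid x\leq x_n\text{ for some }n\}$ for a genuine rapidly increasing sequence $(x_n)$ in $M$. This sequence $(x_n)$ is the candidate, and the claim to establish is that $[I]=\sup\iota(x_n)$.

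Next I would check that $(\iota(x_n))$ is itself rapidly increasing in $\overline M$, so that it is an admissible sequence ``coming from $M$''. Since $x_n\ll x_{n+1}$ for every $n$, Lemma~\ref{lem:natural} gives $\iota(x_n)=[I(x_n)]\ll[I(x_{n+1})]=\iota(x_{n+1})$; in particular the sequence is increasing, so that $\sup\iota(x_n)$ makes sense as a supremum of a rapidly increasing sequence.

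It then remains to verify that $[I]$ is the least upper bound of $(\iota(x_n))$. For the upper bound I would note that $x_n\in I$, whence $I(x_n)\precsim I$: given $a\in I(x_n)$ and $b\ll a$, one has $a\leq x_n$ and therefore $b\ll x_n\in I$ (using that $b\ll a$ and $a\leq x_n$ imply $b\ll x_n$), so $x_n$ witnesses the relation. Thus $\iota(x_n)\leq[I]$ for all $n$. For minimality, suppose $[J]\in\overline M$ satisfies $\iota(x_n)\leq[J]$, i.e.\ $I(x_n)\precsim J$, for every $n$, and I must show $I\precsim J$. Given $x\in I$ and $z\ll x$, cofinality of $(x_n)$ yields $n$ with $x\leq x_n$, and then $z\ll x\leq x_n$ forces $z\ll x_n$. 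Applying $I(x_n)\precsim J$ to the pair $z\ll x_n\in I(x_n)$ produces $c\in J$ with $z\ll c$. This is precisely the defining condition for $I\precsim J$, so $[I]\leq[J]$, and combining with the upper bound statement gives $[I]=\sup\iota(x_n)$.

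The argument is essentially a careful bookkeeping of the relation $\precsim$ against the relation $\ll$, so I do not expect a genuine obstacle; the points that need care are the repeated appeals to the implication ``$z\ll x$ and $x\leq x_n\Rightarrow z\ll x_n$'', and the choice of $x_n$ itself (rather than some element way below it) as the witness inside $I(x_n)$ when unpacking the definition of $I(x_n)\precsim J$. One could alternatively read off the supremum from the explicit cofinal description provided by Proposition~\ref{suprem}, but the direct verification above seems more transparent and avoids having to exhibit rapidly increasing cofinal subsets for each $I(x_n)$.
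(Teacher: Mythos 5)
Your argument is correct and follows essentially the same route as the paper: both reduce to a rapidly increasing cofinal sequence $(x_n)$ via Lemma~\ref{lem:rapincr}, set the candidate approximants to be the classes of the intervals $I(x_n)=[0,x_n]$, obtain the way-below relations from Lemma~\ref{lem:natural}, and verify the least-upper-bound property directly from the definition of $\precsim$. Your write-up is simply a slightly more explicit unpacking of the same verification.
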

\begin{proof}
Let $[I]\in\overline{M}$. We may assume that there is a rapidly increasing sequence $(x_n)$ such that $I=\{x\in M\mid x\leq x_n\text{ for some }n\}$. Consider the sequence $([I^n])$ in $\overline{M}$, where
\[
I^{n}=\{y\in M\mid y\leq x_{n}\}\,.
\]
It follows from Lemma \ref{lem:natural} that $[I^n]\ll [I^{n+1}]$. Thus, to prove the desired result, it suffices to show that $[I]=\sup([I^{n}])$. It is clear from the definition that $[I^{n}]\leq [I]$ for all $n$.
Suppose that $J\in \Lambda_{\sigma}(M)$ has an increasing countable cofinal subset $(y_{m})$ and that $[I^n]\leq [J]$ for all $n$. Given $n$ and $x\ll x_{n}$, we can find $m$ such that $x\ll y_{m}$. Therefore $[I]\leq [J]$, obtaining that $[I]$ is the smallest upper bound of the sequence.
\end{proof}

\begin{remark}
\label{rem:rapidlyincreasing} {\rm The proof in Proposition \ref{rapidament} shows that, given $I$ and $J\in\Lambda_{\sigma}(M)$ with respective rapidly increasing countable cofinal subsets $(x_n)$ and $(y_n)$, then, letting $I^n=[0,x_n]$ and $J^n=[0,y_n]$, we have $[I^n+J^n]$ is rapidly increasing and $\sup ([I^n]+[J^n])=I+J$.}
\end{remark}

\begin{proposition}
Let $M$ be an object of PreCu. Then suprema and $\ll$ are compatible with addition in $\overline{M}$.
\end{proposition}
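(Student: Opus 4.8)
The plan is to establish the two compatibility statements separately, starting with the relation $\ll$, which is the more direct of the two.

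For $\ll$, suppose $[I_1]\ll[J_1]$ and $[I_2]\ll[J_2]$. Using Proposition~\ref{rapidament}, I would pick rapidly increasing cofinal sequences $(a^1_n)$ and $(a^2_n)$ for $J_1$ and $J_2$, so that $[J_i]=\sup_n[I(a^i_n)]$. Since $[I_i]\ll[J_i]$, there is an index $n_i$ with $[I_i]\leq[I(a^i_{n_i})]$; writing $b_i=a^i_{n_i}$ and $c_i=a^i_{n_i+1}$, we have $b_i\ll c_i$ in $M$ and $[I(c_i)]\leq[J_i]$ (as $c_i\in J_i$). Because $\iota$ is additive one has $I(b_1)+I(b_2)=I(b_1+b_2)$ and $I(c_1)+I(c_2)=I(c_1+c_2)$, while compatibility of $\ll$ with addition in $M$ gives $b_1+b_2\ll c_1+c_2$. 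Invoking Lemma~\ref{lem:natural} to pass this last relation to $\overline M$, I would then chain
\[
[I_1]+[I_2]\leq[I(b_1+b_2)]\ll[I(c_1+c_2)]=[I(c_1)]+[I(c_2)]\leq[J_1]+[J_2].
\]
The elementary facts that $x\leq y\ll z$ and $x\ll y\leq z$ each imply $x\ll z$ then yield $[I_1]+[I_2]\ll[J_1]+[J_2]$.

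For suprema, let $([I_n])$ and $([J_n])$ be increasing in $\overline M$ with suprema $A=\sup[I_n]$ and $B=\sup[J_n]$. The inequality $\sup_n([I_n]+[J_n])\leq A+B$ is immediate. The content is the reverse inequality, which I would prove via the explicit cofinal sets of Proposition~\ref{suprem}. Choosing (by Lemma~\ref{lem:rapincr}) rapidly increasing countable cofinal sets $X_n$ for $I_n$ and $Y_n$ for $J_n$, the element $A$ is represented by the interval generated by $X=\bigcup_nX_n$, $B$ by the interval generated by $Y=\bigcup_nY_n$, and $A+B$ by the interval generated by $\{x+y\mid x\in X,\ y\in Y\}$. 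On the diagonal, $Z_n:=\{x+y\mid x\in X_n,\ y\in Y_n\}$ is a rapidly increasing cofinal set for $I_n+J_n$ (using compatibility of $\ll$ and addition in $M$), so Proposition~\ref{suprem} represents $\sup_n([I_n]+[J_n])$ by the interval $W$ generated by $\bigcup_nZ_n$.

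It then remains to show $A+B\precsim W$, and this is the main obstacle: a generator $x+y$ of $A+B$ has its two summands in possibly different levels $x\in X_p$, $y\in Y_q$, whereas $W$ only collects sums from a common level. To reconcile them, given $v\ll x+y$ I would set $n=\max\{p,q\}$, use rapid increase to find $\bar x\in X_p$, $\bar y\in Y_q$ with $x\ll\bar x$ and $y\ll\bar y$, and then apply $I_p\precsim I_n$ to the pair $(\bar x,x)$ and $J_q\precsim J_n$ to the pair $(\bar y,y)$ to obtain $x'\in X_n$, $y'\in Y_n$ with $x\ll x'$ and $y\ll y'$. Compatibility of $\ll$ with addition in $M$ gives $x+y\ll x'+y'$, whence $v\ll x'+y'\in Z_n\subseteq W$. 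This verifies $A+B\precsim W$, so $A+B\leq\sup_n([I_n]+[J_n])$ and equality holds. The only delicate point throughout is this index-matching step; everything else reduces to the compatibility of $\ll$ with addition already available in $M$ together with the transitivity rules relating $\ll$ and $\leq$.
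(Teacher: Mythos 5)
Your proof is correct, and it splits into two halves of different character relative to the paper. For the compatibility of $\ll$ with addition, your argument is essentially the paper's: both pass to the rapidly increasing approximants of the upper elements (your $b_i\ll c_i$ versus the paper's $[J^m+L^m]\ll[J^{m+1}+L^{m+1}]$ from Remark~\ref{rem:rapidlyincreasing}) and sandwich the sum in between. For the compatibility of suprema with addition, your route genuinely differs. The paper writes $A+B=\sup_n([I^n]+[J^n])$ for the rapidly increasing sequence of Remark~\ref{rem:rapidlyincreasing}, uses $[I^n]\ll A=\sup[I_m]$ and $[J^n]\ll B=\sup[J_m]$ to absorb each approximant into some $[I_m]+[J_m]$, and concludes by taking suprema; the whole argument happens in $\overline M$ and leans on the already-established facts that $\iota$ sends $\ll$ to $\ll$ (Lemma~\ref{lem:natural}) and that $[I^n]$ is rapidly increasing. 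You instead descend to the interval representatives supplied by Proposition~\ref{suprem} and verify $A+B\precsim W$ directly, with the index-matching step (reconciling $x\in X_p$, $y\in Y_q$ into a common level $X_n$, $Y_n$ via $I_p\precsim I_n$ and $J_q\precsim J_n$) doing the work that the paper delegates to the way-below relation in $\overline M$. Your version is more hands-on and makes visible exactly where the diagonal sequence $Z_n$ suffices to generate the supremum of the sums, at the cost of redoing by hand some bookkeeping the paper has already packaged; the paper's version is shorter but less explicit about the interval-level combinatorics. Both arguments are complete; the only points worth double-checking in yours, namely that $\ll$ implies $\leq$ (via constant sequences) so that $v\ll x+y\ll x'+y'$ yields $v\ll x'+y'$, and that $\precsim$ may be tested on cofinal subsets, are both covered by remarks already made in the paper.
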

\begin{proof}
Let $([I_{n}])$, $([J_{n}])$ be two increasing sequences in $\overline{M}$, and let
\[
[I]=\sup([I_{n}])\text{  and  }[J]=\sup([J_{n}])\,.
\]
Choose rapidly increasing sequences $([I^n])$ and $([J^n])$ such that $[I+J]=\sup ([I^n+J^n])$, and $[I]=\sup [I^n]$, $[J]=\sup [J^n]$ (see Remark \ref{rem:rapidlyincreasing}). We can then find $m$ with $[I^n]\leq [I_m]$ and $[J^n]\leq [J_m]$. Thus:
\[
[I^{n}+J^{n}]=[I^{n}]+[J^{n}]\leq [I_{m}]+[J_{m}]\leq [I]+[J]\,,
\]
whence
\[
[I]+[J]=\sup ([I^n]+[J^n])\leq\sup([I_{n}+J_{n}])\leq [I]+[J]\,.
\]

To prove that $\ll$ and addition are compatible we have to check that if $[I]\ll [J]$ and $[K]\ll [L]$ then $[I+K]\ll [J+L]$. If we write $[J]=\sup([J^{n}])$ and $[L]=\sup([L^{n}])$, where $J^n$ and $L^n$ are constructed as in Proposition \ref{rapidament} (see also Remark \ref{rem:rapidlyincreasing}), so that the corresponding rapidly increasing sequence for $[J+L]$ will be $[J^n+L^n]$. Using that  $[I]\ll [J]$ and $[K]\ll [L]$, we find $m$ with $[I]\leq [J^{m}]$ and $[K]\leq[L^{m}]$. Therefore
\[
[I]+[K]\leq [J^{m}+L^{m}]\ll [J^{m+1}+L^{m+1}]\leq [J]+[L]\,,
\]
as desired.
\end{proof}

Collecting the results above we, we obtain:

\begin{theorem}\label{thm:completion} Let $M$ be an object of PreCu. Then $\overline{M}$ is an object of Cu,
\[\begin{tabular}{ c c c }
$\iota\colon M$ & $\longrightarrow$ & $\overline{M}$\\
$\,\,\, x$ & $\mapsto$ & $[I(x)]$, \\
\end{tabular}
\] (where $I(x)=[0,x]$) is an order-embedding in PreCu
and $(\overline{M},\iota)$ is the completion of $M$.
\end{theorem}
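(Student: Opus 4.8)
The plan is to treat this theorem as the synthesis of the whole section: nearly every clause has already been established separately, so most of the proof is citation and bookkeeping, with a single genuinely new verification at the end. I would first record that $\overline M$ is a partially ordered abelian monoid by Proposition~\ref{propo}. The two defining properties of Definition~\ref{def:precu} then follow at once: condition~(i), that every element is the supremum of a rapidly increasing sequence, is Proposition~\ref{rapidament}; condition~(ii), compatibility of $\ll$ and of suprema with addition, is exactly the proposition immediately preceding this theorem. Since in addition every increasing sequence in $\overline M$ has a supremum by Proposition~\ref{suprem}, the extra hypothesis of Definition~\ref{def:cu} holds and $\overline M$ is an object of Cu. This settles the first assertion and clause~(i) of Definition~\ref{dfn:completion}.

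For the map $\iota$, I would recall that the discussion following Proposition~\ref{propo} shows $\iota$ to be additive, order-preserving and order-reflecting, hence an order-embedding, while Lemma~\ref{lem:natural} shows $x\ll y$ implies $\iota(x)\ll\iota(y)$. Clause~(iii) of Definition~\ref{dfn:completion}, that each element of $\overline M$ is $\sup\iota(x_n)$ for some rapidly increasing $(x_n)$ in $M$, is again Proposition~\ref{rapidament}. The only point not yet on record, and the heart of the proof, is that $\iota$ is a bona fide morphism of PreCu: it must preserve suprema of those increasing sequences of $M$ that already admit a supremum in $M$, which is what is needed to call it an order-embedding \emph{in} PreCu.

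To prove this last point, fix an increasing sequence $(x_n)$ in $M$ with $x=\sup_M x_n$. One inequality is free: $x_n\le x$ gives $\sup\iota(x_n)\le\iota(x)$. For the converse I would use Proposition~\ref{suprem} to realize $\sup\iota(x_n)$ as $[I]$, where $I$ is generated by $X=\bigcup_n X_n$, each $X_n=\{x_n^m\}_m$ being a rapidly increasing cofinal set for $[0,x_n]$ with $x_n=\sup_m x_n^m$; by the construction in Proposition~\ref{suprem} this $X$ is itself rapidly increasing and cofinal in $I$. It then suffices to verify $I(x)\precsim I$. So take $a\le x$ and $z\ll a$; extracting from the countable directed set $X$ an increasing cofinal sequence $(w_k)$, whose supremum is $x$, the relation $z\ll a\le x=\sup_k w_k$ and the definition of $\ll$ produce $K$ with $z\le w_K\in X$. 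Since $X$ is rapidly increasing there is $w'\in X$ with $w_K\ll w'$, and the elementary implication recorded after the definition of $\ll$ (if $z\le w_K$ and $w_K\ll w'$ then $z\ll w'$) yields $z\ll w'\in I$. This is precisely the witness demanded by the definition of $\precsim$, so $I(x)\precsim I$ and hence $\iota(x)=\sup\iota(x_n)$.

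I expect this sup-preservation to be the only real obstacle. Its subtlety is that an arbitrary increasing sequence $(x_n)$ need not be rapidly increasing, so one cannot feed it directly into the definition of $\ll$; the remedy is to pass to the rapidly increasing approximants $x_n^m$, against whose cofinal enumeration $\ll$ does apply, and then to upgrade the resulting inequality $z\le w_K$ to the way-below relation $z\ll w'$ using the rapid increase of $X$. Once $\iota$ is known to preserve suprema it is an order-embedding in PreCu, all three clauses of Definition~\ref{dfn:completion} are in place, and the theorem follows by assembling the cited results.
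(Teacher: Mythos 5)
Your proposal is correct and follows essentially the same route as the paper: assemble the preceding propositions and then verify by hand that $\iota$ preserves existing suprema, using rapidly increasing approximants so that the relation $\ll$ can be applied. The only cosmetic difference is that the paper works with the interval generated by the $x_n$ themselves and checks the least-upper-bound property directly against an arbitrary $J$, whereas you invoke Proposition~\ref{suprem} for the explicit representative of $\sup\iota(x_n)$ and then prove the single inequality $I(x)\precsim I$; both arguments hinge on the same mechanism.
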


\begin{proof}
It is clear by the preceding results that $\overline M$ is an object of Cu, and we already know that $\iota$ is an order-embedding that preserves $\ll$ (see Lemma \ref{lem:natural}).

Let $(x_n)$ be an increasing sequence in $M$ with $x=\sup(x_{n})$ in $M$, and let $I=\{x\in M\mid x\leq x_n\text{ for some }n\}$. We first show that $\iota(x)=[I]$. It is clear that $I\precsim I(x)$. Write $x=\sup z_n$, where $(z_n)$ is rapidly increasing in $M$. Thus, if $y\ll x$, then there is $n$ such that $y\ll z_n$, so that $y\leq x_m$ for some $m$. This shows that $I(x)\precsim I$.

Now, let $J\in\Lambda_{\sigma}(M)$ and choose an increasing countable cofinal subset $(y_n)$ for $J$. If $[0,x_n]\precsim J$ for all $n$, we get that for any $n$ and $x\ll x_n$, there is $m$ with $x\ll y_m$, so that $I\precsim J$. It follows that $\iota(x)=\sup\iota(x_n)$. 

This, together with Proposition~\ref{rapidament}, shows that $(\overline{M},\iota)$ is the completion of $M$.
\end{proof}

\begin{corollary}
\label{cor:completion}
Let $M$ be an object of PreCu such that every element is compact. Then $\overline{M}\cong\Lambda_{\sigma}(M)$.
\end{corollary}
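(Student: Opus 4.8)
The plan is to show that the blanket compactness hypothesis forces the way-below relation to coincide with the order on $M$, which in turn collapses the congruence $\sim$ on $\Lambda_\sigma(M)$ to ordinary equality of intervals; since $\overline M$ is by definition $\Lambda_\sigma(M)/\!\sim$, the quotient map will then be the desired isomorphism.

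First I would record the elementary fact that, when every $x\in M$ is compact, one has $x\ll y$ if and only if $x\leq y$. The implication $x\ll y\Rightarrow x\leq y$ holds in any object of PreCu and follows by testing the definition of $\ll$ against the constant sequence $y_n=y$, whose supremum is $y$. For the converse I would invoke compactness $y\ll y$ together with the observation recorded just after the definition of $\ll$ (if $a\leq b$ and $b\ll c$ then $a\ll c$): applying it with $b=c=y$ and $a=x\leq y$ gives $x\ll y$.

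Next I would translate this into the statement that, for $I,J\in\Lambda_\sigma(M)$, one has $I\precsim J$ if and only if $I\subseteq J$. By the previous step, the defining condition of $I\precsim J$ reads: for every $x\in I$ and every $z\leq x$ there is $y\in J$ with $z\leq y$. Since intervals are order-hereditary, the existence of such a $y$ is equivalent to $z\in J$, and as $z\leq x$ ranges over $x\in I$ it sweeps out exactly $I$; hence the condition is precisely $I\subseteq J$. The reverse implication is immediate, taking $y=x$. Consequently $\sim$, being the symmetrization of $\precsim$, satisfies $I\sim J\iff(I\subseteq J\text{ and }J\subseteq I)\iff I=J$, so every $\sim$-class is a singleton. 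The quotient map $\Lambda_\sigma(M)\to\overline M$, $I\mapsto[I]$, is therefore a bijection; since the order on $\overline M$ is induced by $\precsim\,=\,\subseteq$ and the operation by $+$ on $\Lambda_\sigma(M)$, it is an isomorphism of partially ordered monoids, giving $\overline M\cong\Lambda_\sigma(M)$.

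I do not expect a serious obstacle here: once $\ll$ and $\leq$ are identified on $M$, the argument is essentially bookkeeping. The one point deserving care is the forward direction $I\precsim J\Rightarrow I\subseteq J$, where I must ensure the condition captures \emph{all} of $I$ rather than merely a cofinal part; this is exactly where hereditariness of intervals (and the choice $x=z$, legitimate because $z\ll z$ by compactness) is used. I would also remark that restricting to countably generated intervals plays no role in this corollary, as the compactness hypothesis is insensitive to countable generation.
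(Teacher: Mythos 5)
Your proposal is correct and follows essentially the same route as the paper: the paper's proof is precisely the observation that when every element is compact the relation $\precsim$ on $\Lambda_{\sigma}(M)$ reduces to inclusion, so the congruence $\sim$ is equality and the quotient $\overline{M}=\Lambda_{\sigma}(M)/\!\sim$ is $\Lambda_{\sigma}(M)$ itself. You have simply written out the details (the identification of $\ll$ with $\leq$ and the use of hereditariness of intervals) that the paper leaves implicit.
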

\begin{proof}
This follows directly from Theorem \ref{thm:completion} and the fact that, if every element in $M$ is compact, the relation $\precsim$ defined in $\Lambda_{\sigma}(M)$ reduces to inclusion.
\end{proof}

\section{Some applications}\label{sec:applications}

Our first application of Theorem \ref{thm:completion} relates the Cuntz semigroup of $A$ with that of $A\otimes\mathcal K$ in the case that $\W(A)$ is hereditary. 

\begin{theorem}
\label{thm:wacompletion}
Let $A$ be a C$^*$-algebra with $\W(A)$ hereditary. Then the pair $\W(A)$ is in PreCu and $(\W(A\otimes\mathcal{K}),\iota)$ is the completion of $\W(A)$.
\end{theorem}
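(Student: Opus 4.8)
The plan is to verify directly the three conditions of Definition~\ref{dfn:completion} for the pair $(\W(A\otimes\mathcal{K}),\iota)$. Conditions (i) and (ii) are essentially already in hand: $\W(A\otimes\mathcal{K})$ is an object of Cu by the results of \cite{CEI}, and Proposition~\ref{prop:wainprecu} (which applies precisely because $\W(A)$ is hereditary) gives both that $\W(A)$ is an object of PreCu and that $\iota\colon\W(A)\to\W(A\otimes\mathcal{K})$ is an order-embedding in PreCu. So the real content lies in condition (iii): every $x\in\W(A\otimes\mathcal{K})$ must be expressible as $\sup\iota(x_n)$ for some rapidly increasing sequence $(x_n)$ in $\W(A)$.

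To produce such a sequence, I would first write $x=\langle a\rangle$ with $a\in(A\otimes\mathcal{K})_+$, which is legitimate since $A\otimes\mathcal{K}$ is stable, so every class in $\W(A\otimes\mathcal{K})$ is represented by a single element of $(A\otimes\mathcal{K})_+$. Then I would recycle the opening of the proof of Lemma~\ref{lem:sr1her}: using that $M_\infty(A)$ is dense in $A\otimes\mathcal{K}$, choose $a_n\in M_\infty(A)_+$ with $\|a-a_n\|\le 1/n$ and, via \cite[Lemma 2.2]{kiror}, contractions $d_n$ with $(a-1/n)_+=d_na_nd_n^*$, so that $(a-1/n)_+\sim b_n$ with $b_n:=a_n^{1/2}d_n^*d_na_n^{1/2}\in M_\infty(A)_+$. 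This portion of that argument uses neither $\mathrm{sr}(A)=1$ nor any comparison hypothesis. Setting $x_n:=\langle b_n\rangle=\langle(a-1/n)_+\rangle\in\W(A)$, the standard facts in Cu give $x=\langle a\rangle=\sup x_n$ in $\W(A\otimes\mathcal{K})$; moreover the identity $(a-1/n)_+=\bigl((a-1/(n+1))_+-(1/n-1/(n+1))\bigr)_+$ yields $\langle(a-1/n)_+\rangle\ll\langle(a-1/(n+1))_+\rangle$, so $(x_n)$ is rapidly increasing in $\W(A\otimes\mathcal{K})$.

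The one remaining point, and the only genuine obstacle, is that condition (iii) requires $(x_n)$ to be rapidly increasing in $\W(A)$ itself, whereas the construction so far only delivers rapid increase in the stabilization. This is exactly where the hereditary hypothesis does its work: since $\iota$ is an order-embedding in PreCu, Lemma~\ref{orderembedding} guarantees that $\iota(x_n)\ll\iota(x_{n+1})$ in $\W(A\otimes\mathcal{K})$ if and only if $x_n\ll x_{n+1}$ in $\W(A)$. As $\iota$ is merely the inclusion, this transfers the rapid increase back to $\W(A)$, so $(x_n)$ is rapidly increasing there. Finally $\sup\iota(x_n)=\sup\langle b_n\rangle=\langle a\rangle=x$, which establishes condition (iii). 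Assembling the three conditions completes the verification that $(\W(A\otimes\mathcal{K}),\iota)$ is the completion of $\W(A)$.
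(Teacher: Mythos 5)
Your proposal is correct and follows essentially the same route as the paper: conditions (i) and (ii) are delegated to \cite{CEI} and Proposition~\ref{prop:wainprecu}, and condition (iii) is obtained by recycling the approximation argument from the proof of Lemma~\ref{lem:sr1her} to produce the sequence $\langle b_n\rangle$ with $b_n\in M_\infty(A)_+$ and $\langle a\rangle=\sup\langle b_n\rangle$. The only difference is cosmetic: where the paper simply asserts that the sequence is rapidly increasing in $\W(A)$ ``since $\W(A)$ is hereditary,'' you make this explicit by invoking Lemma~\ref{orderembedding} to transfer the relation $\ll$ back through the order-embedding $\iota$, which is a legitimate (and slightly more detailed) justification of the same step.
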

\begin{proof}
We need to verify that $\W(A)$ and $(\W(A\otimes\mathcal{K}),\iota)$ satisfy the conditions of Definition \ref{dfn:completion}, and then invoke Theorem \ref{thm:completion}. 

We have already proved in Proposition \ref{prop:wainprecu} that, under our assumptions, $\W(A)$ is an object of PreCu and that $\iota$ is an order-embedding in PreCu. 

Also, as in the proof of Lemma~\ref{lem:sr1her}, if $a\in (A\otimes\mathcal{K})_{+}$, we obtain a sequence $\langle b_n\rangle$  with $b_n \in M_{\infty}(A)$  which is rapidly increasing 
(both in $\W(A)$ and $\W(A\otimes\mathcal{K})$ since $\W(A)$ is hereditary) and such that $\langle a\rangle=\sup \langle b_n\rangle$ in $\W(A\otimes\mathcal{K})$.
\end{proof}

It is well known that if a simple unital C$^*$-algebra $A$ has strict comparison if and only if $\W(A)$ is \emph{almost unperforated}, that is, whenever $(n+1)x\leq ny$ ($x$, $y\in \W(A)$, $n\in \mathbb{N}$), one has $x\leq y$ (see \cite{rorfunct}). A related property that a partially ordered monoid $M$ might satisfy is that of being \emph{almost divisible}. This means that, for any $x$ in $M$ and any $n\in\mathbb{N}$, there is $y$ in $M$ such that $ny\leq x\leq (n+1)y$.

Recall that a C$^*$-algebra $A$ is $\mathcal{Z}$-stable if it absorbs $\mathcal{Z}$ tensorially, that is, $A\otimes\mathcal{Z}\cong A$, where $\mathcal{Z}$ is the Jiang-Su algebra (\cite{jiangsu}). Any $\mathcal{Z}$-stable algebra $A$ has the properties that $\W(A)$ is almost unperforated and almost divisible (see \cite{rorijm}, \cite{bpt}, \cite{kngper}). This is also the case for every simple, unital AH-algebra with slow dimension growth (\cite{bpt}, \cite{tomsplms}). 

For the class of simple, separable, finite, exact, unital C$^*$-algebras with strict comparison and such that $\W(A)$ is almost divisible, a precise description of $\W(A)$ was given in \cite{bpt} (see also \cite{aptsantander}), by means of data already contained in the Elliott invariat (i.e. $\mathrm{K}$-Theory and traces). Moreover, the recovery of one from the other has a functorial nature (see \cite{pt}).

More specifically, consider the set $\V(A)\sqcup \mathrm{LAff}(\T(A))^{++}$. Equip it with an abelian monoid structure and a partial order, as follows. Addition extends the natural operations in both $\V(A)$ and $\mathrm{LAff}(\T(A))^{++}$, and is defined on mixed terms as $[p]+f=\hat{p}+f$ where $\hat{p}(\tau)=\tau(p)$.
The order is given by:
\begin{enumerate}[{\rm(i)}]
\item $\leq$ is compatible with the natural order defined in $\V(A)$;
\item if $f,g\in\mathrm{LAff}(\T(A))^{++}$ we will say that
$f\leq g$ if $f(\tau)\leq g(\tau)$ for all
$\tau\in \T(A)$; 
\item If $f\in\mathrm{LAff}(\T(A))^{++}$ and
$[p]\in \V(A)$ we will say that $f\leq [p]$ if $f(\tau)\leq \tau(p)$ for all $\tau\in \T(A)$;
\item If $f\in\mathrm{LAff}(\T(A))^{++}$
and $[p]\in \V(A)$ we will say $[p]\leq f$ if $\tau(p)<f(\tau)$ for all $\tau\in \T(A)$.
\end{enumerate}

The set $\V(A)\sqcup\mathrm{LAff}_b(\T(A))^{++}$ naturally inherits the structure and order just defined, so that the natural inclusion $\iota\colon \V(A)\sqcup\mathrm{LAff}_b(\T(A))^{++}\to \V(A)\sqcup\mathrm{LAff}(\T(A))^{++}$ is an order-embedding.

\begin{theorem}
\label{thm:bpt} {\rm (cf. \cite{bpt}, \cite{aptsantander})}
Let $A$ be a simple, unital, separable, exact C$^*$-algebra. Assume that $A$ is finite, has strict comparison and $\W(A)$ is almost divisible. Then, there is an ordered monoid isomorphism
\[
\W(A)\cong \V(A)\sqcup \mathrm{LAff}_{b}(\T(A))^{++}\,,
\]
such that $\langle p\rangle\mapsto [p]$, for a projection $p$, and $\langle a\rangle\mapsto\hat{a}$, for $a$ not equivalent to a projection.
\end{theorem}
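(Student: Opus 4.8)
The plan is to construct the stated isomorphism by hand and check its properties, with the real work concentrated in surjectivity. Define $\Phi\colon \W(A)\to \V(A)\sqcup\mathrm{LAff}_b(\T(A))^{++}$ by $\Phi(\langle p\rangle)=[p]$ when $\langle a\rangle$ is the class of a projection $p$, and $\Phi(\langle a\rangle)=\hat a$, with $\hat a(\tau)=d_\tau(a)$, when $a$ is not Cuntz equivalent to a projection (I call such $a$ \emph{purely positive}). Since $A$ is simple and finite, Cuntz equivalent projections are Murray--von Neumann equivalent, so $\Phi$ is well defined on projection classes; and $d_\tau$ is a Cuntz invariant, so $\Phi$ is well defined on purely positive classes. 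For purely positive $a\ne 0$ the function $\hat a$ is affine and lower semicontinuous, bounded (by $k$ if $a\in M_k(A)_+$), and strictly positive since every trace on a simple unital algebra is faithful; hence $\hat a\in\mathrm{LAff}_b(\T(A))^{++}$ and $\Phi$ lands in the correct summand. The recurring tool will be the following \emph{strictness observation}: if $A$ is simple and $a$ is purely positive, then $d_\tau\big((a-\epsilon)_+\big)<d_\tau(a)$ for every $\tau$ and every $\epsilon>0$. Indeed the difference equals the spectral mass $\mu^a_\tau((0,\epsilon])$, and since $a$ is not equivalent to a projection, $0$ is a non-isolated point of $\mathrm{sp}(a)$, so there is a nonzero positive function of $a$ supported in $(0,\epsilon)$, which has strictly positive trace by faithfulness.

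Additivity reduces to $d_\tau(a\oplus b)=d_\tau(a)+d_\tau(b)$ and $[p\oplus q]=[p]+[q]$, together with the fact that a sum is purely positive as soon as one summand is: if $\langle p\oplus a\rangle$ or $\langle a\oplus b\rangle$ were a projection class, its compactness plus the strictness observation would force the rank to drop strictly while staying equal, a contradiction. The mixed value $\hat p+\hat a$ matches the declared rule $[p]+f=\hat p+f$. For the order I treat four cases, using strict comparison in one direction and the strictness observation in the other. On projection classes, $\langle p\rangle\le\langle q\rangle \Leftrightarrow [p]\le[q]$ by finiteness. If $a,b$ are purely positive and $\hat a\le\hat b$, then for each $\epsilon$ one has $d_\tau((a-\epsilon)_+)<d_\tau(a)\le d_\tau(b)$ for all $\tau$, so $(a-\epsilon)_+\precsim b$ by strict comparison and hence $\langle a\rangle=\sup_\epsilon\langle(a-\epsilon)_+\rangle\le\langle b\rangle$; the converse is monotonicity of $d_\tau$. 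The mixed cases run identically and reproduce exactly the asymmetry in the target order: $\langle a\rangle\le\langle p\rangle$ becomes $d_\tau(a)\le\tau(p)$ (non-strict, rule (iii)), whereas $\langle p\rangle\le\langle a\rangle$ becomes $\tau(p)<d_\tau(a)$ (strict, rule (iv)), the strictness appearing precisely because $\langle p\rangle$ is compact, so $\langle p\rangle\le\langle a\rangle$ forces $\langle p\rangle\le\langle(a-\epsilon)_+\rangle$ for some $\epsilon$ and the strictness observation applies. This proves $\Phi$ is an order-embedding.

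Surjectivity onto $\V(A)$ is immediate, so the point is surjectivity onto $\mathrm{LAff}_b(\T(A))^{++}$. I would reduce this to realizing \emph{continuous} functions. As $A$ is separable, $\T(A)$ is a metrizable Choquet simplex, so each $f\in\mathrm{LAff}_b(\T(A))^{++}$ is the pointwise supremum of a strictly increasing sequence $(g_n)$ in $\mathrm{Aff}(\T(A))^{++}$. Granting that each $g_n$ is realized, at least approximately from below, as a rank $\widehat{b_n}$ with $b_n\in M_\infty(A)_+$ (arranged so that $\widehat{b_n}\nearrow f$), strict comparison makes $(\langle b_n\rangle)$ increasing and bounded above by $\langle 1_k\rangle$ for any integer $k>\sup f$. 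Since strict comparison gives radius of comparison $0$, Theorem~\ref{thm:rcher} shows $\W(A)$ is hereditary, so bounded increasing sequences have suprema in $\W(A)$; thus $\langle a\rangle=\sup_n\langle b_n\rangle$ exists. Because these suprema agree with those in $\Cu(A)$, where dimension functions preserve suprema of increasing sequences, we get $\hat a=\sup_n g_n=f$, and $a$ is purely positive since a strictly increasing sequence cannot have a compact supremum. Hence $\Phi(\langle a\rangle)=f$.

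The main obstacle is this realization step: producing, for a continuous strictly positive affine function $g$, an element of $M_\infty(A)_+$ whose rank function approximates $g$ from below. This is exactly where \emph{almost divisibility} is essential: it lets one subdivide an already-realized rank into $n$ nearly equal summands, so the semigroup of realizable ranks becomes divisible enough that, combined with strict comparison and the abundance of positive elements in the simple algebra $A$, its closure from below exhausts $\mathrm{LAff}_b(\T(A))^{++}$. Making this precise is the technical heart of \cite{bpt} and \cite{aptsantander}; I would either carry out that construction or invoke it directly. Everything else above is elementary once one has strict comparison, faithfulness of traces, and the hereditariness of $\W(A)$.
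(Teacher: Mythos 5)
The paper does not prove this theorem: it is imported verbatim from the literature with a ``cf.~\cite{bpt}, \cite{aptsantander}'' attribution, so there is no in-paper argument to compare against. Judged on its own, your reconstruction follows the same route as the cited sources and the parts you actually carry out are sound. The ``strictness observation'' $d_\tau((a-\epsilon)_+)<d_\tau(a)$ for purely positive $a$ is the right engine: it correctly produces the asymmetry between rules (iii) and (iv) in the target order (compactness of $\langle p\rangle$ forcing $\langle p\rangle\le\langle(a-\epsilon)_+\rangle$, hence the strict inequality $\tau(p)<d_\tau(a)$), and combined with strict comparison and the fact that $(a-\epsilon)_+\precsim b$ for all $\epsilon$ implies $a\precsim b$, it gives the order-embedding in all four cases. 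Two small points you glide over: Cuntz equivalence of projections gives Murray--von Neumann equivalence under \emph{stable} finiteness rather than finiteness per se (standard to arrange in this setting, but worth a word), and your passage from strict comparison to radius of comparison $0$ (to invoke Theorem~\ref{thm:rcher} for hereditariness) uses that every lower semicontinuous dimension function on an exact algebra comes from a trace, which deserves explicit mention since Definition~\ref{def:rc} quantifies over $\mathrm{LDF}(A)$, not $\T(A)$.

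The one genuine gap is the one you name yourself: the realization of continuous strictly positive affine functions as rank functions $\widehat{b}$, approximately from below. That step is not a technicality to be outsourced casually --- it is the entire content of the theorem beyond formal bookkeeping, and it is the only place where almost divisibility is used at all; without it you have only an order-embedding of $\W(A)$ into $\V(A)\sqcup\mathrm{LAff}_b(\T(A))^{++}$, which holds under strictly weaker hypotheses. Since the paper itself offers no proof, deferring this to \cite{bpt} and \cite{aptsantander} is defensible, but a self-contained write-up would need the construction: starting from elements with small rank (obtained from almost divisibility applied to $\langle 1\rangle$), building finite sums whose rank functions approximate a given $g\in\mathrm{Aff}(\T(A))^{++}$ from below uniformly, and then running your supremum argument. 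Everything after that point in your sketch (boundedness of the sequence, existence of the supremum via hereditariness, agreement of suprema with $\Cu(A)$ where $d_\tau$ is supremum-preserving, and pure positivity of the limit) is correct.
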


The following lemma is known. Its proof uses Edwards' separation Theorem (see, e.g. \cite[Theorem 11.12]{poag}).
\begin{lemma}\label{LAff2}
Let $K$ be a metrizable Choquet simplex and $g\in\mathrm{LAff}(K)^{++}$. Then $g$ is the pointwise supremum of a strictly increasing sequence $(f_{n})$, where $f_{n}\in \mathrm{Aff}(K)^{++}$ for all $n$.
\end{lemma}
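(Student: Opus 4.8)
The plan is to show that the continuous, strictly positive affine functions lying below $g$ are \emph{sup-dense} and \emph{upward directed}, to extract from them a countable increasing cofinal sequence using metrizability, and finally to perturb this sequence so as to make it strictly increasing. Throughout I would use that, since $g$ is lower semicontinuous on the compact set $K$ and takes values in $\mathbb{R}^{++}\cup\{\infty\}$, it attains a strictly positive minimum $m=\min_{K}g>0$; in particular $g\ge m$.

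First I would prove the pointwise sup-density: for every $x_{0}\in K$ and every real $r<g(x_{0})$ there is $f\in\mathrm{Aff}(K)^{++}$ with $f\le g$ and $f(x_{0})>r$. This is where Edwards' separation theorem enters. Viewing $g$ as a lower semicontinuous convex function, I would construct an upper semicontinuous concave minorant $u\le g$ with $u(x_{0})\ge r$ that is bounded below by a positive constant (for instance as the upper semicontinuous concave envelope of the function equal to $r$ at $x_{0}$ and to a small positive constant elsewhere, truncated so as to stay below $g$; here one uses that $g$ is affine, so that $g(x)=\lambda g(x_{0})+(1-\lambda)g(y)\ge\lambda r$ along any decomposition $x=\lambda x_{0}+(1-\lambda)y$). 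Edwards' theorem then yields a continuous affine $f$ with $u\le f\le g$, which is therefore strictly positive and exceeds $r$ at $x_{0}$. Since this works for arbitrarily large $r$ at points where $g(x_{0})=\infty$, the $\infty$-values are handled with no separate truncation argument, and we obtain $g=\sup\{f\in\mathrm{Aff}(K)^{++}:f\le g\}$ pointwise.

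Next I would pass to a countable sequence and make it increasing. Because $K$ is metrizable it is second countable, so for each rational $q$ the open cover $\{g>q\}=\bigcup_{f\le g}\{f>q\}$ admits a countable subcover; collecting these over all rationals produces a countable family $\{h_{n}\}\subseteq\mathrm{Aff}(K)^{++}$ with $h_{n}\le g$ and $\sup_{n}h_{n}=g$ pointwise. To convert this into a monotone sequence I would invoke the upward directedness of $\mathcal{F}=\{f\in\mathrm{Aff}(K):f\le g\}$: given $f_{1},f_{2}\in\mathcal{F}$, the concave envelope $w$ of $\max(f_{1},f_{2})$ satisfies $\max(f_{1},f_{2})\le w\le g$, the bound $w\le g$ holding because $g$ is affine and dominates $f_{1},f_{2}$; a further application of Edwards' theorem (again the simplex hypothesis is essential here) returns a continuous affine $f_{3}\in\mathcal{F}$ dominating both. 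Choosing inductively $f_{n}\in\mathcal{F}$ with $f_{n}\ge f_{n-1}$ and $f_{n}\ge h_{n}$ then gives an increasing sequence in $\mathrm{Aff}(K)^{++}$ with $\sup_{n}f_{n}=g$.

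Finally I would arrange strict monotonicity while preserving both strict positivity and the supremum. Choosing constants $\delta_{n}\downarrow 0$ strictly with $0<\delta_{n}<m$ and setting $\tilde f_{n}=f_{n}-\delta_{n}$, each $\tilde f_{n}$ is continuous affine with $\tilde f_{n}\ge m-\delta_{n}>0$, and $\tilde f_{n+1}-\tilde f_{n}=(f_{n+1}-f_{n})+(\delta_{n}-\delta_{n+1})\ge\delta_{n}-\delta_{n+1}>0$ everywhere, so $(\tilde f_{n})$ is strictly increasing in $\mathrm{Aff}(K)^{++}$; moreover $\tilde f_{n}\uparrow g$ since $\delta_{n}\to 0$. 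I expect the main obstacle to be the two appeals to Edwards' separation theorem, and in particular the directedness step: because the pointwise maximum (or minimum) of affine functions is no longer affine, one cannot simply take running maxima, and it is precisely the simplex structure, through Edwards' theorem together with the concave-envelope estimates and their lower/upper semicontinuity bookkeeping, that allows one to ``re-affinize'' and thereby produce the monotone sequence.
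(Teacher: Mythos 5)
The paper does not actually prove this lemma: it records it as known and points to \cite[Theorem 11.12]{poag}, noting only that the proof uses Edwards' separation theorem. Your overall architecture --- sup-density of the continuous affine minorants of $g$, upward directedness of that family via Edwards, a second-countability/Lindel\"of extraction of a countable cofinal subfamily, and a final perturbation by constants $\delta_n\downarrow 0$ to force strict monotonicity --- is exactly the standard route behind that citation, and the directedness step (Edwards applied to the convex continuous function $\max(f_1,f_2)$ below the concave lower semicontinuous $g$), the countable extraction, and the perturbation scheme are all essentially right.

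The genuine gap is in your sup-density step. Edwards' theorem separates a \emph{convex} upper semicontinuous function lying \emph{below} a \emph{concave} lower semicontinuous one; the roles are not symmetric. The minorant you feed into it --- the upper semicontinuous \emph{concave} envelope $w$ of the bump equal to $r$ at $x_0$ and to a small constant $\epsilon$ elsewhere --- is concave, not convex, so Edwards does not apply to the pair $(w,g)$; and the bump itself is not convex unless $x_0$ is extreme (if $x_0=\lambda x+(1-\lambda)y$ nontrivially, convexity would force $r\le\epsilon$). Your verification that $w\le g$ is fine, but the step ``Edwards' theorem then yields a continuous affine $f$ with $u\le f\le g$'' is exactly where the hypotheses fail. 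The standard repair avoids Edwards here entirely: the point $(x_0,r)$ with $r<g(x_0)$ lies outside the closed convex epigraph $\{(x,t)\mid x\in K,\ t\ge g(x)\}$, so Hahn--Banach separation (no simplex hypothesis needed) yields a continuous affine $h\le g$ with $h(x_0)>r$; strict positivity is then recovered by one application of your correct directedness step to $h$ and the constant function $\tfrac12\min_K g$. A second, minor, slip: at the end you assert $\tilde f_n\ge m-\delta_n$ with $m=\min_K g$, but $f_n\le g$ gives no lower bound by $m$; you only know $f_n\ge\min_K f_1>0$, so the $\delta_n$ must be chosen below $\min_K f_1$ (or a uniform positive lower bound must be built into the family). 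Both defects are reparable, but as written the first one invalidates the key separation step.
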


Let $f$ and $g$ be affine functions on  a convex set $K$. We write $f<g$ to mean 
$f(x)<g(x)$ for every $x$ in $K$.

\begin{lemma}\label{LAff3}
Let $K$ be a compact convex set, and let $f$, $g\in\mathrm{LAff}(K)^{++}$. 
\begin{enumerate}[\rm(i)]
\item If $f\ll g$, then $f< g$.
\item If $f<g$ and $f$ is continuous, then $f\ll g$.
\end{enumerate}
\end{lemma}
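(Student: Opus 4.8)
The plan is to verify each implication by testing the way-below relation against carefully chosen increasing sequences in $\mathrm{LAff}(K)^{++}$, recalling that the order there is pointwise and that suprema of increasing sequences are computed pointwise (the pointwise supremum of an increasing sequence of affine lower semicontinuous functions is again affine and lower semicontinuous, hence lies in $\mathrm{LAff}(K)^{++}$).

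For (i), I would first record that $f\ll g$ forces $f$ to be bounded. Indeed, the constant functions $c_n\equiv n$ form an increasing sequence whose supremum in $\mathrm{LAff}(K)^{++}$ is the constant function $\infty$; since $g\le\infty=\sup_n c_n$, the relation $f\ll g$ yields $f\le c_N\equiv N$ for some $N$, so $f(x)\le N<\infty$ for all $x$. Next I would apply $f\ll g$ to the increasing sequence $h_n=\tfrac{n}{n+1}\,g$, whose pointwise supremum is $g$; since $g\le\sup_n h_n$, there is $k$ with $f\le\tfrac{k}{k+1}\,g$. Now the conclusion $f<g$ follows pointwise: at any $x$ with $g(x)<\infty$ we get $f(x)\le\tfrac{k}{k+1}g(x)<g(x)$ because $g(x)>0$, while at any $x$ with $g(x)=\infty$ we have $f(x)\le N<\infty=g(x)$.

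For (ii), suppose $f<g$ with $f$ continuous, and let $(g_n)$ be an increasing sequence in $\mathrm{LAff}(K)^{++}$ with $g\le\sup_n g_n=:h$; I must produce $k$ with $f\le g_k$. Since $f$ is continuous on the compact set $K$ it is finite, and $f(x)<g(x)\le h(x)=\sup_n g_n(x)$ for every $x$. Put $U_n=\{x\in K\mid g_n(x)>f(x)\}$. Each $U_n$ is open, since $g_n$ is lower semicontinuous and $f$ is continuous (given $x_0\in U_n$, choose $c$ with $f(x_0)<c<g_n(x_0)$; then $\{g_n>c\}\cap\{f<c\}$ is an open neighbourhood of $x_0$ contained in $U_n$). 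The $U_n$ increase because the $g_n$ do, and they cover $K$ because $\sup_n g_n(x)>f(x)$ for each $x$. By compactness $K=U_N$ for some $N$, that is, $f(x)<g_N(x)$ for all $x$, so $f\le g_N$ and hence $f\ll g$.

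The routine parts — affineness and lower semicontinuity of the pointwise suprema, and the identification of suprema in $\mathrm{LAff}(K)^{++}$ with pointwise suprema — I would dispatch quickly. The only genuinely delicate points are the treatment of the value $\infty$ in (i), which is exactly why the preliminary boundedness observation via the constant sequence is needed, and, in (ii), the passage from the pointwise strict inequality $f<h$ to a single uniform index $N$, where compactness of $K$ together with the continuity (hence finiteness) of $f$ is essential; without continuity of $f$ the sets $U_n$ need not be open and the argument breaks down.
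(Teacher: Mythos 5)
Your proof is correct and follows essentially the same route as the paper: part (ii) is verbatim the paper's open-cover compactness argument, and part (i) differs only in the choice of test sequence ($\tfrac{n}{n+1}g$ instead of the paper's $g-1/n$), which is the same idea of exhibiting an increasing sequence strictly below $g$ with supremum $g$. Your preliminary boundedness observation via the constant sequences is a welcome extra precaution that treats the points where $g=\infty$ more carefully than the paper's own argument does.
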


\begin{proof}
(i). Since $g\in\mathrm{LAff}(K)^{++}$ and $K$ is compact, we know that $g$ is bounded away from zero. 
Therefore, there exists $n_{0}$ such that $g-1/n\in\mathrm{LAff}(K)^{++}$ for all $n\geq n_{0}$, and we may take $n_{0}=1$. Since $\sup_{n}(g-1/n)=g$, there exists $n$ such that 
$f\leq g-1/n$, whence $f<g$.

(ii). Suppose that $g\leq\sup(g_{n})$ where $(g_{n})$ is an increasing sequence in $\mathrm{LAff}(K)^{++}$.
For each $n$, put $\mathcal{U}_{n}:=\{x\in K\mid f(x)< g_{n}(x)\}$, which is open as $f$ is continuous and $g_{n}$ is lower semicontinuous (and so is $g_{n}-f$). Since $f<g$, we see that $\bigcup_{n\geq 1}\mathcal{U}_{n}=K$. Using now that $K$ is compact and that $(g_n)$ is increasing, we find $m\geq 1$ with $K=\mathcal{U}_{m}$. This implies that $f< g_{m}$.
\end{proof}

\begin{lemma}
\label{lem:whatsthat}
Let $A$ be a simple, separable and unital C$^*$-algebra. Then, the monoid $\V(A)\sqcup \mathrm{LAff}(\T(A))^{++}$ is an object of Cu.
\end{lemma}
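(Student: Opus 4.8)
The plan is to take the abelian monoid structure and partial order on $M:=\V(A)\sqcup\mathrm{LAff}(\T(A))^{++}$ as already given (they are spelled out in the paragraph preceding Theorem~\ref{thm:bpt}), and to verify directly the three conditions characterising an object of Cu: the two axioms of Definition~\ref{def:precu} together with the requirement of Definition~\ref{def:cu} that every increasing sequence admit a supremum. Throughout I write $K=\T(A)$, a metrizable Choquet simplex (separability of $A$ makes $K$ metrizable, unitality makes it a simplex). The whole analysis proceeds by splitting elements according to whether they lie in the \emph{discrete} part $\V(A)$ or the \emph{continuous} part $\mathrm{LAff}(K)^{++}$, and by translating the order relations into pointwise (in)equalities of the associated rank functions, where $[p]$ is represented by $\hat p$ and $f$ by itself.

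First I would treat the continuous part in isolation, since all the genuine analytic input is concentrated there and is supplied by Lemmas~\ref{LAff2} and~\ref{LAff3}. Given $g\in\mathrm{LAff}(K)^{++}$, Lemma~\ref{LAff2} produces a strictly increasing sequence $(f_n)$ in $\mathrm{Aff}(K)^{++}$ with $g=\sup_n f_n$ pointwise; since each $f_n$ is continuous and $f_n<f_{n+1}$, Lemma~\ref{LAff3}(ii) gives $f_n\ll f_{n+1}$, so $(f_n)$ is rapidly increasing and exhibits $g$ as a supremum of a rapidly increasing sequence. For a projection class $[p]$ I would instead show that $[p]$ is \emph{compact}, i.e. $[p]\ll[p]$, so that the constant sequence serves as its rapidly increasing sequence: if $[p]\le\sup x_n$ for an increasing sequence $(x_n)$, the strict clause (iv) of the order forces $\hat p$ to lie strictly below the rank function of the supremum, and then compactness of $K$ (exactly as in the proof of Lemma~\ref{LAff3}(ii)) yields some $k$ with $[p]\le x_k$. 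This establishes axiom (i) of Definition~\ref{def:precu}.

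Next I would construct suprema of arbitrary increasing sequences $(x_n)$ in $M$. If a cofinal subsequence consists of projection classes that are eventually constant, the common value is the supremum. Otherwise the supremum should live in $\mathrm{LAff}(K)^{++}$ and be given by the pointwise supremum $g$ of the associated rank functions: this $g$ is affine and lower semicontinuous as an increasing pointwise supremum of such functions, and it is strictly positive since it dominates the first (strictly positive) rank function, so $g\in\mathrm{LAff}(K)^{++}$. The content is to check that $g$ is the \emph{least} upper bound in $M$; the delicate point is verifying $g\le y$ for an arbitrary upper bound $y$, where the strict inequality in clause (iv) together with compactness of $K$ and Lemma~\ref{LAff3} is used to compare $g$ against continuous or projection-type upper bounds. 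Once suprema exist, their compatibility with addition (and likewise the compatibility of $\ll$ with addition) reduces, via the identity $[p]+f=\hat p+f$ on mixed terms, to the corresponding facts for pointwise suprema and for the strict-inequality description of $\ll$ from Lemma~\ref{LAff3}, both of which are patently additive. This yields axiom (ii) of Definition~\ref{def:precu} and completes the verification that $M$ is an object of Cu.

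\textbf{The main obstacle} I anticipate is the bookkeeping at the interface between $\V(A)$ and $\mathrm{LAff}(K)^{++}$: every supremum, every way-below relation and every compatibility check must be re-examined in the mixed cases, and the crucial leverage comes from the \emph{strict} inequality defining $[p]\le f$ in clause (iv), which is precisely what lets a continuous rank function be pushed strictly below a supremum and then captured at a finite stage by compactness of $K$. In short, the genuinely non-formal steps are exactly the compactness arguments of Lemma~\ref{LAff3}, deployed to show that projection classes are compact and that sequences escaping into the continuous part have their suprema correctly identified.
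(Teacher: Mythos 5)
Your proposal is correct and follows essentially the same route as the paper: split $M=\V(A)\sqcup\mathrm{LAff}(\T(A))^{++}$ into its discrete and continuous parts, show projection classes are compact, use Lemmas~\ref{LAff2} and~\ref{LAff3} to realise each lower semicontinuous function as the supremum of a rapidly increasing sequence of continuous ones, build suprema of increasing sequences as pointwise suprema of the associated rank functions, and reduce the additivity checks to the mixed-term identity $[p]+f=\hat p+f$. The one detail you leave implicit is where simplicity of $A$ enters: for a non-stationary sequence of projection classes with $[p_k]<[p_l]$ one needs the strict pointwise inequality $\hat p_k<\hat p_l$ (faithfulness of all traces on a simple algebra) so that clause (iv) of the order makes the pointwise supremum of the $\hat p_n$ an upper bound for the $[p_k]$ at all.
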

\begin{proof}
Let $M=\V(A)\sqcup \mathrm{LAff}(\T(A))^{++}$. Let us first prove that every increasing sequence $(x_n)$ in $M$ has a supremum. If infinitely many terms of the sequence belong to $\mathrm{LAff}(\T(A))^{++}$, then the supremum equals the (pointwise) supremum $g$ of the functions that appear in $(x_n)$. For, if $[p]\in\V(A)$ is such that $x_n\leq [p]$ for all $n$, then by definition $x_n\leq\hat{p}$ for every $n$ such that $x_n\in\mathrm{LAff}(\T(A))^{++}$, whence $g\leq \hat{p}$, that is, $g\leq [p]$.

Otherwise, all but finitely many terms in $(x_n)$ belong to $\V(A)$. For those, write $x_n=[p_n]$, where $p_n$ are projections in matrices. Then, either the sequence is eventually constant, in which case the supremum belongs to $\V(A)$, or else $[p_k]<[p_{k+1}]$ for infinitely many $k$s. In that case $\sup_n x_n=\sup_n \hat{p}_n$. We only need to verify that $[p_k]\leq \sup_n\hat{p}_n$. Indeed, given $k$, there is $l>k$ with $[p_k]<[p_l]$. Simplicity of $A$ ensures that $\hat{p}_k<\hat{p}_l$, whence $\hat{p}_k<\sup_n\hat{p}_n$. Thus $[p_k]\leq\sup_n\hat{p}_n$.

From our observations above, it follows that the (only) compact elements in $M$ are the ones in $\V(A)$. Indeed, if $[p]\leq\sup f_n$ for functions $f_n\in \mathrm{LAff}(\T(A))^{++}$, then $\hat{p}<\sup f_n$, and by compactness we may choose $\epsilon>0$ such that $\hat{p}+\epsilon<\sup f_n$. Since $\hat{p}$ is continuous, Lemma \ref{LAff3} implies $\hat{p}<f_k$ for some $k$, that is, $[p]\leq f_k$.

As $A$ is separable, we know that $\T(A)$ is metrizable, and it follows from Lemma \ref{LAff2} that each element in $\mathrm{LAff}(\T(A))^{++}$ is the pointwise supremum of a strictly increasing sequence of elements from $\mathrm{Aff}(\T(A))^{++}$, which again, in view of Lemma \ref{LAff3}, is a rapidly increasing sequence.

It is easy to verify that suprema and addition in $M$ are compatible. Assume now that $x\ll y$, and $z\ll t$. The only case of interest arises when one of $y$, $t$ belongs to $\V(A)$ and the other does not. Assume then that, for example, $y\in\V(A)$, and write $t=\sup_n f_n$ for a strictly increasing sequence of affine, continuous functions. Then, $z\leq f_n$ for some $n$, whence
\[
x+z\leq y+t_n=\hat{y}+f_n\ll\hat{y}+f_{n+1}=y+f_{n+1}\leq y+t\,,
\]
showing that $\ll$ and addition are compatible.
\end{proof}

Assembling the results above, we obtain the following result that recovers Theorem 2.6 in \cite{bt}.
\begin{theorem}
Let $A$ be a unital, simple, separable, exact C$^*$-algebra. Assume that $A$ is finite, has strict comparison, and $\W(A)$ is almost divisible. Then, there is an ordered monoid isomorphism
\[
\W(A\otimes\mathcal{K})\cong \V(A)\sqcup\mathrm{LAff}(\T(A))^{++}\,.
\]
\end{theorem}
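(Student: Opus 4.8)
The plan is to realize $\W(A\otimes\mathcal K)$ as the completion of $\W(A)$ and then identify that completion with $N:=\V(A)\sqcup\mathrm{LAff}(\T(A))^{++}$. First I would note that strict comparison forces the radius of comparison (Definition~\ref{def:rc}) to satisfy $\mathrm{rc}(A)=0$, so in particular it is finite; Theorem~\ref{thm:rcher} then gives that $\W(A)$ is hereditary. Consequently Theorem~\ref{thm:wacompletion} applies and shows that $(\W(A\otimes\mathcal K),\iota)$ is the completion of $\W(A)$, that is, $\W(A\otimes\mathcal K)\cong\overline{\W(A)}$. On the other hand all the hypotheses of Theorem~\ref{thm:bpt} are in force, so there is an ordered monoid isomorphism $\W(A)\cong M$, where $M:=\V(A)\sqcup\mathrm{LAff}_b(\T(A))^{++}$. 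Since $\ll$ and suprema are defined purely order-theoretically, this is automatically an isomorphism in PreCu, and by the universal property (Theorem~\ref{universality}), which yields uniqueness of completions, it induces $\overline{\W(A)}\cong\overline M$. Thus the whole statement reduces to proving $\overline M\cong N$.

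To establish this I would verify directly that $(N,\iota_0)$, where $\iota_0\colon M\hookrightarrow N$ is the natural inclusion, satisfies the three conditions of Definition~\ref{dfn:completion}, and then invoke uniqueness of the completion of $M$. Condition (i), that $N$ is an object of Cu, is exactly Lemma~\ref{lem:whatsthat}. For the density condition (iii) I would argue case by case on $x\in N$: if $x\in\V(A)$ it is compact, so the constant sequence works; if $x=g\in\mathrm{LAff}(\T(A))^{++}$, then since $A$ is separable $\T(A)$ is a metrizable Choquet simplex, and Lemma~\ref{LAff2} writes $g$ as the pointwise supremum of a strictly increasing sequence $(f_n)$ in $\mathrm{Aff}(\T(A))^{++}\subseteq M$, which by Lemma~\ref{LAff3}(ii) is rapidly increasing with supremum $g$ computed in $N$.

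The remaining, and main, step is condition (ii): that $\iota_0$ is an order-embedding in PreCu. That $\iota_0$ is an order-embedding is immediate from the definition of the order on $N$. I would show it preserves suprema by first observing that $\iota_0$ is \emph{hereditary} in the sense of Definition~\ref{dfn:hereditary}: any element of $N$ lying below an element of $M$ is either a class in $\V(A)$, hence already in $M$, or a function dominated by a bounded one (note that each $\hat p$ is continuous, hence bounded), hence in $M$; Lemma~\ref{lem:hereditary} then gives preservation of existing suprema. The genuinely delicate point is preservation of $\ll$, for which a priori the relation could differ between $M$ and $N$ since $N$ contains more increasing sequences. Here I would use that in both monoids $\ll$ admits the same concrete description: for $f,g$ in the function part, $f\ll g$ if and only if there is a continuous $h\in\mathrm{Aff}(\T(A))^{++}$ with $f\le h<g$ (this combines Lemma~\ref{LAff3} with the fact that every function is a supremum of rapidly increasing continuous ones), while any relation involving a class $[p]\in\V(A)$ reduces to an inequality of the associated affine functions because such classes are compact in both $M$ and $N$. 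As these characterizations are intrinsic to $\T(A)$ and do not refer to the ambient monoid, $\iota_0$ both preserves and reflects $\ll$.

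Having checked (i)--(iii), $(N,\iota_0)$ is a completion of $M$, so $\overline M\cong N$, and chaining the isomorphisms above gives $\W(A\otimes\mathcal K)\cong N$. I expect the $\ll$-preservation inside condition (ii) to be the main obstacle, precisely because $\ll$ is sensitive to which suprema exist; the crucial point is that enlarging $\mathrm{LAff}_b(\T(A))^{++}$ to $\mathrm{LAff}(\T(A))^{++}$ creates no new way-below relations beyond those already detected by continuous affine functions.
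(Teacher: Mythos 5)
Your argument is correct and is essentially the paper's own proof: both reduce the statement to showing that $(\V(A)\sqcup\mathrm{LAff}(\T(A))^{++},\iota_0)$ is the completion of $\V(A)\sqcup\mathrm{LAff}_b(\T(A))^{++}$ and then combine Theorem~\ref{thm:bpt}, Theorem~\ref{thm:wacompletion} and uniqueness of completions, with Lemma~\ref{lem:whatsthat}, Lemma~\ref{LAff2} and Lemma~\ref{LAff3} supplying conditions (i) and (iii) of Definition~\ref{dfn:completion}. You simply fill in the two points the paper passes over quickly: you obtain hereditariness of $\W(A)$ from $\mathrm{rc}(A)=0$ via Theorem~\ref{thm:rcher}, which fits the stated hypotheses more directly than the paper's appeal to Lemma~\ref{lem:sr1her}, and you substantiate the paper's ``clearly'' for condition (ii) by checking that enlarging $\mathrm{LAff}_b(\T(A))^{++}$ to $\mathrm{LAff}(\T(A))^{++}$ creates no new way-below relations.
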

\begin{proof}
Let $\iota\colon\V(A)\sqcup \mathrm{LAff}_{b}(\T(A))^{++}\to \V(A)\sqcup \mathrm{LAff}(\T(A))^{++}$ be the natural inclusion. We only need show that the pair $(\V(A)\sqcup\mathrm{LAff}(\T(A))^{++},\iota)$ is the completion of $\V(A)\sqcup \mathrm{LAff}_{b}(\T(A))^{++}$ and then invoke Theorem \ref{thm:bpt} and Theorem \ref{thm:wacompletion}.

We know that $\V(A)\sqcup \mathrm{LAff}_{b}(\T(A))^{++}$ is an object of PreCu (by Theorem \ref{thm:bpt}, Proposition \ref{prop:wainprecu} and Lemma \ref{lem:sr1her}), and that $\V(A)\sqcup \mathrm{LAff}(\T(A))^{++}$ is an object of Cu (by Lemma \ref{lem:whatsthat}). Clearly, the map $\iota$ is an order-embedding in PreCu. Now, any $f\in \mathrm{LAff}(T(A))^{++}$ may be written as $f=\sup(\iota(f_{n}))$ where $f_{n}\in\mathrm{Aff}(T(A))^{++}$. Thus $(\V(A)\sqcup \mathrm{LAff}(\T(A))^{++},\iota)$ satisfies the requirements of Definition \ref{dfn:completion}.
\end{proof}

We now turn to the real rank zero situation, and show that, for this class, the Cuntz semigroup (of the stabilisation) is order isomorphic to the monoid of intervals in the projection monoid. This was shown to be the case if moreover the algebra has stable rank one by the third author in \cite{perijm}, but this turns out not to be necessary.

\begin{theorem}
Let $A$ be a $\sigma$-unital C$^*$-algebra with real rank zero.
Then
\[
\W(A\otimes\mathcal{K})\cong\Lambda_{\sigma}(\V(A))
\]
as ordered monoids.
\end{theorem}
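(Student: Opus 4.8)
The plan is to write down an explicit order isomorphism directly, rather than to invoke the completion functor. Throughout I identify $\V(A)$ with the monoid of Murray--von Neumann classes of projections in $A\otimes\mathcal K$, and I order $\Lambda_\sigma(\V(A))$ by inclusion of intervals. The candidate map is
\[
\Phi\colon\W(A\otimes\mathcal K)\to\Lambda_\sigma(\V(A)),\qquad \langle a\rangle\mapsto I_{\langle a\rangle}:=\{[p]\in\V(A)\mid \langle p\rangle\leq\langle a\rangle\},
\]
the interval of projection classes Cuntz-dominated by $\langle a\rangle$. Three ingredients drive everything: (a) real rank zero yields, for every $a\in(A\otimes\mathcal K)_+$ and $\epsilon>0$, a projection $p$ with $(a-\epsilon)_+\precsim p\precsim a$, so that $\langle a\rangle=\sup_n\langle p_n\rangle$ for a (without loss increasing) sequence of projection classes; (b) projections are compact in the Cu-semigroup $\W(A\otimes\mathcal K)$, i.e. $\langle p\rangle\ll\langle p\rangle$, since $(p-\epsilon)_+\sim p$; and (c) for projections, Cuntz subequivalence agrees with Murray--von Neumann subequivalence, so that the order of $\V(A)$ is exactly the restriction of the Cuntz order.

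First I would check $\Phi$ is well defined. Order-heredity of $I_{\langle a\rangle}$ is immediate from (c), since $[q]\leq[p]$ in $\V(A)$ forces $\langle q\rangle\leq\langle p\rangle$. For upward-directedness and countable generation I write $\langle a\rangle=\sup_n\langle p_n\rangle$ as in (a); given $[p],[q]\in I_{\langle a\rangle}$, compactness (b) gives $\langle p\rangle\leq\langle p_n\rangle$ and $\langle q\rangle\leq\langle p_m\rangle$ for some $n,m$, whence by (c) $[p],[q]\leq[p_N]$ in $\V(A)$ with $N=\max\{n,m\}$. Thus $\{[p_n]\}$ is a countable cofinal set and $I_{\langle a\rangle}\in\Lambda_\sigma(\V(A))$.

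Next I would verify $\Phi$ is an isomorphism of ordered monoids. Additivity $I_{\langle a\oplus b\rangle}=I_{\langle a\rangle}+I_{\langle b\rangle}$ follows by the same mechanism: writing $\langle a\rangle=\sup\langle p_n\rangle$ and $\langle b\rangle=\sup\langle q_n\rangle$, a projection below $\langle a\rangle+\langle b\rangle$ is, by compactness, below some $\langle p_n\oplus q_n\rangle$, and (c) converts this into the Minkowski sum. Monotonicity is clear, and order-reflection (hence injectivity) uses (a): if $I_{\langle a\rangle}\subseteq I_{\langle b\rangle}$ then each $\langle p_n\rangle\leq\langle b\rangle$, so $\langle a\rangle=\sup_n\langle p_n\rangle\leq\langle b\rangle$. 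For surjectivity, given a countably generated interval $I$ with increasing cofinal sequence $([p_n])$, I set $x=\sup_n\langle p_n\rangle$, which exists since $\W(A\otimes\mathcal K)$ is an object of Cu, and check by compactness and heredity that $I_x=I$.

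The main obstacle is ingredient (a): producing, for each $a$ and $\epsilon$, a projection with $(a-\epsilon)_+\precsim p\precsim a$. This rests on the fact that the hereditary subalgebra $\overline{(a-\epsilon)_+(A\otimes\mathcal K)(a-\epsilon)_+}$ again has real rank zero and is $\sigma$-unital, hence carries an approximate unit of projections, a sufficiently large one of which sits between $(a-\epsilon')_+$ and $a$ in the Cuntz order. The delicate point---and the reason this works without the stable rank one hypothesis of \cite{perijm}---is that one must argue entirely through Cuntz comparison and the compactness of projection classes from (b), never lifting to honest subprojections or module isomorphisms. I would close with the conceptual remark that, by (a) and (b), the compact elements of $\W(A\otimes\mathcal K)$ are \emph{precisely} the projection classes $\V(A)$, while the remaining elements are their suprema; this is the all-compact incarnation of the principle behind Corollary \ref{cor:completion}. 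One cannot, however, simply apply the completion functor to $\V(A)$: with its algebraic order $\V(A)$ need not have all elements compact (already for UHF algebras it admits suprema of strictly increasing sequences), so the target is the interval monoid under inclusion and the direct construction of $\Phi$ is required.
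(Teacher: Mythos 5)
Your proposal is correct and follows essentially the same route as the paper: the same explicit map $\langle a\rangle\mapsto\{[p]\in\V(A)\mid p\precsim a\}$, with surjectivity obtained by taking $\langle a\rangle=\sup\langle p_n\rangle$ for a cofinal sequence of projection classes and using compactness of $\langle p\rangle$ in $\W(A\otimes\mathcal K)$. The only difference is that you spell out the well-definedness and order-embedding arguments (via the interpolation $(a-\epsilon)_+\precsim p\precsim a$ from real rank zero and the agreement of Cuntz and Murray--von Neumann comparison on projections), where the paper delegates these to the proof of Theorem 2.8 of \cite{perijm}.
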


\begin{proof}
We may identify $\V(A\otimes\mathcal{K})$ with $\V(A)$. For each $a\in A\otimes\mathcal{K}_+$ put $I(a)=\{[p]\in\V(A)\mid p\precsim a\}=\{[p]\in\V(A)\mid p\in\overline{aM_{\infty}(A)a}\}$, which is an element in $\Lambda_{\sigma}(\V(A))$.

Define $\varphi\colon\W(A\otimes\mathcal{K})\to\Lambda_{\sigma}(\V(A))$
by $\varphi(\langle a\rangle)= I(a)$. The proof of Theorem 2.8 in \cite{perijm} shows that $\varphi$ is a well defined order embedding.

Now, let $I\in\Lambda_{\sigma}(\V(A))$ and let $\{[p_n]\}$ be an increasing
countable cofinal subset. We of course have that $(\langle p_n\rangle)$ is an increasing sequence in $\W(A\otimes\mathcal{K})$. Let $\langle a\rangle=\sup(\langle p_{n}\rangle)$, and let us show that $I(a)=I$.

Let $p\precsim a$ be a projection. Then $\langle
p\rangle\leq\langle a\rangle$. Since $\langle
p\rangle\ll\langle p\rangle\leq\langle a\rangle =\sup(\langle
p_{n}\rangle)$, we get $\langle p\rangle\leq\langle p_{n}\rangle$
for some $n$, that is, $[p]\leq [p_{n}]$ in $\V(A)$. Conversely, if $[q]\in I$, then
$[q]\leq [p_{n}]$ for some $n$, and therefore $\langle q\rangle \leq
\langle p_{n}\rangle\leq \langle a\rangle$. Thus $[q]\in I(a)$.
\end{proof}

\section{Countable inductive limits of monoids}\label{sec:limits}

In this section we answer, in the negative, the question of whether the category PreCu is closed under coutable inductive limits. We subsequently repair this defect by constructing a smaller category that sits between Cu and PreCu, and to which the Cuntz semigroup belongs in interesting cases.

\begin{theorem}\label{PreCunolim}The category PreCu does not have inductive limits.\end{theorem}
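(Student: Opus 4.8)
The plan is to prove this theorem by exhibiting a single, concrete inductive system in PreCu whose inductive limit fails to exist, i.e.\ a diagram admitting no object in PreCu satisfying the universal property of the colimit. The natural place to look is precisely the obstruction already flagged in the introduction: the sequence $\N\stackrel{\mathrm{id}}{\to}\N\stackrel{\mathrm{id}}{\to}\N\to\cdots$, or a mild variant of it. First I would verify that $\N$ (with the usual order and every element compact, hence $x\ll y$ iff $x\leq y$, except that $0\ll 0$ and $n\not\ll n$ for $n\geq 1$ needs care) is actually an object of PreCu; more safely, I expect the right building block is $\N\cup\{\infty\}$ or $\N$ itself with all maps the identity, and one must first check each $M_n$ lies in PreCu and each bonding map $\mathrm{id}\colon M_n\to M_{n+1}$ is a genuine PreCu-morphism (preserving $\ll$ and existing suprema). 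The key tension to exploit is that in PreCu an element must be the supremum of a rapidly increasing sequence, yet the ``obvious'' limit $\N\cup\{\infty\}$ would require $\infty$ to be a supremum of the eventually-constant sequence, which forces behaviour incompatible with the $\ll$-structure of the finite stages.

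The core of the argument is a universal-property contradiction. Suppose, toward a contradiction, that an inductive limit $(L,(\lambda_n))$ exists in PreCu, where $\lambda_n\colon M_n\to L$ are the canonical maps with $\lambda_{n+1}\circ(\text{bonding})=\lambda_n$. I would then test $L$ against two competing cocones. On one hand, since every $M_n\cong\N$ is already an object of Cu, and Cu \emph{does} admit countable inductive limits (by \cite{CEI}), the system has a colimit $L_{\mathrm{Cu}}$ in Cu, which I expect to compute explicitly as $\N\cup\{\infty\}$. The universal property in PreCu would produce a comparison map $L\to L_{\mathrm{Cu}}$, and conversely, because $L_{\mathrm{Cu}}$ is an object of Cu $\subseteq$ PreCu receiving a compatible cocone, one also gets $L_{\mathrm{Cu}}\to L$; these would have to be mutually inverse, forcing $L\cong\N\cup\{\infty\}$ in PreCu. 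The contradiction is then extracted by examining whether $\N\cup\{\infty\}$ can carry a PreCu-structure \emph{together with} maps $\lambda_n$ that are initial among all PreCu-cocones: the failure is that the element $\infty$ cannot be written as a supremum of a rapidly increasing sequence coming from the images of the finite stages in a way that respects both axioms, or alternatively that there is a competing cocone into a PreCu-object (built without the top element, or with the sequence not converging) through which the map from $L$ cannot be made unique.

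Concretely, I expect the cleanest route is to produce two distinct factorizations. I would choose a target object $P$ in PreCu and two different compatible families of maps $M_n\to P$ agreeing with the structure, and show that if $L$ existed the induced map $L\to P$ could not simultaneously be unique and well-defined; or, dually, I would show that no candidate $L$ can be initial by constructing, for any proposed $L$, a strictly ``smaller'' or ``incompatible'' competing cocone. The hard part will be organizing this so that it is a genuine impossibility rather than merely a failure of one naive guess: one must quantify over \emph{all} conceivable PreCu-objects $L$ and rule each out, which is best done via the comparison-map sandwich with the Cu-limit described above, reducing the infinitely many cases to the single identification $L\cong\N\cup\{\infty\}$ and then deriving a contradiction from the defining axioms of PreCu (specifically, reconciling (i) ``every element is a sup of a rapidly increasing sequence'' with the compactness of the finite-stage elements and the requirement that the $\lambda_n$ preserve $\ll$). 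I anticipate the main obstacle is precisely pinning down why the natural Cu-limit $\N\cup\{\infty\}$, though a perfectly good PreCu-object, nonetheless fails the PreCu \emph{universal} property — the subtlety being that a map out of a PreCu-colimit need only preserve suprema that \emph{exist}, so a PreCu target lacking the relevant supremum can detect the difference and break uniqueness.
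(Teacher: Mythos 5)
There is a genuine gap, and it is located at the very first step: the diagram you choose cannot witness the failure of colimits. A constant system $\N\stackrel{\mathrm{id}}{\to}\N\stackrel{\mathrm{id}}{\to}\cdots$ has an inductive limit in \emph{any} category, namely $\N$ itself with all structure maps the identity (a cocone is just a single map $\mu\colon\N\to P$, and $\mu$ itself is the unique factorization). The remark in the introduction about $\mathcal K=\lim_{\rightarrow}M_n(\C)$ is a statement about the functor $\W(-)$ failing to commute with limits, not about the limit of the monoid diagram failing to exist. Your fallback, the ``comparison-map sandwich,'' is also flawed in one direction: the universal property of a hypothetical PreCu-limit $L$ does give a map $L\to L_{\mathrm{Cu}}$, but to get a map $L_{\mathrm{Cu}}\to L$ you would need to invoke the universal property of $L_{\mathrm{Cu}}$ against the target $L$, and that universal property only quantifies over objects of Cu; since $L$ need not lie in Cu, no such map is produced and you cannot force $L\cong\N\cup\{\infty\}$.

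The paper's counterexample is engineered precisely to avoid this triviality: it takes $S_i=\frac{1}{2^i}\N$ with the \emph{proper} inclusions $S_i\hookrightarrow S_{i+1}$, so that the algebraic limit $S\subseteq\Q^+$ (the dyadic rationals) is no longer discrete. The tension is then real: each cocone map $\psi_i$ must preserve $1\ll 1$, so $\psi(1)$ is compact in $L$ and hence cannot be the supremum of the non-stationary sequence $\psi(1-\tfrac1{2^n})$; yet that sequence is bounded above, and axiom (i) of PreCu together with an analysis of two hand-built targets $T_1=S\sqcup S'$ and $T_2=S\sqcup S'$ (same monoid, two different orders, chosen so that in $T_2$ only stationary sequences have suprema) forces $L$ to contain an extra element $t'$ outside $\psi(S)$. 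One then exhibits a PreCu-map $\gamma\colon T_2\to L$ collapsing $S'$ onto $\psi(S)$ and checks that $\gamma\circ\varphi\neq\mathrm{id}_L$ while both are compatible with the cocone, violating uniqueness. If you want to salvage your outline, you must replace the identity system with one whose bonding maps genuinely refine the order (so that compactness at each finite stage conflicts with the emergence of non-compact limit points), and replace the sandwich argument with an explicit pair of competing targets as above.
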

\begin{proof}
We consider for all $i\geq 0$ the following monoids
\[S_i=\frac{1}{2^i}\N=\left\{\frac{n}{2^i}\ \mid n\in \N\right\},\]
with the natural order and addition. It is clear that these discrete monoids are in PreCu (in fact in $\Cu$ if we add an infinite element) and are all isomorphic to $\N$. Consider now the following inductive sequence, where $f_i$ are the natural inclusions:

\[ S_1 \stackrel{f_1}{\to} S_2  \stackrel{f_2}{\to} S_3  \stackrel{f_3}{\to} \dots\]

Let $S=\bigcup_{i\geq 1} S_i\subseteq \Q^+$ be the standard algebraic inductive limit of the sequence
(i.e. the inductive limit in the category of ordered abelian monoids), with the inclusions $\varphi_i\!:\!S_i\to S$ as compatible maps 
($\varphi_{i+1}f_i=\varphi_i$). Observe that this monoid is no longer discrete, and, as we will see, it can not be the limit in 
PreCu (neither in $\Cu$ by just adding infinity), since properties as $1\ll 1$ in any of the $S_i$ are not preserved in $S$.

Arguing by contradiction, suppose that the sequence $(S_i, f_i)$ has an inductive limit in PreCu, 
\[ T=\displaystyle{\lim_{\rightarrow\text{\tiny PreCu}}}\left(S_i,f_i\right) \text{ with } \psi_i\!:\!S_i\to T \text{ such that }\psi_{i+1}f_i=\psi_i.\] 

We will construct two monoids $T_1$, $T_2$ in PreCu with compatible maps $S_i\to T_j$ ($j=1,2$), and use the universal property from $T$ to derive a contradiction.

First, let $S'=S\setminus\{0\}$. Denote its elements by $s'$. Define $T_1:=S\sqcup S'$ with addition extending the addition in $S$ and $S'$ by $(a+b')=(a+b)'$ (primes denoting elements in $S'$),
and the order also extending the order in $S$ and $S'$ with 
\[ a'\leq b \text{ if } a\leq b,\ \text{ and }  a\leq b' \text{ if } a<b.\] 
This last condition is in fact a consequence of extending algebraically the order in $S$ and $S'$.
We also define $T_2:=S\sqcup S'$, the same as $T_1$ as sets and even as monoids, but with a different order, again extending the order from  $S$ and $S'$ but now 
\[ a'\leq b \text{ if } a< b, \ \text{ and } a\leq b' \text{ if } a<b. \]
Observe that, by doing this, we are just preventing $a$ and $a'$ to be comparable for any $a\in S\setminus\{0\}$.

\smallskip
\emph{\underline{Claim 1:} $T_1$ and $T_2$ are objects of PreCu.}
\smallskip

It is easy to see, in both situations, that the addition is order preserving and hence that $T_1,T_2$ are ordered abelian monoids.
The other properties in PreCu are also easily derived once we know how suprema are constructed, and, with that, when we have 
$x\ll y$. 

Suprema of stationary sequences always exist, so let us consider $x_1\leq x_2\leq \dots \in T_j$ a non stationary sequence with 
a supremum in $T_j$. Let $\gamma_j\!:\!T_j\to S$ be the order preserving monoid map identifying both copies of $S$, and consider 
the sequence $(\gamma_j(x_i))_i$ which is also non stationary since we only have two copies of each element. Observe that 
if $\gamma_j(x_i)<r$ in $S$, then $x_i<r,r'$ both in $T_1$ and $T_2$. Hence, if a supremum exists in $T_j$, it must exist in
$S$, say $r=\sup_{S}(\gamma_j(x_i))_i$, and the supremum in $T_j$ must be either $r$ or $r'$.
In $T_1$ we have $r'<r$ therefore $\sup_{T_1}(x_n)_n=r'$. However, $r$ and $r'$ are not comparable in $T_2$, and hence we can not have a supremum.

Hence, in $T_1$, non stationary sequences  with suprema are the ones which have a supremum by identifying both copies of $S$, and in this case, the supremum is the copy coming from $S'$. In $T_2$ the only sequences with supremum are the stationary ones.

Using this one sees that $T_1$ is a linearly ordered monoid, with two copies of each element in $S\setminus \{0\}$, ordered as $a'<a$, 
and with $x\ll y$ if $x<y$ or if $x=y\in S$. 

Suprema in $T_2$ come only from stationary sequences, whence this is like the discrete situation, so we have 
$x\ll y$ if $x\leq y$.

With this in mind it is now easy to prove that $T_1$ and $T_2$ are objects in PreCu.
Furthermore, considering the natural inclusions in the first copy of $S$, $i_j\colon S\to T_j$ for $j=1,2$, we also obtain compatible maps
in PreCu, $i_j\varphi_i\colon S_i\to T_j$ (that is, $i_j\varphi_i=i_j\varphi_{i+1}f_i$).

\smallskip
\emph{\underline{Claim 2:} $T$ contains a copy of $S$ as an ordered abelian submonoid.}
\smallskip

Consider the following diagram of ordered monoid maps, observing that the only maps that belong to PreCu are the $f_i$s and the $i_1\varphi_j$s:
\[
\xymatrix@=7ex{ 
                                     &                                                                       &           & S\ar@{-->}[d]^{\exists !\psi}\ar@/^2pc/[dd]^{\exists ! i_1} \\
S_1\ar@{->}[r]^(.7){f_1} \ar@{->}[urrr]^{\varphi_1} \ar@{->}[drrr]_{i_1\varphi_1} & S_2\ar@{->}[r]^(.6){f_2} \ar@{->}[urr]_{\varphi_2}\ar@{->}[drr]^{i_1\varphi_2}  & S_3\ar@{..>}[r]^{\psi_i} \ar@{->}[ur]_{\varphi_3} \ar@{->}[dr]^{i_1\varphi_3}  & T \ar@{-->}[d]^{\exists !\varphi} \\ 
                                    &                                                                       &           & T_1
}
\]

Since $T$ has compatible maps $\psi_i$ with the $f_i$,  by the universal property of $S$ we obtain a unique ordered monoid homomorphism $\psi\!:\!S\to T$ such that $\psi\varphi_i=\psi_i$. Again, since $T_1$ also has compatible maps $i_1\varphi_i$, now in the category PreCu, by the universal property of $T$ we obtain a unique map $\varphi\!:\!T\to T_1$ such that $\varphi\psi_i=i_1\varphi_i$, and,
at the level of monoids, we obtain a unique ordered map $\tilde\varphi\!:\!S\to T_1$ such that $\tilde\varphi\varphi_i=i_1\varphi_i$. By uniqueness this last map should be the inclusion, $i_1=\tilde\varphi$, and since $\varphi\psi\varphi_i=\varphi\psi_i=i_1\varphi_i$ we have, again by uniqueness, $i_1=\varphi\psi$.

Therefore, since the inclusion factors through $T$, $\psi$ is injective and we have an isomorphic copy of $S$ in $T$. We will thus write $T=\psi(S)\sqcup T'$.

\smallskip
\emph{\underline{Claim 3:} $T'\neq\emptyset$.}
\smallskip

Observe that since the monoids $S_i$ are discrete and all bounded sequences are eventually stationary,
all elements are compact (that is, $a\ll a$ for all $a\in S_i$). 
Now for any $i\geq 1$, $\psi(1)=\psi(\varphi_i(1))=\psi_i(1)$, and since $1\ll 1$ in $S_i$, and $\psi_i$ are PreCu morphisms,
we have that $\psi(1)\ll \psi(1)$ also in  $T$. Clearly  this can be extended to prove that all elements in $\psi(S)$ are compact.

In $T$ we have that $\psi(1)\geq \psi(\frac{2^n-1}{2^n})=\psi(1-\frac{1}{2^n})$ for all $n\geq 1$. Since $\psi$ is 
injective and compact elements can not be written as suprema of non stationary sequences, we have that 
$\psi(1)\neq \sup_T(\psi(1-\frac{1}{2^n}))_n$ (this last element might not even exist). But $\psi(1)$ is an upper bound for the sequence
and therefore there exists $t'\in T$ such that $\psi(1-\frac{1}{2^{n}})\leq t'$, for all $n\geq 1$, and
either $t'<\psi(1)$ or else $t'$ is not comparable with $\psi(1)$. Since $\psi$ is an ordered morphism, 
and $\psi(S)$ is completely ordered, it is clear that $t'\not\in \psi(S)$, and hence $t'\in T'\neq \emptyset$.

\smallskip
\emph{\underline{Claim 4:} $T$ cannot be the inductive limit in PreCu of $(S_i,f_i)$.}
\smallskip

Recall that in $T_2$ the only sequences with suprema are the stationary sequences. With this in mind, and with $\psi$ as in 
Claim 2, it is not difficult to see that the following is a morphism in PreCu :
\[
\begin{array}{rcl} \gamma: T_2& \longrightarrow &T \\
a & \longmapsto & \psi(a) \text{ if } a\in S \\
a' & \longmapsto & \psi(a) \text{ if } a'\in S'
\end{array}
\]
Now consider the following diagram of maps in PreCu, 
\[ \xymatrix@=7ex{
& & T \ar@{-->}[dd]_{\exists ! \varphi} \\
S_i\ar[r]^{f_i}\ar[urr]^{\psi_i}\ar[drr]_{i_2\varphi_i} & S_{i+1}\ar[ur]_{\psi_{i+1}}\ar[dr]^{i_2\varphi_{i+1}} & \\
& & T_2\ar@/_1.5pc/[uu]_\gamma} 
\]
By the universal property of $T$ there exists a unique map $\varphi\!:\!T\to T_2$ such that $\varphi\psi_i=i_2\varphi_i$. 
Now observe that $\psi_i(s)=\psi(\varphi_i(s))=\gamma(i_2 \varphi_i(s))$ and therefore, $\gamma\varphi\psi_i=\gamma i_2\varphi_i=\psi_i$.
But $\gamma\varphi\neq \text{Id}_T$ since $t'\not\in \gamma(T_2)$. 
This contradicts the universal property for $T$ leading to two different compatible maps from $T$ to $T$.
\end{proof}

\begin{remark}
{\rm Observe that the example of inductive chain used in the proof of the previous Theorem, can be obtained as an inductive chain induced by 
the Cuntz functor $\W(-)$ applied to an inductive chain of C$^*$-algebras. 

Consider the $2^\infty$ UHF-Algebra, $A=\lim_{\rightarrow} (M_{2^i}(\C),g_i)$ with $g_i(x)=\left(\begin{smallmatrix}x & 0 \\ 0 & x\end{smallmatrix}\right)$. The Cuntz semigroup of each matrix algebra $\W(M_n(\C))$ is isomorphic to a cyclic free semigroup with $0$, given by the rank function, $\langle a\rangle \mapsto \text{rank}(a)$ for all $a\in A_+$. Hence, if we make identifications $\W(M_{2^i}(\C))=\frac{1}{2^i}\N=S_i$ -- by using the weighted rank funtions $\langle a\rangle\mapsto \frac{1}{2^i}\text{rank}(a)$ -- the induced maps $\W(g_i)$ are the natural inclusions $f_i\colon S_i\to S_{i+1}$. Therefore, we obtain the previous sequence of monoids in PreCu.}
\end{remark}

Let us thus define a new category, which will be suitable for the Cuntz semigroup for a large class of C$^*$-algebras, and in which inductive limits can be constructed.

\begin{definition}
Let $\mathcal{C}$ be the full subcategory of PreCu whose objects are monoids $M$ closed by suprema of bounded increasing sequences.
\end{definition}

\begin{examples}{\rm 
As illustrating examples, observe that  $\mathbb{Q}^+$ is an object of  PreCu but not of ${\mathcal C}$, $\mathbb{R}^+$ is an object of ${\mathcal C}$ but not of Cu, and finally $\mathbb{R}^+\cup \{\infty\}$ is an object of $\Cu$.

We have seen in Lemma~\ref{lem:sr1her} that for  stable rank one C$^*$-algebras $A$, $\W(A)$ is hereditary and
therefore in PreCu. Also, by~\cite[Lemma 4.3]{bpt} we see that $\W(A)$ has suprema of bounded sequences (this was in fact used to prove Lemma~\ref{lem:sr1her}) and
therefore  $\W(A)$ is an object of $\mathcal C$. 

Furthermore, we will see that the category $\mathcal C$ coincides with the category of monoids $M$ for which 
the inclusion $\iota\colon M\to \overline{M}$ is hereditary.}.
\end{examples}

\begin{proposition}\label{prop:heriffC} Let $M$ be in PreCu. Then the inclusion $\iota\colon M\to\overline{M}$ is hereditary if and only if $M$ is an object of $\mathcal C$, that is, all bounded
increasing sequences in $M$ have a supremum. \end{proposition}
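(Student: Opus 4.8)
The plan is to prove both implications by directly comparing suprema computed in $M$ with suprema computed in $\overline{M}$, using the explicit interval description of $\overline{M}$ from Section~\ref{sec:existence}.

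First I would prove the forward direction: assume $\iota$ is hereditary and show $M$ is an object of $\mathcal{C}$. Let $(x_n)$ be a bounded increasing sequence in $M$, say $x_n\leq b$ for all $n$. Since $\overline{M}$ is an object of Cu (Theorem~\ref{thm:completion}), the increasing sequence $(\iota(x_n))$ has a supremum $w=\sup\iota(x_n)$ in $\overline{M}$. Now $w\leq\iota(b)$ because each $\iota(x_n)\leq\iota(b)$ and $\iota(b)$ is an upper bound; hence $w\leq\iota(b)$ with $\iota(b)\in\iota(M)$. The hereditary hypothesis then forces $w=\iota(x)$ for some $x\in M$. Finally I would invoke Lemma~\ref{lem:hereditary}: since $\iota$ is a hereditary map into the Cu-object $\overline{M}$, any supremum existing in $M$ is preserved by $\iota$; more to the point, because $\iota$ is an order-embedding and $w=\iota(x)$ dominates each $\iota(x_n)$, order-embedding gives $x_n\leq x$, and minimality of $w$ among upper bounds in $\overline{M}$ translates back through the order-embedding to show $x=\sup_M x_n$. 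Thus every bounded increasing sequence has a supremum in $M$.

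Next I would prove the converse: assume $M\in\mathcal{C}$ and show $\iota$ is hereditary. Take $w\in\overline{M}$ and $y\in M$ with $w\leq\iota(y)$; I must produce $x\in M$ with $w=\iota(x)$. By Proposition~\ref{rapidament} write $w=\sup\iota(w_n)$ for a rapidly increasing sequence $(w_n)$ in $M$. Each $\iota(w_n)\leq w\leq\iota(y)$, so since $\iota$ is an order-embedding, $w_n\leq y$ for all $n$; hence $(w_n)$ is a bounded increasing sequence in $M$. Because $M$ is an object of $\mathcal{C}$, the supremum $x=\sup_M w_n$ exists in $M$. It then remains to check $\iota(x)=w$. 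Here I would use the computation in the proof of Theorem~\ref{thm:completion}: for the increasing sequence $(w_n)$ with supremum $x$ in $M$, one has $\iota(x)=\sup\iota(w_n)=w$, which is exactly the content established there (that $\iota$ preserves existing suprema of increasing sequences). This yields $w=\iota(x)\in\iota(M)$, so $\iota$ is hereditary.

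The main obstacle I anticipate is the verification, in each direction, that the supremum produced in $M$ genuinely maps under $\iota$ to the supremum already present in $\overline{M}$ — i.e.\ that the two notions of supremum are compatible. In the forward direction the subtle point is that $\mathcal{C}$-membership requires suprema of \emph{all} bounded increasing sequences, and I must ensure the witness $x$ obtained from heredity is actually the least upper bound in $M$ and not merely \emph{an} upper bound; this is handled by the order-embedding property together with minimality of $w$ in $\overline{M}$. In the converse direction the delicate step is passing from $x=\sup_M w_n$ to $\iota(x)=w$, which is not automatic for an arbitrary order-embedding but holds here precisely because of the interval-level identification of $\iota(x)$ with $\sup\iota(w_n)$ established inside the proof of Theorem~\ref{thm:completion}. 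Once that compatibility is secured, both implications close routinely.
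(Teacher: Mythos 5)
Your proof is correct and follows essentially the same route as the paper: in the forward direction you pull the supremum $w=\sup\iota(x_n)\leq\iota(b)$ back through heredity and use the order-embedding to verify leastness in $M$, and in the converse you bound the rapidly increasing approximants, take their supremum in $M$, and use that $\iota$ preserves existing suprema (from Theorem~\ref{thm:completion}) to conclude $\iota(x)=w$. The only superfluous step is the appeal to Lemma~\ref{lem:hereditary} in the forward direction, which you immediately replace with the correct direct minimality argument.
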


\begin{proof} Suppose the inclusion $\iota\colon M\to \overline{M}$ is hereditary and consider $(x_n)$ a bounded ascending sequence in $M$, say $x_n\leq y\in M$ for
all $n\geq 1$.  Then, since $\iota$ is a map in PreCu, we obtain $\iota(x_n)\leq \iota (y)$ for all $n\geq 1$. Now, since $\overline{M}$ is in Cu, we have  $z=\sup_{\overline{M}}(\iota(x_n))\leq \iota(y)$. Using that $\iota$ is
hereditary there exists an element $x\in M$ such that $\iota(x) = z$. Suppose there exists $x'\in M$ such that $x_n\leq x'$ for all $n$. Then $\iota(x_n)\leq \iota(x')$ which implies $\iota(x)=z\leq \iota(x')$. But since $\iota$ is an order-embedding we have $x\leq x'$ and therefore 
$x=\sup_M (x_n)$.

Now suppose $M$ is a monoid in $\mathcal C$ and consider the map $\iota \colon M\to \overline{M}$. 
Since $(\overline{M},\iota)$ is a completion of $M$, any $x\in \overline{M}$  can be written as $x=\sup_{\overline{M}}(\iota(x_n))$. Suppose further that 
$x\leq \iota(y)$ for some $y\in M$. Therefore $\iota(x_n)\leq \iota(y)$ for all $n\geq 1$, and since $\iota$ is an order embedding,
we obtain $x_n\leq y$ for all $n\geq 1$. Now since $M$  is in $\mathcal C$ and the sequence is
bounded by $y$, there exists $z=\sup_{M}(x_n)$ and since $\iota$ preserves suprema we obtain $\iota(z)=\iota(\sup_M(x_n))=\sup_{\overline{M}}(\iota(x_n)) = x$, obtaining the desired result.
\end{proof}

\begin{remark}
Note that for a C$^*$-algebra $A$, if the embedding $\W(A)\to \W(A\otimes\mathcal{K})$ is hereditary, by Theorem~\ref{thm:wacompletion} $\W(A)$ is an object of PreCu and $\W(A\otimes\mathcal{K})$ is order-isomorphic to $\overline{W(A)}$. Therefore the embedding  $\W(A)\to \overline{\W(A)}$ is also hereditary.
On the other hand, it is not clear that if $\W(A)$ is an object of PreCu and the embedding $\W(A)\to\overline{\W(A)}$ is hereditary then $\W(A)\to\W(A\otimes\mathcal{K})$ is also hereditary
\end{remark}

By Proposition~\ref{prop:heriffC} and the preceding remark, we obtain:

\begin{corollary} If $A$ is a C$^*$-algebra such that $\W(A)$ is hereditary, then $\W(A)$ is an object of $\mathcal C$. 
\end{corollary}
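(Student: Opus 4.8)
The plan is to chain together two facts that are already available in the excerpt: Proposition~\ref{prop:heriffC}, which identifies membership in $\mathcal{C}$ with heredity of the completion map $\iota\colon M\to\overline{M}$, and the content of the preceding remark, which records that for a C$^*$-algebra the two notions of heredity coincide. Concretely, I would proceed as follows.

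First I would unpack the hypothesis. To say that $\W(A)$ is hereditary means precisely (by the definition given after Remark~\ref{rem:Anohereditaria}) that the canonical order-embedding $\iota\colon\W(A)\to\W(A\otimes\mathcal{K})$ is hereditary. The immediate gain from this is Theorem~\ref{thm:wacompletion}: under exactly this assumption, $\W(A)$ is an object of PreCu and the pair $(\W(A\otimes\mathcal{K}),\iota)$ is the completion of $\W(A)$. So I may identify $\W(A\otimes\mathcal{K})$ with $\overline{\W(A)}$ via an order-isomorphism carrying the embedding $\W(A)\to\W(A\otimes\mathcal{K})$ to the canonical embedding $\W(A)\to\overline{\W(A)}$.

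Next I would transport heredity along this identification. Since the isomorphism $\W(A\otimes\mathcal{K})\cong\overline{\W(A)}$ intertwines the two embeddings, and heredity of a map is a purely order-theoretic property preserved under order-isomorphism, the hypothesis that $\W(A)\to\W(A\otimes\mathcal{K})$ is hereditary yields immediately that $\W(A)\to\overline{\W(A)}$ is hereditary as well. This is exactly the implication recorded in the remark preceding the corollary, so I would simply invoke it rather than re-derive it.

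Finally I would apply Proposition~\ref{prop:heriffC} in the reverse direction: heredity of $\iota\colon\W(A)\to\overline{\W(A)}$ is equivalent to $\W(A)$ being an object of $\mathcal{C}$, i.e. to all bounded increasing sequences in $\W(A)$ having a supremum. This closes the argument. I do not expect any serious obstacle here, since all the substantive work has been done earlier; the only point requiring a moment of care is confirming that Theorem~\ref{thm:wacompletion} really identifies $\W(A\otimes\mathcal{K})$ with the abstract completion $\overline{\W(A)}$ compatibly with the embeddings, so that heredity transports faithfully. Once that identification is in hand, the corollary is a formal consequence of Proposition~\ref{prop:heriffC}.
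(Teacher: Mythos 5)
Your argument is exactly the paper's: the corollary is stated there as an immediate consequence of Proposition~\ref{prop:heriffC} together with the preceding remark, which transports heredity of $\W(A)\to\W(A\otimes\mathcal{K})$ to heredity of $\W(A)\to\overline{\W(A)}$ via the identification supplied by Theorem~\ref{thm:wacompletion}. Your write-up just spells out that chain in more detail, and it is correct.
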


\begin{theorem}
The category $\mathcal C$ has countable inductive limits.
\end{theorem}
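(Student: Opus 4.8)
The plan is to construct the inductive limit of a countable sequence $(M_i, f_i)$ in $\mathcal{C}$ explicitly, and then verify the universal property. My approach is to build the limit in two stages: first take the algebraic inductive limit in the category of ordered abelian monoids, and then enlarge it so that bounded increasing sequences acquire suprema, while being careful that I do \emph{not} add suprema of \emph{unbounded} sequences (which is precisely what would force us out of $\mathcal{C}$ into $\mathrm{Cu}$, and is the phenomenon that broke $\mathrm{PreCu}$ in Theorem~\ref{PreCunolim}).

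\medskip

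First I would let $S = \varinjlim (M_i, f_i)$ denote the algebraic inductive limit in ordered abelian monoids, with structure maps $\varphi_i \colon M_i \to S$. This $S$ need not lie in $\mathrm{PreCu}$ (as the counterexample shows, the way-below relation degenerates). The natural candidate for the $\mathcal{C}$-limit is obtained from $S$ by a completion-type procedure adapted to bounded sequences. Concretely, I would consider intervals in $S$ of the form generated by \emph{bounded} rapidly increasing sequences coming through the $\varphi_i$, and introduce the relation $\precsim$ exactly as in Section~\ref{sec:existence}. The key adaptation is to restrict attention to countably generated intervals that are \emph{bounded} in the appropriate sense, so that the resulting quotient $L$ has suprema of bounded increasing sequences but no spurious suprema of unbounded ones. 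I would define $\psi_i \colon M_i \to L$ as the composite of $\varphi_i$ with the embedding $x \mapsto [I(x)]$, and check (using that each $f_i$ and $\varphi_i$ preserves $\ll$ and suprema) that these are maps in $\mathcal{C}$ satisfying $\psi_{i+1} f_i = \psi_i$.

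\medskip

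The heart of the proof is verifying the universal property. Given any object $N$ of $\mathcal{C}$ and compatible maps $g_i \colon M_i \to N$ (with $g_{i+1} f_i = g_i$), I must produce a unique $g \colon L \to N$ in $\mathcal{C}$ with $g \psi_i = g_i$. By the universal property of the algebraic limit $S$, the $g_i$ first factor through a monoid map $\tilde{g} \colon S \to N$; then, since every element of $L$ is represented by a bounded rapidly increasing sequence from $S$, I would define $g$ on a class $[I]$ by taking the supremum in $N$ of the images under $\tilde{g}$ of a rapidly increasing cofinal sequence, mirroring the construction of $\beta$ in Theorem~\ref{universality}. Well-definedness follows by the same interleaving argument used there: two rapidly increasing cofinal sequences for equivalent intervals interleave under $\ll$, which is preserved by $\tilde{g}$. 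The crucial point where $N \in \mathcal{C}$ (rather than merely $\mathrm{Cu}$) is used is that these suprema exist in $N$ precisely because the representing sequences are bounded; this is what makes the whole construction stay inside $\mathcal{C}$ and is what fails for $\mathrm{PreCu}$.

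\medskip

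The main obstacle I anticipate is the well-definedness and boundedness bookkeeping: I must ensure that the class $[I]$ of a bounded interval is sent to an \emph{existing} supremum in $N$, and that this is independent of all the choices (of cofinal sequence, of representative interval, and of the factoring through $S$). Establishing that $g$ preserves $\ll$, suprema of bounded increasing sequences, and is an order-preserving monoid map should then proceed exactly as in the proof of Theorem~\ref{universality}, invoking Lemma~\ref{orderembedding} and the compatibility of $\ll$ with addition. Uniqueness is immediate from the fact that every element of $L$ is a supremum of a bounded rapidly increasing sequence from the $M_i$, so any two $\mathcal{C}$-maps agreeing on the images $\psi_i(M_i)$ must agree everywhere. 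The one genuinely new subtlety compared with Section~\ref{sec:complecio} is confirming that $L$ itself lies in $\mathcal{C}$ and not accidentally in $\mathrm{Cu}$, i.e.\ that unbounded increasing sequences in $L$ are \emph{not} forced to have suprema; this I would verify directly against the counterexample structure, showing that an unbounded rapidly increasing sequence with no upper bound in any $M_i$ yields an interval that is not countably generated in the bounded sense and hence is excluded.
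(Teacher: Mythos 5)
Your overall strategy---restrict attention to \emph{bounded} ascending data so that the required suprema exist in any target $N$ of $\mathcal C$, and verify the universal property by a $\beta$-type argument as in Theorem~\ref{universality}---matches the intent of the paper's proof, and the boundedness condition (a bound in the algebraic limit for the way-below elements) is indeed the key new ingredient. However, the carrier of your construction has a genuine gap: you pass to the algebraic inductive limit $S$ first and then perform an interval completion \emph{of $S$}, using the relation $\precsim$ of Section~\ref{sec:existence}, which is defined in terms of the way-below relation of the ambient monoid, here $\ll_S$. The relation $\ll_S$ does not remember the way-below relations of the $S_i$, and this is exactly the degeneracy exhibited by the counterexample of Theorem~\ref{PreCunolim}: for $S_i=\tfrac{1}{2^i}\N$ one has $1\ll 1$ in every $S_i$, yet in $S=\bigcup_i\tfrac{1}{2^i}\N$ the element $1$ is the supremum of $(1-\tfrac{1}{2^n})$, so $z\ll_S 1$ forces $z<1$ and, conversely, $z<y$ in $S$ implies $z\ll_S y$. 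Consequently the interval $[0,1]$ and the interval generated by $(1-\tfrac{1}{2^n})$ become $\precsim$-equivalent, i.e.\ your limit $L$ identifies $\psi(1)$ with $\sup\psi(1-\tfrac{1}{2^n})$. But the object $T_1$ of Theorem~\ref{PreCunolim} receives compatible maps in $\mathcal C$ and keeps these two elements distinct ($1$ versus $1'$), so no factorization $g$ with $g\psi_i=i_1\varphi_i$ can exist: your candidate fails the universal property. A related symptom is that the map $\tilde g\colon S\to N$ obtained from the universal property of the algebraic limit is merely an ordered monoid map, with no reason to respect $\ll_S$, so the interleaving argument for well-definedness of $g$ does not transfer.

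The repair, which is what the paper does, is never to use $S$ as the ambient ordered monoid. One works with ascending sequences $(s_i)_i$ with $s_i\in S_i$ through the system, as in the Coward--Elliott--Ivanescu construction of inductive limits in Cu, using the way-below relation \emph{of each $S_i$} both in the boundedness requirement (there is $M_s$ in the algebraic limit with $\varphi_i(x)\leq M_s$ whenever $x\ll s_i$ in $S_i$) and in the comparison relation (for $x\ll s_i$ in $S_i$ there is $m$ with $f_{m,i}(x)\ll t_m$ in $S_m$); the algebraic limit serves only as bookkeeping for the bounds. With that change of carrier, your boundedness analysis (bounds add; bounds pass to $\precsim$-smaller sequences and hence to suprema of bounded sequences) and your verification of the universal property go through essentially as you outline.
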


\begin{proof} 

Let $(S_i,f_i)_{i\geq 0}$ be an inductive sequence of monoids in $\mathcal C$ and let $S^{\text{alg}}$ be the algebraic inductive limit with compatible maps $\varphi_i:S_i\to S^{\text{alg}}$. Also, we define for any $m>i\geq 0$ the maps $f_{m,i}=f_{m-1}\dots f_i$.

To construct the inductive limit in $\Cu$, Coward, Elliott and Ivanescu considered the set of ascending sequences (through 
the morphisms $f_i\!:\!S_i\to S_{i+1}$) with an intertwining relation between the compactly contained elements of the sequence. 
We will use the same construction but, since we are only interested in obtaining suprema in $S$ for bounded sequences, we should only be interested in sequences which are bounded in some of the $S_i$, and its successive homomorphic images, that is, in $S^{\text{alg}}$. But, in order to maintain
the rapidly increasing structure, we will consider ascending sequences with a  bound in $S^{\text{alg}}$ for its compactly contained elements.

Let us call a sequence $s=(s_1,s_2,\dots )$ with $s_i\in S_i$ a \emph{bounded ascending sequence} in $(S_i,f_i)$ if 
$f_i(s_i)\leq s_{i+1}$ and there exists $M_s\in S^{\text{alg}}$ such that, for all $i\geq 0$  and $x\ll s_i$, $\varphi_i(x)\leq M_s$. We will say that $M_s$ is the \emph{bound in} $S^{\text{alg}}$ for $s$, or that $x$ is bounded in $S^{\text{alg}}$.
We now define 
\[ S^{(0)}:=\{ s=(s_1,s_2,\dots )  \mid s \text{ is bounded}\}.\]
This set becomes a pre-ordered abelian monoid with  componentwise addition and pre-order relation given by $(s_1,s_2,\dots)\precsim (t_1,t_2,\dots)$ if, for any $i$ and $s\in S_i$ with $s\ll s_i$, there exists an $m>i$ such that $f_{m,i}(s)\ll t_m$. 
Antisymmetrizing the relation $\precsim$ ($(s_i)\sim (t_i)$ if $(s_i)\precsim (t_i)$ and $(t_i)\precsim (s_i)$), we obtain an ordered abelian monoid  $S=S^{(0)}/\sim$ which, together with the morphisms $\varphi_i\!:\!S_i\to S$, $\varphi_i(s)=[(0,\dots,0,s,f_i(s),\dots)]$,
is the inductive limit of $(S_i,f_i)$ in $\mathcal C$.

This construction is based on the construction in \cite{CEI} with the difference that we are considering 
a wider range of monoids (possibly with unbounded sequences), but a smaller subset $S^{(0)}$ (subset of the cartesian product $\prod_{i}S_i$). The proof follows the lines of the one in \cite{CEI} but with a number of non trivial modifications. 
For the sake of brevity we will point out where extra care in the construction needs to be taken.
%Notably, whenever new increasing sequences are considered, we have to check that they are in $S^{(0)}$. Let us point out the most 
%relevant cases:

First, we need to check that $S^{(0)}$ is closed under addition.

Let $(s_1,s_2,\dots), (t_1,t_2,\dots)\in S^{(0)}$ with bounds in $S^{alg}$ $M_s,M_t$ respectively. For any $i\geq 0$ and $x\ll s_i+t_i$, let us write $s_i,t_i$ as suprema of rapidly increasing sequences in $S_i$, $s_i=\sup_{S_i}(s_i^n)_n$, and $t_i=\sup_{S_i}(t_i^n)_n$. Then $x\ll s_i+t_i=\sup_{S_i}(s_i^n+t_i^n)_n$. Hence, for some $m$ 
we have that $x\ll s_i^m+t_i^m$. But $s_i^m\ll s_i$ and $t_i^m\ll t_i$ implies that $\varphi_i(s_i^m)\leq M_s$ and $\varphi_i(t_i^m)\leq M_t$. Therefore $\varphi_i(x)\leq \varphi_i(s_i^m+t_i^m)\leq M_s+M_t\in S^{\text{alg}}$ which is a bound in $S^{\text{alg}}$ for $(s_i)+(t_i)$ . 

One other important fact is that each element $[(s_i)]\in S$ has a representative 
$[(\tilde s_i)]$ whose elements are rapidly increasing in the sense that $f_i(\tilde s_i)\ll \tilde s_{i+1}$. This can still be done, since 
this representative is a Cantor diagonal sequence obtained from rapidly increasing sequences $s_i^n$ with the original elements $s_i$ as suprema. Since those elements are always way below the original ones, the resulting representative is also bounded in $S^{\text{alg}}$. 

Also, we should take care in how suprema is constructed. 

First we see that if $(s_i)\precsim (t_i)$ and $M_t\in S^{\text{alg}}$ is a bound for $(t_i)$, then $M_t$ is also a bound for $(s_i)$.
If $x\ll s_i$, by the $\precsim$ relation, there exists $m$ such that $f_{m,i}(x)\ll t_m$. Now, by the bound for the compactly contained in $t_m$, $\varphi_m(f_{m,i}(x))\leq M_t$. But since the $f_j$ are compatible maps, we obtain $\varphi_i(x)\leq M_t$.

Therefore, a bound in $S$ for an ascending sequence $s^i$ gives us a bound in $S^{alg}$  for all the elements in the sequence, and thus 
for the computed supremum: Let $[(s_i^1)]\leq [(s_i^2)]\leq\dots \leq [(t_i)]$ be a bounded ascending sequence in $S$. By the above argument the compactly contained elements of all the $s_i^j$ (even for any other representative), are bounded in $S^{\text{alg}}$
by the bound of the $(t_i)$, say $M_t$. 
But the supremum (as constructed in \cite{CEI}) is obtained from the components of the $[(s_i^n)]$ (rapidly increasing representatives),
which will be bounded in $S^{\text{alg}}$ by $M_t$, and therefore will also led to an element in $S^{(0)}$.
\end{proof}

\begin{remark}
{\rm Let us recall a property of the inductive limit in $\Cu$, which is still valid in $\mathcal C$  and which should be used later. As in the proof of the previous Theorem, given $s\in S$, choosing a rapidly increasing representative, $s=[(s_i)]\in S$ with $f_i(s_i)\ll s_{i+1}$,
we have that $s=\sup_{S}(\varphi_i(s_i))_i$,
that is, all elements in $S$ can be written as the supremum of a sequence of elements coming from the $S_i$.}
\end{remark}

\begin{remark}
{\rm Observe that the inductive sequence in Theorem~\ref{PreCunolim} is in fact a sequence in $\mathcal C$.
To compute the inductive limit $\lim_{\rightarrow\mathcal C}(S_i,f_i)$, just observe that $T=S\sqcup \mathbb{R}^{++}$ 
with order and addition as $T_1$ in the proof of Theorem~\ref{PreCunolim}, has the desired properties.

If, instead of $(S_i,f_i)$  we had considered the sequence $(S_i\cup\{\infty\},\bar f_i)$ (now objects in $\Cu$),
we would have obtained $\lim_{\rightarrow\mathcal C}(S_i,f_i)=S\sqcup \R^{++}\cup\{\infty\}$ which is the inductive limit in $\Cu$.

One could ask whether or not, the limit in $\mathcal C$  applied to monoids already in $\Cu$ leads to the inductive limit  in $\Cu$, that is,
if $S_i$ are monoids in $\Cu$, \[\lim_{\rightarrow \mathcal C}S_i\stackrel{?}{=}\lim_{\rightarrow \Cu}S_i.\]
This is not the case as we see in the following example:

For all $n\geq 0$ let $T_n=\{a_0,a_1,\dots,a_n\}$ with addition $a_i+a_j=a_{\text{max\{i,j\}}}$ and the induced algebraic order. 
It is not difficult to check that $T_n\in\Cu$, and now, if we consider the inductive sequence $(T_n,g_n)$ with the natural inclusions as maps,
it can be proven, through its defining universal properties, that 
\[  \lim_{\rightarrow \mathcal C}S_i=\{a_0,a_1,a_2,\dots\},\text{ and }  \lim_{\rightarrow \Cu}S_i=\{a_0,a_1,a_2,\dots,a_{\infty}\}.\]
equipped with the same order and addition as before.}
\end{remark}

\begin{theorem}\label{thm:limitsac}
Let $(S_i,f_i)$  be an inductive sequence of maps in $\mathcal C$. Then 
\[ \overline{ \lim_{\rightarrow \mathcal C} S_i}= \lim_{\rightarrow{\mathcal C}u} \overline{S_i}.\]
\end{theorem}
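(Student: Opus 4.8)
The plan is to deduce the statement from the universal properties, exploiting that completion behaves as a left adjoint. Write $S=\lim_{\rightarrow\mathcal C}S_i$, with structure maps $\varphi_i\colon S_i\to S$, for the inductive limit furnished by the preceding theorem, and let $(\overline S,\iota)$ be its completion. On the other side, since $\Cu$ admits countable inductive limits by \cite{CEI}, let $(P,\psi_i)$ denote $\lim_{\rightarrow\Cu}\overline{S_i}$, where the connecting maps of this diagram are the functorial completions $\overline{f_i}\colon\overline{S_i}\to\overline{S_{i+1}}$ (so that $\overline{f_i}\circ\iota_{S_i}=\iota_{S_{i+1}}\circ f_i$) and $\psi_i$ are the canonical $\Cu$-maps. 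I would prove $\overline S\cong P$ by exhibiting mutually inverse $\Cu$-morphisms and invoking uniqueness. Conceptually all of this is just the assertion that $C|_{\mathcal C}\colon\mathcal C\to\Cu$ is left adjoint to the inclusion $\Cu\hookrightarrow\mathcal C$ --- immediate from Theorem~\ref{universality} and fullness of $\mathcal C$ in PreCu --- together with the fact that left adjoints preserve colimits; but I would spell it out by hand.

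For the map $P\to\overline S$, note that each composite $\iota\circ\varphi_i\colon S_i\to\overline S$ is a PreCu-map whose target lies in $\Cu$, so Theorem~\ref{universality} yields a unique $\beta_i\colon\overline{S_i}\to\overline S$ in $\Cu$ with $\beta_i\circ\iota_{S_i}=\iota\circ\varphi_i$. Using $\varphi_{i+1}\circ f_i=\varphi_i$ and $\overline{f_i}\circ\iota_{S_i}=\iota_{S_{i+1}}\circ f_i$, one checks that both $\beta_{i+1}\circ\overline{f_i}$ and $\beta_i$ agree with $\iota\circ\varphi_i$ after precomposition with $\iota_{S_i}$; the uniqueness clause of Theorem~\ref{universality} forces $\beta_{i+1}\circ\overline{f_i}=\beta_i$. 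Thus the $\beta_i$ form a cocone over $(\overline{S_i},\overline{f_i})$, and the universal property of $P$ provides a unique $\Cu$-map $b\colon P\to\overline S$ with $b\circ\psi_i=\beta_i$.

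For the map $\overline S\to P$, observe that the composites $\psi_i\circ\iota_{S_i}\colon S_i\to P$ are $\mathcal C$-maps (as $P\in\Cu\subseteq\mathcal C$) and are compatible with the $f_i$, since $\psi_{i+1}\circ\iota_{S_{i+1}}\circ f_i=\psi_{i+1}\circ\overline{f_i}\circ\iota_{S_i}=\psi_i\circ\iota_{S_i}$. The universal property of $S$ \emph{in $\mathcal C$} then gives a unique $\mathcal C$-map $\gamma\colon S\to P$ with $\gamma\circ\varphi_i=\psi_i\circ\iota_{S_i}$, and, as $P\in\Cu$, Theorem~\ref{universality} lifts $\gamma$ to a unique $\Cu$-map $a\colon\overline S\to P$ with $a\circ\iota=\gamma$. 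To see $a$ and $b$ are mutually inverse, I would first compute $b\circ\gamma\circ\varphi_i=b\circ\psi_i\circ\iota_{S_i}=\beta_i\circ\iota_{S_i}=\iota\circ\varphi_i$, so that uniqueness in the $\mathcal C$-limit property of $S$ gives $b\circ\gamma=\iota$, hence $(b\circ a)\circ\iota=\iota$; since every element of $\overline S$ is a supremum of a rapidly increasing sequence from $S$ (Definition~\ref{dfn:completion}(iii)), uniqueness in Theorem~\ref{universality} identifies $b\circ a$ with $\mathrm{id}_{\overline S}$. Symmetrically, $a\circ\beta_i\circ\iota_{S_i}=a\circ\iota\circ\varphi_i=\gamma\circ\varphi_i=\psi_i\circ\iota_{S_i}$ gives $a\circ\beta_i=\psi_i$, so $(a\circ b)\circ\psi_i=\psi_i$ and uniqueness in the universal property of $P$ yields $a\circ b=\mathrm{id}_P$.

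The main point requiring care is not any individual computation but the bookkeeping across the three nested categories $\Cu\subseteq\mathcal C\subseteq\mathrm{PreCu}$: one must keep track of which of the three universal properties applies at each step --- the $\mathcal C$-limit property of $S$, the completion property of Theorem~\ref{universality}, or the $\Cu$-limit property of $P$ --- and verify that each map produced genuinely lives in the category whose universal property is being invoked. The existence of all the limits involved is already guaranteed (by the preceding theorem for $\mathcal C$ and by \cite{CEI} for $\Cu$), so no further construction is needed; the argument is purely a matching of universal properties.
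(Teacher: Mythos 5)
Your argument is correct, but it certifies the isomorphism by a genuinely different mechanism than the paper. The paper also begins by producing the canonical map $\gamma\colon S_{\mathcal C}\to S_{\Cu}$ from the universal property of the $\mathcal C$-limit, but it then descends to the explicit sequence models of both limits: it chases the relation $\precsim$ on rapidly increasing representatives to show directly that $\gamma$ is an order-embedding, checks that every element of $S_{\Cu}$ is a supremum of a rapidly increasing sequence from $\gamma(S_{\mathcal C})$, and concludes that $(S_{\Cu},\gamma)$ is itself a completion of $S_{\mathcal C}$, whence the result follows from uniqueness of completions. You instead never touch the representatives: you build the inverse map $b$ by hand from the three universal properties and obtain the order-embedding property of $\gamma$ (equivalently of $a$) for free from the existence of a two-sided inverse --- in effect the observation that completion is left adjoint to the inclusion $\Cu\hookrightarrow\mathcal C$ and left adjoints preserve colimits. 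Your route is shorter and more conceptual, and your bookkeeping of which universal property is invoked at each step (the $\mathcal C$-limit, Theorem~\ref{universality}, the $\Cu$-limit) is sound; each composite you form is a PreCu-map between objects of the relevant subcategory, and fullness makes it a morphism there. What the paper's concrete argument buys in exchange is an explicit description of the isomorphism on representatives, and --- perhaps more to the point --- independence from the full strength of the universal property of the $\mathcal C$-limit, whose verification in the preceding theorem is only sketched (``the proof follows the lines of \cite{CEI}''); your proof leans on that universal property at two essential places (constructing $\gamma$ and proving $b\circ\gamma=\iota$), so it is only as solid as that earlier claim. Granting the preceding theorem as stated, your proof is complete.
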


\begin{proof}
Let $(S_i,f_i)$ be an inductive sequence of maps in $\mathcal C$, and $S_{\mathcal C}$ the inductive limit in $\mathcal C$ with compatible maps $\varphi_i\!:\!S_i\to S_{\mathcal C}$.
Similarly let $(\overline{S_i}, \overline{f_i})$ be the induced sequence in $\Cu$, and $S_{\Cu}$ the inductive limit in $\Cu$ with
compatible maps $\psi_i\!:\overline S_i\to S_{\Cu}$. Let $\gamma_i\!:\! S_i\to \overline S_i$ be the corresponding inclusions.
Now, since $\gamma_{i+1}f_i=\overline{f_i}\gamma_i$ (by definition as in Theorem~\ref{universality}) consider the following diagram of compatible maps in $\mathcal C$:
\[
\xymatrix{
& & S_{\mathcal C}\ar@{-->}[ddd]^{\exists!\gamma} \\
S_i\ar[r]^{f_i}\ar@/^/[urr]^{\varphi_i}\ar[d]_{\gamma_i} & S_{i+1}\ar[ur]_{\varphi_{i+1}}\ar[d]^{\gamma_{i+1}} & \\
\overline{S_i}\ar[r]^{\overline{f_i}}\ar@/_/[drr]_{\psi_i} & \overline{S_{i+1}}\ar[dr]^{\psi_{i+1}} & \\
& & S_{\Cu}
}
\]
By the universal property of $S_{\mathcal C}$ there exists a unique map $\gamma\!:\!S_{\mathcal C}\to S_{\Cu}$ such that $\gamma\varphi_i=\psi_i\gamma_i$.

If $s\in S_{\Cu}$, then $s$ can be written as a supremum of a rapidly increasing sequence of elements coming from the $\overline S_i$ (see \cite{CEI}), 
$s=\sup_{S_{\Cu}}(\psi_i(s_i))_i$. In turn, since $\overline{S_i}$ is the completion of $S_i$, each of the $s_i$ can be written as a supremum of a rapidly increasing sequence of elements coming from $S_i$, $s_i=\sup_{\overline S_i}(\gamma_i(s_i^j))$. Therefore, since $\psi_i$ are morphisms in $\Cu$ preserving suprema,
\begin{multline*}s=\sup_{S_{\Cu}}(\psi_i(s_i))_i=\sup_{S_{\Cu}}(\psi_i(\sup_{\overline S_i}(\gamma_i(s_i^j))_j))_i=
\sup_{S_{\Cu}}(\sup_{S_{\Cu}}(\psi_i(\gamma_i(s_i^j)))_j)_i=\sup_{S_{\Cu}}(\gamma(\varphi_i(s_i^j)))_{i,j},
\end{multline*}
and we see that each element in $S_{\Cu}$ can be written as the supremum of elements in  $\gamma(S_{\mathcal C})$.

Now let us prove that $\gamma$ is an order embedding.

Suppose $\gamma(s)\leq \gamma(t)$ in $S_{\mathcal C}$ and let $s=[(s_i)], t=[(t_i)]$ with rapidly increasing representatives as
in the construction of $S_{\mathcal C}$.
Then, recall that in this situation $s=\sup_{S_{\mathcal C}}(\varphi_i(s_i))_i$ and $t=\sup_{S_{\mathcal C}}(\varphi_i(t_i))_i$. Since $\gamma$ is
a $\mathcal C$ map and $\gamma\varphi_i=\psi_i\gamma_i$, we obtain
\[\sup_{S_{\Cu}}(\psi_i\gamma_i(s_i))_i\leq \sup_{S_{\Cu}}(\psi_i\gamma_i(t_i))_i.\]
Similarly as before, in $S_{\Cu}$ as constructed in \cite{CEI}, this is $[(\gamma_i(s_i))]\leq [(\gamma_i(t_i))]$.

Hence, given $x\ll s_i\in S_i$ we have $\gamma_i(x)\ll \gamma_i(s_i)$. By the order relation in $S_{\Cu}$ there exists $m\geq i$ such that 
$\overline{f_{m,i}}\gamma_i(x)\ll \gamma_m(t_m)$ and therefore $\gamma_m(f_{m,i}(x))\ll \gamma_m(t_m)$. But by definition $\gamma_m$ is an order embedding in PreCu,
which implies $f_{m,i}(x)\ll t_m$. But this, by the order relation in $S_{\mathcal C}$ implies $[(s_i)]\leq [(t_i)]$ or $s\leq t$. Hence
$\gamma$ is an order embedding, $(S_{\Cu},\gamma)$ is a completion of $S_{\mathcal C}$,  and by Theorem~\ref{universality} we have  $S_{\Cu}\cong \overline S_{\mathcal C}$.
\end{proof}

As a consequence, we can now compute the stabilized Cuntz semigroup for some countable
inductive limits of $C^*$-algebras in terms of the Classical Cuntz semigroup.

\begin{corollary} Let $A$ be a C$^*$-algebra such that $A=\lim_{\rightarrow}(A_i,f_i)$ where $\W(A_i)$ are hereditary. Then 
\[   \Cu (A)=\overline{\lim_{\rightarrow\mathcal C}(\W(A_i),\W(f_i))}.\]
\end{corollary}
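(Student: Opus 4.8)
The plan is to reduce the statement to Theorem~\ref{thm:limitsac}, which interchanges completion with inductive limits, together with the (countable) continuity of the functor $\Cu(-)$ established in \cite{CEI}. Throughout I identify $\Cu(B)=\W(B\otimes\mathcal K)$ for every C$^*$-algebra $B$, and write $\iota_i\colon\W(A_i)\to\W(A_i\otimes\mathcal K)$ for the canonical order-embeddings. First I would record the two basic consequences of the hypothesis that each $\W(A_i)$ is hereditary: by the Corollary immediately preceding this statement, each $\W(A_i)$ is an object of $\mathcal C$; and by Theorem~\ref{thm:wacompletion}, $(\W(A_i\otimes\mathcal K),\iota_i)$ is the completion of $\W(A_i)$, so that $\overline{\W(A_i)}\cong\W(A_i\otimes\mathcal K)=\Cu(A_i)$.

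Next I would verify that $(\W(A_i),\W(f_i))$ is genuinely an inductive sequence \emph{in} $\mathcal C$, i.e. that each induced map $\W(f_i)$ is a map in $\mathcal C$ (equivalently, in PreCu, since $\mathcal C$ is full). The point is that $\W(f_i)$ is the restriction of the $\Cu$-map $\Cu(f_i)=\W(f_i\otimes\mathrm{id}_{\mathcal K})$ along the hereditary embeddings, so that the naturality relation $\Cu(f_i)\circ\iota_i=\iota_{i+1}\circ\W(f_i)$ holds. Using that $\iota_i,\iota_{i+1}$ are PreCu order-embeddings (Proposition~\ref{prop:wainprecu}), that suprema in the hereditary $\W(A_i)$ agree with those computed in $\W(A_i\otimes\mathcal K)$ (Lemma~\ref{lem:hereditary}), and that $\Cu(f_i)$ preserves $\ll$ and suprema, one transfers these properties back to $\W(f_i)$ via Lemma~\ref{orderembedding} and Lemma~\ref{lem:hereditary}. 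Granting this, I can apply Theorem~\ref{thm:limitsac} and then continuity of $\Cu(-)$ to $A=\lim_{\rightarrow}(A_i,f_i)$ to obtain
\[
\overline{\lim_{\rightarrow\mathcal C}(\W(A_i),\W(f_i))}\;\cong\;\lim_{\rightarrow\Cu}\bigl(\overline{\W(A_i)},\overline{\W(f_i)}\bigr)\;\cong\;\lim_{\rightarrow\Cu}\bigl(\Cu(A_i),\Cu(f_i)\bigr)\;\cong\;\Cu\bigl(\lim_{\rightarrow}A_i\bigr)=\Cu(A),
\]
which is the desired conclusion.

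The step I expect to be the main obstacle is the middle isomorphism above, namely the identification of the inductive system $(\overline{\W(A_i)},\overline{\W(f_i)})$ with $(\Cu(A_i),\Cu(f_i))$: I must check not only that the objects agree, but that the \emph{connecting maps} agree, i.e. that under the isomorphism $\overline{\W(A_i)}\cong\Cu(A_i)$ of Theorem~\ref{thm:wacompletion} the completion map $\overline{\W(f_i)}$ corresponds to $\Cu(f_i)$. This is where the coherence must be pinned down: both $\overline{\W(f_i)}$ and $\Cu(f_i)$ are maps in $\Cu$ satisfying the same intertwining relation $(-)\circ\iota_i=\iota_{i+1}\circ\W(f_i)$, so the uniqueness clause of the universal property in Theorem~\ref{universality} forces them to coincide. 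Once this naturality is secured, the isomorphism of inductive systems, and hence of their limits, follows and the chain of identifications above completes the proof.
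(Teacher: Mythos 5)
Your proof is correct and follows essentially the same route as the paper: the paper's own argument is exactly the chain $\Cu(A)=\lim_{\rightarrow\Cu}(\Cu(A_i),\Cu(f_i))=\lim_{\rightarrow\Cu}(\overline{\W(A_i)},\overline{\W(f_i)})=\overline{\lim_{\rightarrow\mathcal C}(\W(A_i),\W(f_i))}$, citing the continuity of $\Cu(-)$ from \cite{CEI}, the uniqueness clause of Theorem~\ref{universality} (to identify $\overline{\W(f_i)}$ with $\Cu(f_i)$, your ``main obstacle''), and Theorem~\ref{thm:limitsac}. Your additional check that the $\W(f_i)$ are genuinely maps in $\mathcal C$ is a point the paper leaves implicit, and is a welcome piece of diligence rather than a divergence.
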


\begin{proof}

Using \cite[Theorem, 2]{CEI}, Theorem~\ref{universality} and Theorem~\ref{thm:limitsac} we obtain

\[ \Cu (A)=\lim_{\rightarrow\Cu}(\Cu(A_i),\Cu(f_i))=\lim_{\rightarrow\Cu}\left(\overline{\W(A_i)},\overline{\W(f_i)}\right)=
\overline{\lim_{\rightarrow\mathcal C}(\W(A_i),\W(f_i))}.\]

\end{proof}

%For example, but that's an easy one, if $A$ is the $2^\infty$ UHF-Algebra, let us denote by $S$ 
%the positive dyadic rationals (which is in fact $K_0(A)$), and we have 
%\[\Cu(A)=\overline{\lim_{\rightarrow\mathcal C}\left (\tfrac{1}{2^i}\N,f_i\right)}=\overline{S\sqcup\R^{++}}=S\sqcup \R^{++}\cup\{\infty\}.\]
%with the ``standard'' order and addition.


\begin{thebibliography}{99}
%\bibliographystyle{plain}
\bibitem{aptsantander} P. Ara, F. Perera, and A. S. Toms, \emph{K-Theory for operator algebras. Classification of C$^*$-algebras}, arXiv preprint: 0902.3381.
\bibitem{blac} B. Blackadar, \emph{Comparison theory for simple C$^*$-algebras}. London Math. Soc. Lecture Note Ser. \textrm{138}, Cambridge Univ. Press (1989), pp. 21--54.
\bibitem{betal} B. Blackadar, L. Robert, A. Tikuisis, A. S. Toms, and W. Winter, in preparation.
\bibitem{bt} N. P. Brown and A. S. Toms, \emph{Three applications of the Cuntz semigroup}, Int. Math. Res. Notices. (2007), Vol. 2007, article ID rnm068, 14 pages.
\bibitem{bpt} N. P. Brown, F. Perera, and A. S. Toms, \emph{The Cuntz semigroup, the Elliott conjecture, and dimension functions on {\rm C}$^*$-algebras}, J. reine angew. Math. \textbf{621} (2008), pp. 191--211.
\bibitem{CEI} K.T. Coward, G.A. Elliott and C. Ivanescu. \emph{The Cuntz Semigroup as an invariant for C$^*$-algebras}. J. die reine angew. Math.  \textbf{623} (2008), pp. 161--193.
\bibitem{cu} J. Cuntz, \emph{Dimension functions on simple {\rm C}$^*$-algebras}, Math. Ann. \textbf{233} (1978), 145--153.
\bibitem{dad.toms} M. Dadarlat and A. S. Toms, \emph{Ranks of operators in simple C$^*$-algebras}, arXiv preprint: 0912.0675.
\bibitem{poag} K. R. Goodearl, \emph{Partially Ordered abelian Groups with Interpolation}, Math. Surveys and Monographs \textbf{20}, Amer. Math. Soc., Providence, 1986.
\bibitem{jiangsu} X. Jiang, H. Su, \emph{On a simple unital projectionless {\rm C}$^{*}$-algebra}, Amer.\ J.\ Math.\ {\bf 121} (1999), pp. 359--413.
\bibitem{kiroramer} E. Kirchberg and M. R\o rdam, \emph{Non-simple purely infinite C*-algebras}. Amer. J. Math. \textbf{122} (2000), pp. 637--666
\bibitem{kiror} E. Kirchberg, M. R\o rdam, \emph{Infinite non-simple {\rm C}$^*$-algebras: Absorbing the Cuntz algebra $\mathcal{O}_{\infty}$}, Adv. Math. \textbf{167} (2002), pp. 195--264.
\bibitem{kngper} D. Kucerovsky, P. W. Ng, and F. Perera,  \emph{Purely infinite corona algebras of simple C$^*$-algebras}, Math. Ann., \textbf{346} (2010), pp. 23--40.
\bibitem{perijm} F. Perera, \emph{The structure of positive elements for C$^*$-algebras with real rank zero}, International J. Math., \textbf{8} (1997), pp. 383--405. 
\bibitem{pt} F. Perera and A. S. Toms, \emph{Recasting the Elliott Conjecture}, Math. Ann., \textbf{338} (2007), pp. 669--702.
\bibitem{rorfunct} M. R\o rdam, \emph{On the structure of simple {\rm C}$^*$-algebras tensored with a UHF-algebra. II}, J. Funct. Anal., \textbf{107} (1992), pp. 255--269.
\bibitem{rorijm} M. R\o rdam, \emph{The stable and the real rank of $\mathrm{Z}$-absorbing {\rm C}$^*$-algebras}, International J. Math. \textbf{15} (2004), pp. 1065--1084.
\bibitem{scott} D.S. Scott, \emph{Continuous Lattices},  Lecture  notes in math.  \textbf{274} (1972), Springer-Verlag,  pp. 97--136.
\bibitem{tomsjfa} A.S. Toms, \emph{Flat dimension growth for C$^*$-algebras}, J. Funct. Anal.  \textbf{238}  (2006), pp. 678--708.
\bibitem{tomsann} A.S. Toms, \emph{On the classification problem for nuclear C$^*$-algebras}. Ann. Math. (2), \textbf{167} (2008), no. 3, pp. 1029--1044.
\bibitem{tomsplms} A.S. Toms, \emph{Stability in the Cuntz semigroup of a commutative C$^*$-algebra}, Proc. Lond. Math. Soc., \textbf{96}  (2008), pp. 1--25.

\end{thebibliography}
\end{document}